\def\Z{\mathbb{Z}}
\def\N{\mathbb{N}}
\def\F{\mathbb{F}}
\begin{document}
\title[Lower bounds on the class number]{Lower bounds on the class number of 
algebraic function fields defined over any finite field}
\author{S. Ballet}
\address{Institut de Math\'{e}matiques
de Luminy\\ case 930, F13288 Marseille cedex 9\\ France}
\email{ballet@iml.univ-mrs.fr}

\author{R. Rolland}
\address{Institut de Math\'{e}matiques
de Luminy\\ case 930, F13288 Marseille cedex 9\\ France}
\email{robert.rolland@acrypta.fr}
\date{\today}
\keywords{finite field, function field, non-special divisor}
\subjclass[2000]{Primary 12E20; Secondary 14H05}

\newtheorem{theoreme}{Theorem}[section]
\newtheorem{lemme}[theoreme]{Lemma}
\newtheorem{proposition}[theoreme]{Proposition}
\newtheorem{corollaire}[theoreme]{Corollary}
\theoremstyle{definition}
\newtheorem{conclusion}[theoreme]{Conclusion}
\newtheorem{definition}[theoreme]{Definition}
\newtheorem{remarque}[theoreme]{Remark}
\newtheorem{exemple}[theoreme]{Example}
\renewcommand{\theequation}{\arabic{equation}}
\setcounter{equation}{0}
\begin{abstract}
We give lower bounds on the number of effective divisors of degree $\leq g-1$ 
with respect to the number of places of certain degrees of an
algebraic function field of genus $g$ defined over a finite field. 
We deduce lower bounds and asymptotics for the class number, 
depending mainly on the number of places of a certain degree. 
We give examples of towers of algebraic function fields having 
a large class number. 
\end{abstract}

\maketitle

\section{Introduction}
\label{sec:intro}

\subsection{General context}\label{subsec:gc}

We recall that the class number $h(F/{\mathbb{F}}_q)$ of an algebraic function field $F/{\mathbb{F}}_q$  
defined over a finite field ${\mathbb{F}}_q$ is the cardinality of the Picard group of $F/{\mathbb{F}}_q$.
 This numerical invariant corresponds to the number of ${\mathbb{F}}_q$-rational 
points of the Jacobian of any curve $X({\mathbb{F}}_q)$  
having $F/{\mathbb{F}}_q$ as algebraic function field. 

Estimating the class number of an algebraic function field is a classic problem. 
By the standard estimates deduced from the results of Weil \cite{weil1} \cite{weil2}, we know that 
$$(\sqrt{q}-1)^{2g}\leq h(F/{\mathbb{F}}_q) \leq (\sqrt{q}+1)^{2g},$$
where $g$ is the genus of $F/{\mathbb{F}}_q$. Moreover, these estimates  hold 
for any Abelian variety. 
Finding good estimates for the class number   $h(F/{\mathbb{F}}_q)$ is a difficult problem.
For $g=1$, namely for elliptic curves, the class number ${\mathbb{F}}_q$ is the number of 
${\mathbb{F}}_q$-rational points 
of the curve and this case has been extensively studied. So, from now on we assume that $g\geq 2$.
In \cite{lamd}, Lachaud - Martin-Deschamps obtain the following result: 

\begin{theoreme}\label{theolamd}
Let $X$ be a smooth absolutly  irreducible projective algebraic curve of genus $g$ defined over $\F_q$; 
Let $J_X$ be the Jacobian of $X$ and $h=\mid J_X ({\mathbb{F}}_q)\mid$. Then: 
\begin{enumerate}
\item $$h\geq q^{g-1}\frac{(q-1)^2}{(q+1)(g+1)};$$
\item $$h\geq (\sqrt{q}-1)^2\frac{q^{g-1}-1}{g}\frac{\mid X({\mathbb{F}}_q)\mid+q-1}{q-1};$$
\item if $g>\frac{\sqrt{q}}{2}$ and if $X$ has at least a rational point over ${\mathbb{F}}_q$ , then 
$$h \geq (q^g-1)\frac{q-1}{q+g+gq}.$$
\end{enumerate}
\end{theoreme}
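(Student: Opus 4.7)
The plan is to relate $h = L_X(1)$ to the numbers $A_n$ of effective divisors of degree $n$ on $X$, via the zeta-function identity
\[
Z_X(t) \;=\; \sum_{n \ge 0} A_n t^n \;=\; \frac{L_X(t)}{(1-t)(1-qt)},
\]
together with the functional equation $L_X(t) = q^g t^{2g} L_X(1/(qt))$. The three inequalities will all be derived from a single ``master identity'' expressing $h$ as a positive linear combination of the $A_n$, with each part obtained by retaining different terms and discarding the rest.

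\textbf{Step 1 (master identity).} Writing $L_X(t) = (1-t)(1-qt)\sum A_n t^n$ gives the coefficients $a_n = A_n - (q+1)A_{n-1} + qA_{n-2}$ of $L_X$. Riemann--Roch forces $A_n = h(q^{n-g+1}-1)/(q-1)$ for $n \ge 2g-1$, while $A_{2g-2} = h(q^{g-1}-1)/(q-1) + q^{g-1}$, the extra $q^{g-1}$ reflecting the canonical class $K$ having $\ell(K) = g$ rather than $g-1$. Using the symmetry $a_{2g-n} = q^{g-n}a_n$ to pair low and high degrees, I would rewrite $h = \sum_{n=0}^{2g} a_n$ in the shape
\[
h \cdot C(q,g) \;=\; \sum_{n=0}^{g-1} c_n(q,g)\, A_n \;+\; R(q,g),
\]
with all $c_n$, $R$, and $C$ nonnegative and explicit.

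\textbf{Step 2 (the three bounds).} Part (1) follows by discarding every $A_n$ except the trivial $A_0 = 1$, the factor $(g+1)^{-1}$ reflecting an averaging over $g+1$ symmetric degree pairs produced by the functional equation. Part (2) uses instead the stronger lower bound $A_n \ge \binom{|X(\mathbb{F}_q)|+n-1}{n}$, obtained by counting effective divisors supported solely on the rational points of $X$, and combines it with a Weil-type factor to extract $(\sqrt{q}-1)^2$. Part (3) exploits the hypothesis $g > \sqrt{q}/2$ to ensure positivity of a coefficient in the master identity, together with the existence of a rational point $P$ providing the effective divisors $nP$ that witness $A_n \ge 1$ for every $n \ge 0$; the denominator $q + g(q+1)$ arises from collecting the constant, $g$-linear, and $gq$-linear contributions after summing a geometric series.

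\textbf{Main obstacle.} The technical heart of the argument is Step 1: one must invoke the functional equation precisely enough that after symmetrization, every term subsequently discarded is genuinely nonnegative, and in particular one has to treat the canonical degree $n = 2g-2$ with care because of the anomalous contribution $q^{g-1}$. Once this master identity is in hand, each of the three bounds reduces to elementary estimates on the retained $A_n$ and some bookkeeping of the coefficients in $q$ and $g$.
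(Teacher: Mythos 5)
Your overall strategy --- pass from $h$ to the coefficients $A_n$ of the zeta function and then lower-bound the retained $A_n$ by counting effective divisors supported on rational points --- is indeed the mechanism behind this theorem (which the paper does not reprove but quotes from Lachaud--Martin-Deschamps, recording only the key identity \eqref{mainformula} and the bounds \eqref{R1}, \eqref{R2}). However, your Step 1 has a genuine gap: there is no ``master identity'' $h\,C(q,g)=\sum_{n\le g-1}c_n(q,g)A_n+R(q,g)$ with nonnegative coefficients depending only on $q$ and $g$. If you use only the relations $a_n=A_n-(q+1)A_{n-1}+qA_{n-2}$, the functional equation $a_{2g-n}=q^{g-n}a_n$ and $h=L(1)=\sum_n a_n$, everything telescopes to the exact identity $h=A_g-qA_{g-2}$, whose negative coefficient is useless for lower bounds; no rearrangement of these inputs alone can force nonnegativity, because the correct exact statement is the Lachaud--Martin-Deschamps formula $\sum_{n=0}^{g-1}A_n+\sum_{n=0}^{g-2}q^{g-1-n}A_n=h\sum_{i=1}^{g}\mid 1-\pi_i\mid^{-2}$, i.e.\ \eqref{mainformula}, in which the factor multiplying $h$ is not a function of $(q,g)$ but of the inverse roots of the curve.

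The second missing ingredient is Weil's Riemann hypothesis $\mid\pi_i\mid=\sqrt{q}$: only through it can $R(F/{\mathbb{F}}_q)=\sum_i\mid 1-\pi_i\mid^{-2}$ be bounded above by $g/(\sqrt{q}-1)^2$, or, using $\sum_i(\pi_i+\overline{\pi_i})=q+1-B_1$, by $\bigl((g+1)(q+1)-B_1\bigr)/(q-1)^2$; these two upper bounds are exactly where the denominators $g$, $(q+1)(g+1)$ and $q+g+gq$ in the three stated inequalities come from (in particular $(g+1)(q+1)$ is not an ``averaging over $g+1$ symmetric degree pairs''). Your plan invokes only the functional equation and Riemann--Roch, with RH appearing merely as an unspecified ``Weil-type factor'' in part (2), so the positivity and the size of the coefficients you need are never actually produced. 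Once \eqref{mainformula} together with \eqref{R1}--\eqref{R2} is granted, your Step 2 is the right endgame: keep $A_0=1$ and the weights $q^{g-1-n}$ for (1), use $A_n\ge\binom{\mid X({\mathbb{F}}_q)\mid+n-1}{n}$ (in fact $A_n\ge \mid X({\mathbb{F}}_q)\mid$ suffices) for (2), and $A_n\ge 1$ from multiples of a rational point for (3); that part is routine bookkeeping.
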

This result was improved in certain cases by the following result of M. Perret in \cite{perr}:
\begin{theoreme}\label{theoperr}
Let $J_X$ the Jacobian variety of the projective smooth irreducible curve $X$
of genus $g$ defined over $\F_q$. Then
$$
\# J_X(\F_q)\geq \left(\frac{\sqrt{q}+1}{\sqrt{q}-1} \right)^{\frac{\# X(\F_q)-(q+1)}{2\sqrt{q}}-2\delta}(q-1)^g
$$
with $\delta=1$ if $\frac{\# X(\F_q)-(q+1)}{2\sqrt{q}} \notin \Z$ and $\delta=0$ otherwise.
\end{theoreme}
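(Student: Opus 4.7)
The plan is to use Weil's product formula to express $\#J_X(\F_q)$ in terms of the Frobenius eigenvalues and then deduce the stated estimate from a pointwise analytic inequality, applied once per eigenvalue pair. Writing the Frobenius eigenvalues as $\sqrt{q}\,e^{\pm i\theta_j}$ with $\theta_j\in[0,\pi]$ for $j=1,\dots,g$, one has
$$
\#J_X(\F_q)=\prod_{j=1}^{g}\bigl(q+1-2\sqrt{q}\cos\theta_j\bigr)\quad\text{and}\quad
\#X(\F_q)=q+1-2\sqrt{q}\sum_{j=1}^{g}\cos\theta_j,
$$
so the exponent $(\#X(\F_q)-(q+1))/(2\sqrt{q})$ equals $-\sum_j\cos\theta_j$. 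Because $q-1=(\sqrt{q}-1)(\sqrt{q}+1)$, the inequality (initially ignoring the $-2\delta$ term) factors as a product over $j$ of identical pointwise inequalities, so the problem reduces to a one-variable calculus statement.

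The key step I would carry out is the pointwise bound
$$
\frac{q+1-2\sqrt{q}\,x}{q-1}\;\geq\;\left(\frac{\sqrt{q}+1}{\sqrt{q}-1}\right)^{-x}\qquad\text{for all }x\in[-1,1].
$$
I would prove it by setting $f(x):=\log(q+1-2\sqrt{q}x)-\log(q-1)+x\log\bigl((\sqrt{q}+1)/(\sqrt{q}-1)\bigr)$, checking by hand that $f(\pm 1)=0$ (using the factorizations $q+1\mp 2\sqrt{q}=(\sqrt{q}\mp 1)^{2}$), and then computing $f''(x)=-4q/(q+1-2\sqrt{q}x)^{2}<0$ on $[-1,1]$. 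Concavity of $f$ with vanishing at the endpoints forces $f\geq 0$ throughout $[-1,1]$, which is exactly the desired inequality.

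Taking the product over $j=1,\dots,g$ and substituting the identity for $\#X(\F_q)$ then yields
$$
\#J_X(\F_q)\;\geq\;(q-1)^{g}\left(\frac{\sqrt{q}+1}{\sqrt{q}-1}\right)^{(\#X(\F_q)-(q+1))/(2\sqrt{q})},
$$
which is already at least as strong as the statement in the case $\delta=0$, and in the case $\delta=1$ differs from it only by the factor $((\sqrt{q}+1)/(\sqrt{q}-1))^{-2}<1$, a harmless weakening. The main obstacle I anticipate is explaining \emph{why} the statement carries the $-2\delta$ slack at all: if the concavity argument goes through as sketched, the sharper form holds unconditionally, so any genuine need for the correction must arise from a different route, for instance an argument that works with an integer-valued combination of Newton power sums of the Frobenius eigenvalues and therefore incurs a floor/rounding step costing exactly two units in the exponent. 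Identifying whether such a discretization is actually necessary, or whether $-2\delta$ is simply a cautious concession for uniformity with a related estimate, is the only delicate point I foresee.
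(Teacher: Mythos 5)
Your argument is correct, and it is worth noting that the paper itself offers no proof of this statement: Theorem \ref{theoperr} is simply recalled from Perret \cite{perr}, so there is nothing internal to compare against. Your route is a clean and self-contained one: the identities $\#J_X(\F_q)=\prod_{j=1}^{g}(q+1-2\sqrt{q}\cos\theta_j)$ and $\#X(\F_q)=q+1-2\sqrt{q}\sum_{j}\cos\theta_j$ are the standard consequences of the Weil conjectures, and your pointwise inequality is exactly the statement that the concave function $x\mapsto\log(q+1-2\sqrt{q}x)$ lies above its chord on $[-1,1]$, the chord being $\log(q-1)-x\log\bigl((\sqrt{q}+1)/(\sqrt{q}-1)\bigr)$ since $q+1\mp 2\sqrt{q}=(\sqrt{q}\mp1)^2$; your check $f(\pm1)=0$, $f''(x)=-4q/(q+1-2\sqrt{q}x)^2<0$ is accurate, and summing over $j$ does yield
$$\#J_X(\F_q)\;\geq\;(q-1)^g\left(\frac{\sqrt{q}+1}{\sqrt{q}-1}\right)^{\frac{\#X(\F_q)-(q+1)}{2\sqrt{q}}},$$
which implies the stated bound because the extra factor $\bigl((\sqrt{q}+1)/(\sqrt{q}-1)\bigr)^{-2\delta}$ only weakens it. Your closing diagnosis is also essentially right: the $-2\delta$ in Perret's formulation is not forced by the analysis you give; it arises from an argument that locates the minimizer of $\prod_j(q+1-2\sqrt{q}x_j)$ under the linear constraint $\sum_j x_j$ fixed at an extreme point of the cube $[-1,1]^g$ (all but one coordinate equal to $\pm1$) and then discards the leftover fractional coordinate crudely, costing up to two units in the exponent when $(\#X(\F_q)-(q+1))/(2\sqrt{q})$ is not an integer. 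So your chord (secant-line) bound both proves the theorem as stated and sharpens it; the only caveat is the implicit standing hypothesis that $X$ is absolutely irreducible (so that the Riemann hypothesis for its zeta function applies and $|\cos\theta_j|\leq1$), which is how the theorem is meant in the paper.
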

Moreover, Tsfasman \cite{tsfa} and Tsfasman-Vladut \cite{tsvl} obtain asymptotics for the Jacobian; 
these best known results can also be found in \cite{tsvlno}. We recall the following three important theorems
contained in this book.
The first one concerns the so-called asymptotically good families.

\begin{theoreme}\label{theoh1}
Let $\{X_i\}_i$ be a family of curves over $\F_q$ (called asymptotically good) of growing genus such that 
$$\lim_{i \rightarrow +\infty} \frac{N(X_i)}{g(X_i)}=A>0,$$
where $N(X_i)$ denotes the number of rational points of $X_i$ over $\F_q$. 
Then $$\liminf_{g \rightarrow +\infty} \frac{log_qh(X_i)}{g(X_i)}\geq 1+A\log_q\left(\frac{q}{q-1}\right).$$
\end{theoreme}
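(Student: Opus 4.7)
The plan is to start from an explicit formula for $\log h$ in terms of the point counts $N_m := \#X(\F_{q^m})$, and then exploit the hypothesis $N_1/g\to A$ via a truncation argument. Writing $L(T)=\prod_{i=1}^{2g}(1-\alpha_i T)$ for the $L$-polynomial of $X=X_i$ (so $|\alpha_i|=\sqrt{q}$ and $h=L(1)$), I take the logarithm of the factorization $|1-\alpha_i|=\sqrt{q}\,|1-\alpha_i^{-1}|$, use the absolutely convergent expansion $\log|1-z|=-\mathrm{Re}\sum_{m\geq 1}z^m/m$ with $z=\alpha_i^{-1}$, and sum over $i$. Combining with the relations $\alpha_i^{-1}=\overline{\alpha_i}/q$ (since $|\alpha_i|^2=q$) and $\sum_i\alpha_i^m=q^m+1-N_m$, one obtains the identity
\[
\log h \;=\; g\log q \;+\; \sum_{m=1}^{\infty}\frac{N_m-q^m-1}{m\,q^m},
\]
in which the series converges absolutely by Weil's bound $|N_m-q^m-1|\leq 2g\,q^{m/2}$.

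I then truncate the sum at a parameter $M$, arranged so that I may send $g\to\infty$ first and $M\to\infty$ afterwards. For the tail, Weil again gives
\[
\Bigl|\sum_{m>M}\frac{N_m-q^m-1}{m\,q^m}\Bigr| \;\leq\; 2g\sum_{m>M}\frac{q^{-m/2}}{m} \;=:\; g\,\varepsilon_M,
\]
with $\varepsilon_M\to 0$ as $M\to\infty$, uniformly in $g$. For the head, the elementary observation that every $\F_q$-rational point stays $\F_{q^m}$-rational gives $N_m\geq N_1$, hence
\[
\sum_{m=1}^{M}\frac{N_m-q^m-1}{m\,q^m} \;\geq\; (N_1-1)\sum_{m=1}^{M}\frac{q^{-m}}{m}\;-\;H_M,
\]
where $H_M=\sum_{m=1}^M 1/m$ is the $M$-th harmonic number.

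Dividing the explicit identity by $g=g(X_i)$ and passing to the $\liminf$ along the family: for any fixed $M$ one has $(N_1(X_i)-1)/g(X_i)\to A$ while $H_M/g(X_i)\to 0$, so
\[
\liminf_i\frac{\log h(X_i)}{g(X_i)} \;\geq\; \log q\;+\;A\sum_{m=1}^{M}\frac{q^{-m}}{m}\;-\;\varepsilon_M.
\]
Letting $M\to\infty$, using $\sum_{m\geq 1}q^{-m}/m=\log\bigl(q/(q-1)\bigr)$, and dividing by $\log q$ delivers the claimed bound $1+A\log_q\bigl(q/(q-1)\bigr)$.

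The single delicate point — and where I expect the main bookkeeping effort to lie — is that the three natural pieces of the explicit identity, namely $\sum N_m/(mq^m)$, $\sum 1/m$, and $\sum q^{-m}/m$, all diverge individually; only their combination converges. One therefore cannot apply $N_m\geq N_1$ termwise and then separate the sums at $M=\infty$. The order of limits (first $i\to\infty$ with $M$ fixed, then $M\to\infty$ using the $g$-uniform tail estimate) is exactly what legitimizes the rearrangement.
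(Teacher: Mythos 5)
Your proof is correct, but note that the paper itself contains no proof of this statement: Theorem~\ref{theoh1} is merely recalled from Tsfasman--Vl\u adu\c t (and the book of Tsfasman--Vl\u adu\c t--Nogin) as background, so there is no in-paper argument to compare against. Your route --- the exact identity $\log h = g\log q + \sum_{m\geq 1}(N_m-q^m-1)/(mq^m)$ obtained from $h=\prod_i(1-\alpha_i)$, $|1-\alpha_i|=\sqrt{q}\,|1-\alpha_i^{-1}|$ and the expansion of $\log|1-\alpha_i^{-1}|$, followed by truncation at $M$ with a Weil-bound tail estimate uniform in $g$ and the termwise inequality $N_m\geq N_1$ in the head --- is sound, and your ordering of limits ($i\to\infty$ first with $M$ fixed, then $M\to\infty$) correctly disposes of the issue that the three constituent series diverge separately; this is essentially the classical zeta-function argument of Tsfasman. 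It is worth contrasting it with the machinery the present paper develops for its own asymptotic results such as Theorem~\ref{mainun}: there the authors avoid the analytic identity entirely and instead combine the Lachaud--Martin-Deschamps formula $S(F/\F_q)=hR(F/\F_q)$ with combinatorial lower bounds on the numbers $A_n$ of effective divisors built from places of a fixed degree $r$. That approach generalizes to places of degree $r>1$ and requires only a positive $\liminf$ of $B_r/g$, which is the paper's main point; your analytic argument as written uses only $N_1=B_1$, but it reaches the sharp constant with much less bookkeeping, and in fact it too survives weakening the hypothesis to $\liminf_i N(X_i)/g(X_i)=A$, since $\liminf$ is superadditive and the coefficient $\sum_{m=1}^{M}q^{-m}/m$ multiplying $(N_1-1)/g$ is positive.
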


The second important result, established by Tsfasman in \cite{tsfa}, relates to particular families of curves, 
which are called asymptotically exact, namely
the families of (smooth, projective, absolutly irredicible ) curves 
${\mathcal X}/\F_q=\{X_i\}_i$ 
defined over $\F_q$ such that $g(X_i) \rightarrow +\infty$ 
and for any $m\in \N$ the limit 
$$\beta_m=\beta_m({\mathcal X}/\F_q)= \lim_{i\rightarrow +\infty}\frac{B_m(X_i)}{g(X_i)}$$ 
exists, where $B_m(X_i)$ 
denotes the number of points of degree $m$ of $X_i$ over $\F_q$ 
(in term of the dual language of algebraic function field theory, 
it corresponds to the number of places of degree $m$ of  the algebraic function field $F(X_i)/\F_q$ 
of the curve $X_i/\F_q$). 

\begin{theoreme}\label{theoh2}
 For an asymptotically exact family of curves ${\mathcal X}/\F_q=\{X_i\}_i$ defined over $\F_q$, the limit 
  $$H({\mathcal X}/\F_q)=\lim_{i\rightarrow +\infty}{\frac{1}{g(X_i)}\log_q h(X_i)}$$ exists and equals 
  $$H({\mathcal X}/\F_q)=1 +\sum_{m=1}^{\infty}\beta_m.\log_q \frac{q^{m}}{q^{m}-1}.$$
\end{theoreme}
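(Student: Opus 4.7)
My plan is to derive an absolutely convergent series expressing $\log(h(X)/q^{g(X)})$ in terms of the integers $B_m(X)$, pass to the limit $i\to\infty$ termwise, and only then rearrange. Writing the $L$-polynomial of $X$ as $L(t)=\prod_{j=1}^{2g}(1-\omega_j t)$, so that $|\omega_j|=\sqrt{q}$ by Weil and $\prod_j\omega_j=q^g$ by the functional equation, one has $h(X)=L(1)$ and hence $h(X)/q^{g(X)}=\prod_j(1-\omega_j^{-1})$. Because $|\omega_j^{-1}|=q^{-1/2}<1$, expanding each $\log(1-\omega_j^{-1})$ as a power series and using that the multiset $\{\omega_j\}$ is stable under complex conjugation with $\omega_j\overline{\omega_j}=q$, together with the Lefschetz identity $\sum_j\omega_j^n=q^n+1-N_n(X)$, I obtain the absolutely convergent identity
\begin{equation}\label{eq:hident}
\log\frac{h(X)}{q^{g(X)}} \;=\; \sum_{n=1}^{\infty} \frac{N_n(X)-q^n-1}{n\,q^n}, \qquad N_n(X)=\sum_{d\mid n} d\,B_d(X).
\end{equation}

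Dividing \eqref{eq:hident} for $X=X_i$ by $g(X_i)$, the Weil bound $|N_n-q^n-1|\leq 2g\,q^{n/2}$ dominates the general term uniformly in $i$ by the summable sequence $b_n:=2/(n q^{n/2})$. For each fixed $n$, asymptotic exactness forces the finite sum
$$\frac{N_n(X_i)-q^n-1}{g(X_i)\,n q^n}=\frac{1}{nq^n}\sum_{d\mid n}d\,\frac{B_d(X_i)}{g(X_i)}-\frac{q^n+1}{g(X_i)\,nq^n}\;\longrightarrow\;\frac{1}{nq^n}\sum_{d\mid n}d\beta_d,$$
so dominated convergence for series produces
$$\lim_{i\to\infty}\frac{\log h(X_i)-g(X_i)\log q}{g(X_i)}=\sum_{n=1}^{\infty}\frac{1}{nq^n}\sum_{d\mid n}d\beta_d=\sum_{d=1}^{\infty}\beta_d\,\log\frac{q^d}{q^d-1},$$
the last equality arising from the substitution $n=dk$ and an interchange of summation. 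This simultaneously shows that the limit $H(\mathcal{X}/\F_q)$ exists; dividing by $\log q$ yields the stated formula.

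The main obstacle is justifying that interchange, equivalently the absolute convergence of $\sum_d\beta_d\log(q^d/(q^d-1))$. This is precisely where the Drinfeld-Vladut / Tsfasman-Vladut basic inequality $\sum_m m\beta_m/(q^{m/2}-1)\leq 1$ intervenes: it forces $\beta_d=O(q^{d/2}/d)$, hence $\beta_d\log(q^d/(q^d-1))=O(q^{-d/2}/d)$, yielding absolute convergence and legitimizing Fubini. A conceptual subtlety worth emphasizing is that one must not split \eqref{eq:hident} into $\sum_n N_n/(nq^n)$ and $\sum_n(q^n+1)/(nq^n)$ for a single curve $X$: each of these diverges, and only the Weil-type cancellation between them keeps the full series finite. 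That is exactly why passing to the limit $i\to\infty$ under the uniform domination by $b_n$ before performing any rearrangement is essential.
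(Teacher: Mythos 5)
Your argument is correct, but note that the paper does not actually prove Theorem~\ref{theoh2}: it is recalled without proof as a known result of Tsfasman (cited from \cite{tsfa} and \cite{tsvlno}), so there is no internal proof to match. Your route is essentially the classical zeta-function one: $h=L(1)=q^{g}\prod_j(1-\omega_j^{-1})$, the logarithmic expansion combined with $\sum_j\omega_j^{n}=q^{n}+1-N_n$ to get $\log\bigl(h/q^{g}\bigr)=\sum_{n\ge1}\frac{N_n-q^{n}-1}{nq^{n}}$, uniform domination by $2/(nq^{n/2})$ from the Weil bound, termwise passage to the limit under asymptotic exactness, and the Fubini rearrangement $\sum_n\frac{1}{nq^n}\sum_{d\mid n}d\beta_d=\sum_d\beta_d\log\frac{q^d}{q^d-1}$. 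All steps check out; one small simplification is that the final interchange needs no appeal to the basic inequality $\sum_m m\beta_m/(q^{m/2}-1)\le 1$, since every term of the double series is nonnegative and the iterated sum in one order is already shown finite by domination, so Tonelli applies directly (your use of the basic inequality is a valid, if stronger, justification). It is worth contrasting your exact identity with the machinery the paper actually develops for its new results, namely the Lachaud--Martin-Deschamps relation $S(F/\F_q)=hR(F/\F_q)$ together with combinatorial lower bounds on the number of effective divisors: that method yields only one-sided (lower) estimates under weaker hypotheses, whereas the exact limit formula of Theorem~\ref{theoh2} genuinely requires the two-sided cancellation in the series $\sum_n(N_n-q^n-1)/(nq^n)$ that you correctly insist must not be split termwise for a fixed curve.
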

 
The third theorem is a general result concerning the family of all curves defined over $\F_q$. 

\begin{theoreme}\label{theoh3}
Let $\{X_i\}_i$ be the family of all curves over $\F_q$ (one curves from each isomorphism class).
Then, we have $$1\leq \liminf_{g \rightarrow +\infty} \frac{\log_qh(X_i)}{g(X_i)}\leq  \limsup_{g \rightarrow +\infty} 
\frac{\log_qh(X_i)}{g(X_i)}\leq 1+(\sqrt{q}-1)\log_q\left(\frac{q}{q-1}\right).$$
\end{theoreme}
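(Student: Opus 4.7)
The two inequalities on the extreme sides are essentially independent, so I would treat them separately.

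\textbf{Lower bound.} This follows directly from Theorem~\ref{theolamd}(1), which already applies to any smooth projective irreducible curve over $\F_q$. Taking logarithms in the inequality $h \geq q^{g-1}(q-1)^2/((q+1)(g+1))$ and dividing by $g$ gives
$$\frac{\log_q h(X_i)}{g(X_i)} \;\geq\; \frac{g(X_i)-1}{g(X_i)} + \frac{1}{g(X_i)}\log_q\frac{(q-1)^2}{(q+1)(g(X_i)+1)},$$
and since $\log_q g(X_i)/g(X_i) \to 0$ while the first term tends to $1$, we obtain $\liminf \geq 1$. This step is completely routine.

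\textbf{Upper bound.} The natural strategy is to pass from an arbitrary family to an asymptotically exact subfamily. Pick a subsequence $\{X_{i_k}\}$ realizing the $\limsup$; since Weil's bound gives $B_m(X_i)/g(X_i) \leq (q^{m/2}+1)^2/m$, each sequence $B_m(X_{i_k})/g(X_{i_k})$ is bounded, so a Cantor diagonal extraction produces an asymptotically exact sub-subsequence with limits $\beta_m$. Theorem~\ref{theoh2} then yields
$$\limsup_i \frac{\log_q h(X_i)}{g(X_i)} \;=\; 1 + \sum_{m=1}^{\infty} \beta_m \log_q\frac{q^m}{q^m-1}.$$
At this point I would invoke the generalized Drinfeld--Vl\u{a}du\c{t} basic inequality of Tsfasman--Vl\u{a}du\c{t} \cite{tsvlno},
$$\sum_{m=1}^{\infty} \frac{m\,\beta_m}{q^{m/2}-1} \;\leq\; 1,$$
and reduce the problem to a linear program: maximize $\sum_m b_m \beta_m$ with $b_m = \log_q\frac{q^m}{q^m-1}$ under the constraint $\sum_m a_m \beta_m \leq 1$ with $a_m = m/(q^{m/2}-1)$. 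The maximum equals $\max_m (b_m/a_m)$, attained at the single $m$ maximizing this ratio.

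\textbf{Main obstacle.} The crux is showing that the ratio $b_m/a_m = \tfrac{q^{m/2}-1}{m}\log_q\tfrac{q^m}{q^m-1}$ attains its maximum at $m=1$ for every $q \geq 2$, in which case the LP value is exactly $(\sqrt{q}-1)\log_q\frac{q}{q-1}$, giving the claimed bound. Heuristically this is plausible because $b_m/a_m$ behaves like $1/(m q^{m/2}\ln q)$ for large $m$, but a rigorous comparison for small $m$ (particularly $m=2$ when $q=2$) needs to be done by hand. If this monotonicity should fail for some small $m$, one would have to weaken the bound accordingly; but assuming the expected convexity, the ratio $b_m/a_m$ is strictly decreasing in $m$, and the conclusion follows by writing $\sum_m b_m \beta_m = \sum_m (b_m/a_m)(a_m \beta_m) \leq (b_1/a_1)\sum_m a_m \beta_m$.
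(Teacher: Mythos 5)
The paper does not prove this statement: Theorem~\ref{theoh3} is simply recalled from \cite{tsvlno}, so there is no internal proof to compare against. Your plan is the standard Tsfasman--Vl\u{a}du\c{t} route (extract, by a diagonal argument, an asymptotically exact subfamily realizing the $\limsup$, apply Theorem~\ref{theoh2}, then bound the resulting series by the basic inequality $\sum_m m\beta_m/(q^{m/2}-1)\leq 1$ via the obvious linear-programming estimate), and it is sound; the lower bound via Theorem~\ref{theolamd}(1) is indeed routine. The only point you leave open --- that the ratio $\frac{b_m}{a_m}=\frac{q^{m/2}-1}{m}\log_q\frac{q^m}{q^m-1}$ is largest at $m=1$ --- is true, and note that you do not need monotonicity or convexity in $m$: it suffices that the $m=1$ term dominates all others. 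For $m\geq 2$, using $\ln(1+x)\leq x$,
$$\frac{q^{m/2}-1}{m}\,\ln\frac{q^m}{q^m-1}\;\leq\;\frac{q^{m/2}-1}{m\,(q^m-1)}\;=\;\frac{1}{m\,(q^{m/2}+1)}\;\leq\;\frac{1}{2(q+1)},$$
while, using $\ln(1+x)\geq x/(1+x)$,
$$(\sqrt{q}-1)\,\ln\frac{q}{q-1}\;\geq\;\frac{\sqrt{q}-1}{q}.$$
So it remains to check $2(q+1)(\sqrt{q}-1)\geq q$; writing $t=\sqrt{q}$ this reads $2t^3-3t^2+2t-2\geq 0$, which is positive at $t=\sqrt{2}$ and has derivative $6t^2-6t+2>0$, hence holds for all $q\geq 2$. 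With this inserted, your argument gives exactly the stated bound $1+(\sqrt{q}-1)\log_q\frac{q}{q-1}$; the one mild caveat is that the whole upper bound rests on the basic inequality for asymptotically exact families, which you must import from \cite{tsvlno} (as the paper itself implicitly does).
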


\subsection{New results}\label{subsec:nr}
We remark that Theorems \ref{theolamd} and \ref{theoperr} in the non-asymptotic case and Theorem \ref{theoh1}
in the asymptotic case involve rational points of the curve. But in many cases the number
of rational points is low while the number of places of a certain degree $r$ is large. Then 
it may be interesting to give formulas that are based on the number of points of degree $r\geq 1$. 
This is the main idea of the article, even if the formula applied with $r = 1$ gives in many cases better lower bounds for
the class number
than Theorems \ref{theolamd} and \ref{theoperr}.

In this paper we give lower bounds on the class number of an algebraic function field
of one variable over the finite field $\F_q$ in the two following situations:
\begin{itemize}
 \item in the non-asymptotic case, namely when the function field is fixed; in this context, 
we extend the formulas of Theorem \ref{theolamd} which is given under very weak assumptions, 
to obtain more precise bounds 
taking into account 
the number of points of a given degree $r\geq 1$, possibly of degree one.
 \item in the asymptotic case, where we consider a sequence of function fields $F_k$ of genus $g_k$
growing to infinity; we give asymptotic lower bounds on the class number in several cases, 
under assumptions weaker than those of \cite{tsvlno}; we extend Theorem \ref{theoh1} by using points
of dgree $r$ and improve it by weakening the hypothesis, assuming only that $\frac{B_r(k)}{g_k}$ 
has a strictly positive lower limit, where $B_r(k)$ is the number of places of
degree $r$ of $F_k$; in particular this shows that the lower bound $1$ 
of Theorem \ref{theoh3} can be widely improved if there is at least a place of degree $1$ in each $F_k$
and an integer $r\geq 1$ such that $\liminf_{k \rightarrow +\infty} \frac{B_r(k)}{g_k}>0$;
we also improve Theorem \ref{theoh2} in certain cases by removing the assumption ``$(\F_k)_k$
is an asymptotically exact sequence''.
\end{itemize}

Let $A_n=A_n(F/{\mathbb{F}}_q)$ be the number of effective divisors of degree $n$ of 
an algebraic function field $F/{\mathbb{F}}_q$ 
defined over ${\mathbb{F}}_q$ of genus $g\geq 2$ 
and  $h=h(F/{\mathbb{F}}_q)$ the class number of $F/{\mathbb{F}}_q$. 
Let $B_n=B_n(F/{\mathbb{F}}_q)$ the number of places of degree $n$ of $F/{\mathbb{F}}_q$.

\medskip

Let us set $$S(F/{\mathbb{F}}_q)=\sum_{n=0}^{g-1}A_n+\sum_{n=0}^{g-2}q^{g-1-n}A_n \quad 
\hbox{ and } \quad R(F/{\mathbb{F}}_q)=\sum_{i=1}^{g}\frac{1}{\mid1-\pi_i\mid^2},$$ 
where $(\pi_i,\overline{\pi_i})_{1\leq i \leq g}$ are the reciprocal roots of the numerator 
of the zeta-function $Z(F/{\mathbb{F}}_q,T)$ of $F/{\mathbb{F}}_q$.
By a result due to G. Lachaud and M. Martin-Deschamps \cite{lamd}, we know that
\begin{equation}\label{mainformula}
S(F/{\mathbb{F}}_q)=hR(F/{\mathbb{F}}_q).
\end{equation}

Therefore, to find good lower bounds on  $h$, just find a good lower bound on
$S(F/{\mathbb{F}}_q)$ and  a good upper bound on $R(F/{\mathbb{F}}_q)$. 

It is known by \cite{lamd} that the quantity $R(F/{\mathbb{F}}_q)$  is bounded by the following upper bound: 
\begin{equation}\label{R1}
R(F/{\mathbb{F}}_q)\leq \frac{g}{(\sqrt{q}-1)^2},
\end{equation}
or with this best inquality:  
\begin{equation}\label{R2}
R(F/{\mathbb{F}}_q)\leq \frac{1}{(q-1)^2}  \left(\strut (g+1)(q+1)-B_1(F/{\mathbb{F}}_q)\right).
\end{equation}

Moreover, the inequality (\ref{R2}) is obtained as follows:
$$R(F/{\mathbb{F}}_q)=\sum_{i=1}^{g}\frac{1}{(1-\pi_i)(1-\overline{\pi_i})}=$$
$$\sum_{i=1}^{g}\frac{1}{1+q-(\pi_i+\overline{\pi_i})}.$$ 
Multiplying the denominators by the corresponding conjugated quantities, 
we get: 
$$R(F/{\mathbb{F}}_q)\leq \frac{1}{(q-1)^2}\sum_{i=1}^{g}(1+q+\pi_i+\overline{\pi_i}).$$
This last inequality associated to the following formula deduced from the Weil's formulas:
$$\sum_{i=1}^{g}(\pi_i+\overline{\pi_i})=1+q-B_1(F/{\mathbb{F}}_q)$$
gives the inequality (\ref{R2}). 
This inequality cannot be improved in the general case. 

Hence, in this paper, we propose to study some lower bounds on   
$S(F/{\mathbb{F}}_q)$. 
In this aim, we determine some lower bounds on the number of effective divisors of degree $n\leq g-1$ 
obtained from the number of effective divisors of degree $n\leq g-1$ containing in their support 
a maximum number of places of a fixed degree  $r\geq 1$.
We deduce lower bounds on the class number. 
 It is a successful strategy that allows us 
 to improve the known lower bounds on $h$ in the general case (except if there is no place of degree one). 
 It also allows us to obtain the best known asymptotics for $h$ when we specialize our study
to some families of curves having asymptotically a large number of places of degree $r$ forme some value of $r$, 
namely when $\liminf_{g\rightarrow +\infty} \frac{B_r(g)}{g}>0$.
  
\subsection{Organization of the paper}
In Section \ref{sec:lower} we study the number of effective divisors of an algebraic function 
field over the finite field $\F_q$. As we saw in the introduction, this is the main point to
estimate the class number. By combinatorial considerations, 
we give a lower bound on the number of these divisors. Then we estimate 
the two terms $\Sigma_1=\sum_{n=0}^{g-1}A_n$ and $\Sigma_2=\sum_{n=0}^{g-2}q^{g-1-n}A_n$ of 
the number $S(F/{\mathbb{F}}_q)$ introduced in Paragraph \ref{subsec:nr}.
This yields in Section \ref{sec:class} a lower bound on the class number.
In Section \ref{sec:asymptotic} 
we give the asymptotic behavior of the class number, 
with assumptions weaker than those made in the theorems recalled in the introduction.
The final section \ref{sec:examples} presents several examples where we obtain lower bounds 
on the class number.

\section{Lower bounds on the number of effective divisors}\label{sec:lower}
In this section, we obtain a lower bound on the number of effective divisors of degree $\leq g-1$. 
We derive a lower bounds on $S(F/{\mathbb{F}}_q)$. 


Let $B_r(F/{\mathbb{F}}_q)$ be the number of places of degree $r$.
In the first time, we determine lower bounds on the number of effective divisors of degree $\leq g-1$. 


\medskip

By definition, we have  $S(F/{\mathbb{F}}_q)=\sum_{n=0}^{g-1}A_n+\sum_{n=0}^{g-2}q^{g-1-n}A_n$. 
We will denote by $\Sigma_1$ the first sum of the right member and $\Sigma_2$ the second one:
\begin{equation}\label{sigma}
\Sigma_1=\sum_{n=0}^{g-1}A_n \quad \hbox{and} \quad \Sigma_2=\sum_{n=0}^{g-2}q^{g-1-n}A_n.
\end{equation}
Let us fix a degree $n$ and set
$$U_n=\left \{b=(b_1,\cdots,b_n)~|~ b_i \geq 0 \hbox{ et } \sum_{i=1}^n ib_i = n\right \}.$$
Note first that if $B_i \geq 1$ and $b_i \geq 0$,  the number of solutions of the equation 
$n_1+n_2+ \cdots+ n_{B_i} = b_i$
with integers $\geq 0$ is:
\begin{equation}\label{mcoef}
\left (
\begin{array}{c}
 B_i+b_i-1\\
 b_i
\end{array}
\right ).
\end{equation}
 Then the number of effective divisors of degree $n$ is given by the following result, already mentioned in 
 \cite{tsvl}:

\begin{proposition}\label{propoAn}
The number of effective divisors of degree $n$ of an algebraic function field $F/{\mathbb{F}}_q$ is:
$$A_n=\sum_{b\in U_n}\left [ \prod_{i=1}^n \left (
\begin{array}{c}
 B_i+b_i-1\\
b_i
\end{array}
\right )\right ].$$
\end{proposition}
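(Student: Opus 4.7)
The plan is to enumerate effective divisors of degree $n$ by classifying them according to the total weight they place on the set of places of each given degree. Recall that an effective divisor $D$ is a finitely supported formal sum $D = \sum_P n_P P$ over the places of $F/\mathbb{F}_q$ with $n_P \in \mathbb{Z}_{\geq 0}$, and $\deg D = \sum_P n_P \deg P$. Since only places of degree $i \leq n$ can appear in $D$ with positive multiplicity, I would first fix $i \in \{1,\ldots,n\}$ and record
$$b_i \;=\; \sum_{\deg P = i} n_P,$$
the cumulative weight that $D$ puts on the $B_i$ places of degree $i$. The degree condition $\deg D = n$ then translates into $\sum_{i=1}^n i\,b_i = n$, so the tuple $b=(b_1,\ldots,b_n)$ belongs to $U_n$. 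I would call $b$ the \emph{type} of $D$.

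Next, for a fixed type $b\in U_n$, I would count the effective divisors of that type. Since the multiplicities assigned to places of different degrees are independent, this count factors as a product indexed by $i=1,\ldots,n$, the $i$-th factor being the number of ways to write $b_i = n_1 + n_2 + \cdots + n_{B_i}$ with $n_j \geq 0$ (where the $n_j$ run through the multiplicities assigned to the $B_i$ places of degree $i$). By the standard stars-and-bars formula, recalled in (\ref{mcoef}), this factor equals $\binom{B_i+b_i-1}{b_i}$. The boundary situations $B_i = 0$ or $b_i = 0$ are handled by the usual conventions: if $B_i=b_i=0$ one reads $\binom{-1}{0}=1$ (empty product of empty distributions), and if $B_i=0$ with $b_i>0$ the corresponding $b$ is simply not in $U_n$ for that function field.

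Summing the product $\prod_{i=1}^n \binom{B_i+b_i-1}{b_i}$ over $b\in U_n$ therefore gives exactly $A_n$, as claimed. The argument is routine combinatorics (a refined stars-and-bars applied degree-by-degree), so there is no serious obstacle; the only delicate point is to be explicit about the bijection between effective divisors of degree $n$ and pairs (type $b\in U_n$, distribution of each $b_i$ over the $B_i$ places of degree $i$), which is exactly what justifies the product-of-binomials formula.
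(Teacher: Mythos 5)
Your proof is correct and follows essentially the same route as the paper: classify each effective divisor by the vector $b\in U_n$ of cumulative multiplicities on places of each degree, count the distributions for each degree by the stars-and-bars coefficient (\ref{mcoef}), take the product over $i$, and sum over $U_n$. Your explicit treatment of the boundary cases $B_i=0$ or $b_i=0$ is a small welcome addition, but the argument is the same.
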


\begin{proof}
It is sufficient to consider that in the formula, $b_i$ is the sum of coefficients that
are applied to 
the places of degree $i$.
So, the sum of the terms $ib_i$ is the degree $n$ of the divisor. The number of ways to get a divisor of 
degree $ib_i$ with some places of degre $i$ is given by the binomial coefficient (\ref{mcoef}). 
For a  given $b$ , the product of the second member is the number of effective divisors 
for which the weight corresponding to the places of degree $i$ is $ib_i$. Then it remains to compute
the sum over all  possible $b$ to get the number of effective divisors.
\end{proof}

From Proposition \ref{propoAn} we obtain in the next proposition a lower bound on the number of 
effective divisors of degree  $n\leq g-1$. This lower bound is constructed using only
places of degree
$1$ and $r$ (possibly $1$) as follows: the weight associated to places of degree $r$ is maximum, 
namely $\lfloor \frac{n}{r}\rfloor$.
The weight associated to places of degree $1$ is $n \mod r$.

\begin{proposition}\label{minA_n}
Let $r$ and $n$ be two integers $>0$. 
Suppose that $B_1$ and $B_r$ are different from zero. Then
\begin{equation} A_n \geq 
\left (
\begin{array}{c}
 B_r+m_r(n)-1\\
 B_r-1
\end{array}
\right )
\left (
\begin{array}{c}
 B_1+s_r(n)-1\\
 B_1-1
\end{array}
\right )
\end{equation}
where $m_r(n)$ and $s_r(n)$ are respectively the quotient and the remainder 
of the Euclidian division of $n$ by $r$.
\end{proposition}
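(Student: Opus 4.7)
The plan is to apply Proposition \ref{propoAn} and extract a single term from the sum defining $A_n$. Since every summand in
$$A_n=\sum_{b\in U_n}\prod_{i=1}^n \binom{B_i+b_i-1}{b_i}$$
is a non-negative integer, the sum is bounded below by any particular term corresponding to a specific tuple $b \in U_n$. The strategy is therefore to exhibit a single tuple $b^{\star}\in U_n$ whose associated term realizes exactly the right-hand side of the claimed inequality.

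Concretely, I would define $b^{\star}=(b^{\star}_1,\ldots,b^{\star}_n)$ by setting
$$b^{\star}_r = m_r(n), \qquad b^{\star}_1 = s_r(n), \qquad b^{\star}_i = 0 \text{ for } i\notin\{1,r\}.$$
(If $r=1$, the two prescriptions collapse consistently since then $m_r(n)=n$ and $s_r(n)=0$.) The Euclidean division identity $n = r\, m_r(n) + s_r(n)$ shows that $\sum_{i=1}^n i\, b^{\star}_i = r\, m_r(n)+1\cdot s_r(n) = n$, so $b^{\star}\in U_n$. The hypotheses $B_1\geq 1$ and $B_r\geq 1$ guarantee that all binomial coefficients appearing in the associated term are well-defined and positive.

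Next I would evaluate the product $\prod_{i=1}^n \binom{B_i+b^{\star}_i-1}{b^{\star}_i}$. All factors with $i\notin\{1,r\}$ equal $\binom{B_i-1}{0}=1$, so the product reduces to
$$\binom{B_r+m_r(n)-1}{m_r(n)}\binom{B_1+s_r(n)-1}{s_r(n)}.$$
Using the standard symmetry $\binom{a}{k}=\binom{a}{a-k}$, each factor can be rewritten in the form stated in the proposition:
$$\binom{B_r+m_r(n)-1}{m_r(n)} = \binom{B_r+m_r(n)-1}{B_r-1}, \qquad \binom{B_1+s_r(n)-1}{s_r(n)} = \binom{B_1+s_r(n)-1}{B_1-1}.$$
Comparing with the lower bound for $A_n$ obtained by keeping only the $b^{\star}$-term then yields the desired inequality.

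There is really no serious obstacle here: the entire content is a judicious choice of index in the sum of Proposition \ref{propoAn}, combined with an arithmetic check that $b^{\star}\in U_n$. The only mild point to handle carefully is the boundary behavior when $s_r(n)=0$ or $m_r(n)=0$, where one of the two binomial factors becomes $\binom{B_j-1}{B_j-1}=1$; the formula then degenerates correctly to an inequality involving only places of a single degree.
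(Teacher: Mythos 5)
Your proposal is correct and follows essentially the same route as the paper: the paper's proof also selects the single tuple $a$ with $a_1=s_r(n)$, $a_r=m_r(n)$ and all other entries zero, drops the remaining nonnegative terms in the formula of Proposition \ref{propoAn}, and identifies the surviving product with the stated binomial coefficients. Your additional remarks on the degenerate cases ($r=1$, $m_r(n)=0$ or $s_r(n)=0$) are consistent with, though not spelled out in, the paper's argument.
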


\begin{proof}
Let $a=(a_i)_{1\leq i \leq n}$ be the following element of $U_n$:
$$a=(s_r(n),0,\cdots 0,m_r(n),0, \cdots, 0).$$ 
Then by Proposition (\ref{propoAn}), we have:
$$A_n=\sum_{b\in U_n} \prod_{i=1}^n 
\left (
\begin{array}{c}
B_i+b_i-1\\
b_i\end{array}\right )\geq  \prod_{i=1}^n \left (\begin{array}{c} B_i+a_i-1\\ a_i\end{array}\right )= $$
$$\left (\begin{array}{c} B_r+m_r(n)-1\\B_r-1 \end{array}\right )\left (\begin{array}{c}B_1+s_r(n)-1\\B_1-1\end{array}\right ).$$
\end{proof}

In particular, if
$r=1$ we obtain:
$A_n \geq 
\left (
\begin{array}{c}
B_1+n-1\\
n
\end{array}
\right )
=
\left (
\begin{array}{c}
B_1+n-1\\
B_1-1 
\end{array}
\right ).
$

\subsection{Lower bound on the sum $\Sigma_1$}

By using  the same lower bound for each term $A_n$ as in Proposition \ref{minA_n},
then by associating the $r$ indexes $n$ with the same $m_r(n)$, we obtain:

\begin{equation}\label{minSommeAn}
\Sigma_1
\geq \sum_{m=0}^{m_r(g-1)-1}\left (\sum_{i=0}^{r-1}
\left (
\begin{array}{c}
B_1+i-1\\
B_1-1
\end{array}
\right )\right)
\left (
\begin{array}{c}
B_r+m-1\\
B_r-1
\end{array}
\right )+
\end{equation}

$$
\left (
\begin{array}{c}
B_r+m_r(g-1)-1\\
B_r-1
\end{array}
\right )
\sum_{i=0}^{s_r(g-1)}
\left (
\begin{array}{c}
B_1+i-1\\
B_1-1
\end{array}
\right )
$$
which gives the following lemma:
\begin{lemme}\label{lem1}
Suppose $B_1>0$ and $B_r>0$, then 
we have the following lower bound on the sum $\Sigma_1=\sum_{n=0}^{g-1}A_n$:
\begin{equation}\label{majsom1r}
\Sigma_1
\geq K_1(r-1,B_1) 
\left (
\begin{array}{c}
 B_r+m_r(g-1)-1\\
 B_r
\end{array}
\right )+
\end{equation}
$$
K_1\left(\strut s_r(g-1),B_1\right)
\left (
\begin{array}{c}
 B_r+m_r(g-1)-1\\
 B_r-1
\end{array}
\right )
$$
where $$K_1(i,B)= 
\left (
\begin{array}{c}
 B+i\\
 B
\end{array}
\right )
.$$
\end{lemme}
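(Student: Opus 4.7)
The plan is to apply the elementary pointwise bound from Proposition \ref{minA_n} to every term $A_n$ in $\Sigma_1$, and then to reorganize the resulting double sum so that the hockey-stick identity collapses each of the two pieces into the binomial coefficients that appear in the statement.

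First I would write
\begin{equation*}
\Sigma_1 \;\geq\; \sum_{n=0}^{g-1} \binom{B_r+m_r(n)-1}{B_r-1}\binom{B_1+s_r(n)-1}{B_1-1},
\end{equation*}
and partition the range $0\le n\le g-1$ according to the quotient $m=m_r(n)$. For each $m$ in $\{0,1,\dots,m_r(g-1)-1\}$, the index $n$ takes all $r$ values in $\{mr,mr+1,\dots,mr+r-1\}$, so $s_r(n)$ runs through $0,1,\dots,r-1$; for $m=m_r(g-1)$, the index $n$ runs only from $m_r(g-1)\,r$ up to $g-1$, so $s_r(n)$ covers just $0,1,\dots,s_r(g-1)$. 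This regrouping yields the expression displayed in equation (\ref{minSommeAn}).

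Next I would evaluate the three remaining partial sums by the hockey-stick identity $\sum_{j=0}^{k}\binom{B-1+j}{B-1}=\binom{B+k}{B}$. Applied to the $B_1$-sums this gives
\begin{equation*}
\sum_{i=0}^{r-1}\binom{B_1+i-1}{B_1-1}=\binom{B_1+r-1}{B_1}=K_1(r-1,B_1),
\end{equation*}
and similarly $\sum_{i=0}^{s_r(g-1)}\binom{B_1+i-1}{B_1-1}=K_1(s_r(g-1),B_1)$. Applied to the $B_r$-sum in the first term,
\begin{equation*}
\sum_{m=0}^{m_r(g-1)-1}\binom{B_r+m-1}{B_r-1}=\binom{B_r+m_r(g-1)-1}{B_r}.
\end{equation*}
Substituting these three evaluations into (\ref{minSommeAn}) produces precisely the inequality (\ref{majsom1r}) claimed in the lemma.

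There is no real obstacle here: the content is entirely combinatorial, and the only care needed is in the bookkeeping of the Euclidean division (checking that the last incomplete ``block'' $m=m_r(g-1)$ is treated separately and that $s_r(g-1)$ is included) and in applying the hockey-stick identity with the correct indices. The positivity hypotheses $B_1>0$ and $B_r>0$ are exactly what is needed in order for Proposition \ref{minA_n} to give a meaningful bound at each $n$.
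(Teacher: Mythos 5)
Your proposal is correct and follows essentially the same route as the paper: bound each $A_n$ by Proposition \ref{minA_n}, group the indices $n$ by the quotient $m_r(n)$ to obtain the intermediate inequality (\ref{minSommeAn}), and then collapse the three partial sums with the hockey-stick identity $\sum_{j=0}^{l}\binom{k+j}{k}=\binom{k+l+1}{k+1}$. The index bookkeeping and the three evaluations you give match the paper's proof exactly.
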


\begin{proof}
It is sufficient to apply three times to the second member of 
the inequality (\ref{minSommeAn}) the following combinational formula:
for all the integers $k$ and $l$,
$$\sum_{j=0}^{l}\left (
\begin{array}{c}
 k+j\\
 k
\end{array}
\right )=\left (
\begin{array}{c}
 k+l+1\\
 k+1
\end{array}
\right ).$$
We obtain 
$$
\Sigma_1=\sum_{n=0}^{g-1}A_n 
\geq 
\left (
\begin{array}{c}
 B_1+r-1\\
 B_1
\end{array}
\right ) 
\left (
\begin{array}{c}
 B_r+m_r(g-1)-1\\
 B_r
\end{array}
\right )+
$$
$$
\left (
\begin{array}{c}
 B_r+m_r(g-1)-1\\
 B_r-1
\end{array}
\right )
\left (
\begin{array}{c}
 B_1+s_r(g-1)\\
 B_1
\end{array}
\right ).
$$
\end{proof}

\begin{remarque} 
Note that for any $B_1 \geq 1$ we have $K_1(r-1,B_1) \geq B_1+r-1$, the value $r$ being reached for  
$B_1=1$.
\end{remarque} 

\begin{remarque} 
 If $r=1$ then 
$$m_r(n)=n,~ s_r(n)=0,~ K_1(r-1,B_1)=1,~ K_1\left(\strut s_r(g-1),B_1\right)=1.$$ 
We conclude in this case that:
$$\Sigma_1=\sum_{n=0}^{g-1}A_n \geq 
\left (
\begin{array}{c}
 B_1+g-1\\
 B_1
\end{array}
\right )=
\left (
\begin{array}{c}
 B_1+g-1\\
g-1
\end{array}
\right )=K_1(g-1,B_1).
$$
\end{remarque} 

\subsection{Lower bound on the sum $\Sigma_2$}
 
Now let us study the second sum:
$\Sigma_2=q^{g-1}\sum_{n=0}^{g-2}\frac{A_n}{q^n}$. 
We have:

\begin{equation}\label{S2}
\Sigma_2\geq q^{g-1}\left( \sum_{i=0}^{r-1}\frac{1}{q^{i}}
\left (
\begin{array}{c}
 B_1+i-1\\
 B_1-1
\end{array}
\right )
\right )
\sum_{m=0}^{m_r(g-2)-1} \frac{1}{q^{mr}} 
\left (
\begin{array}{c}
 B_r+m-1\\
 B_r-1
\end{array}
\right )
+
\end{equation}

$$
q^{s_r(g-2)+1}
\left (
\begin{array}{c}
 B_r+m_r(g-2)-1\\
 B_r-1
\end{array}
\right )
\sum_{i=0}^{s_r(g-2)} \frac{1}{q^{i}} 
\left (
\begin{array}{c}
 B_1+i-1\\
 B_1-1
\end{array}
\right ).
$$
In order to give a lower bound on $\Sigma_2$, 
we need to study more precisely the following quantity $Q_r$ which occurs in the previous inequality 
(\ref{S2}):
$$Q_r=\sum_{m=0}^{m_r(g-2)-1} \frac{1}{(q^{r})^m}
\left (
\begin{array}{c}
B_r+m-1\\
 B_r-1
\end{array}
\right )
.$$ 

The value $Q_r$ depends on the parameters $q$, $g$, $r$ and $B_r$.
We give several lower bounds whose accuracy depends on the ranges in which the parameters vary.
Remark that for $g=2$ and $g=3$ we have respectively $Q_r=0$ and $Q_r=1$. 
These lower bounds are specified in the following three lemmas.
Then we can suppose 
in the study of $Q_r$ that
$g>3$.  

\begin{lemme}\label{minoqr}
Let $q$ and $r$ be two integers $>0$ and let $g > 3$ be an integer.
Let $Q_r$ be the sum defined by:
$$Q_r=\sum_{m=0}^{m_r(g-2)-1} \frac{1}{(q^{r})^m}
\left (
\begin{array}{c}
B_r+m-1\\
 B_r-1
\end{array}
\right )
,$$ 
where $m_r(n)$ denotes the quotient of the Euclidian division of $n$ by $r$.
Let us set 
$$f_r=
\left\{
\begin{array}{lcl}
0 & \hbox{ if } & \frac{g-1}{2}<r\leq g-1;\\
1 & \hbox{ if } & r\leq\frac{g-1}{2} \hbox{ and } B_r<q^r;\\
\min\left(\lfloor \frac{B_r-q^r}{q^r-1} \rfloor+1,m_r(g-2)-1\right) & \hbox{ if } 
& r\leq \frac{g-1}{2} \hbox{ and } B_r\geq q^r;
\end{array}
\right.
$$
then  $$Q_r\geq \frac{q^{rm_r(g-2)}-1}{q^{r(m_r(g-2)-1)}(q^r-1)}+\frac{(B_r-1)}{q^r}f_r.$$
\end{lemme}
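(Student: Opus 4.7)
The plan is to decompose $Q_r$ additively into a geometric part plus a ``surplus,'' by writing $\binom{B_r+m-1}{m} = 1 + \bigl(\binom{B_r+m-1}{m} - 1\bigr)$. Summing the constant $1$ across $m=0,\dots,M-1$ (where $M = m_r(g-2)$) yields the geometric series $\sum_{m=0}^{M-1}q^{-rm}$, which evaluates in closed form to $(q^{rM}-1)/(q^{r(M-1)}(q^r-1))$, matching the first term of the target bound exactly. It then remains to bound the surplus $\sum_{m=0}^{M-1}q^{-rm}\bigl(\binom{B_r+m-1}{m}-1\bigr)$ from below by $(B_r-1)f_r/q^r$.

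Next I would dispose of the trivial regimes. When $r>(g-1)/2$ one has $M\leq 1$, so the surplus sum contains only the (zero) $m=0$ term; this matches $f_r=0$. For $r\leq (g-1)/2$ there is at least an $m=1$ term, and $\binom{B_r}{1}-1 = B_r-1$ alone contributes $(B_r-1)/q^r$, which handles the case $B_r<q^r$ (where $f_r=1$). The substantive range is $B_r\geq q^r$ with $f_r\geq 2$, where one must extract a contribution of $(B_r-1)/q^r$ from each of the first $f_r$ surplus terms.

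To do so, I would establish by induction on $m$ the strengthened inequality
$$\binom{B_r+m-1}{m} \;\geq\; 1 + (B_r-1)\,q^{r(m-1)} \qquad\text{for every } 1\leq m\leq f_r.$$
The base $m=1$ is an equality. For the inductive step I would use the identity $\binom{B_r+m}{m+1} = \frac{B_r+m}{m+1}\binom{B_r+m-1}{m}$ together with the induction hypothesis; after clearing denominators the target reduces to $(B_r-1)\bigl(1 + q^{r(m-1)}(B_r+m-(m+1)q^r)\bigr)\geq 0$, and non-negativity follows once $B_r+m\geq (m+1)q^r$, i.e.\ $B_r\geq (m+1)(q^r-1)+1$---which is precisely the condition $m+1\leq f_r$ extracted from the definition of $f_r$. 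Summing the resulting $q^{-rm}(B_r-1)q^{r(m-1)} = (B_r-1)/q^r$ over $1\leq m\leq f_r$ (and discarding the nonnegative $m>f_r$ terms) delivers the required surplus $f_r(B_r-1)/q^r$.

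The main obstacle is spotting the correct strengthened form: the naive lower bound $\binom{B_r+m-1}{m}\geq B_r$ for $m\geq 1$ is far too weak to supply a $q^{r(m-1)}$ factor, and it is identifying the precise range $m\leq f_r$ in which the strengthened inequality survives that dictates the piecewise definition of $f_r$ appearing in the statement.
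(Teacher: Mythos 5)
Your proposal is correct and follows essentially the same route as the paper: the same split of $Q_r$ into the geometric series plus a surplus, and the same key fact that the surplus terms are nondecreasing up to index $f_r$ (the paper obtains this from the ratio $u_{m+1}/u_m=\frac{B_r+m}{(m+1)q^r}$ of the terms $u_m=q^{-rm}\binom{B_r+m-1}{B_r-1}$, and the condition $B_r+m\geq (m+1)q^r$ making that ratio $\geq 1$ is exactly the inequality driving your induction). The only caveat, which your write-up shares with the lemma's own statement, is the boundary case $g$ odd and $r=(g-1)/2$, where $m_r(g-2)=1$ so the $m=1$ surplus term does not actually appear in the sum; the paper's proof silently works with the threshold $(g-2)/2$ rather than the $(g-1)/2$ used in the definition of $f_r$, so your argument is neither better nor worse off than the original on this point.
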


\begin{proof}
Let us write 
$$Q_r= \sum_{m=0}^{m_r(g-2)-1} \frac{1}{(q^{r})^m}+ \Delta_r,$$
where 
$$\Delta_r= \sum_{m=0}^{m_r(g-2)-1} \frac{1}{(q^{r})^m}
\left[\left (
\begin{array}{c}
B_r+m-1\\
 B_r-1
\end{array}
\right ) -1 \right]
.$$
Then
$$Q_r= \frac{q^{rm_r(g-2)}-1}{q^{r(m_r(g-2)-1)}(q^r-1)} +\Delta_r.$$
First, if $\frac{g-2}{2}<r \leq g-2$
then $m_r(g-2)=0$ or $m_r(g-2)=1$, hence $\Delta_r=0$. Let us suppose now that $r \leq \frac{g-2}{2}$, then
$m_r(g-2)\geq 2$. Hence, the sum $\Delta_r$ has at least the two first terms, namely $0$ (for $m=0$) and 
$B_r-1$ (for $m=1$). Let us set for any $n\geq 0$
$$u_n=\frac{1}{q^{rn}}
\left (
\begin{array}{c}
 B_r+n-1\\
 B_r-1
\end{array}
\right )
,$$
and compute:
$$\frac{u_{n+1}}{u_n}=\frac{1}{q^r}\frac{B_r+n}{n+1}.$$
Let us set $n_0=\lfloor \frac{B_r-q^r}{q^r-1}\rfloor +1$. Then, if
$0 \leq n <n_0$
we have $u_{n+1}-1 \geq u_n -1$.
If $B_r<q^r$, then $n_0\leq 1$. Let us give in this case a lower bound on the sum $\Delta_r$ by 
considering only the sum of the two first terms
i.e $(B_r-1)/q^r$. If $B_r \geq q^r $, then $n_0 \geq 1$. If we set $f_r=\min\left (\strut n_0,m_r(g-2)-1\right)$,
the sequence $u_n-1$ is increasing from the term of index $1$ 
(which is equal to $(B_r-1)/q^r$) until the term of index $f_r$. Hence
$$\Delta_r \geq \frac{(B_r-1)}{q^r}f_r.$$
\end{proof}

\begin{remarque}
Note that when the genus $g$ is growing to infinity and when $B_r>q^r$, then 
$f_r=\left\lfloor \frac{B_r-q^r}{q^r-1}\right\rfloor+1$ for $g$ sufficiently large,
by the Drinfeld-Vladut bound. 
\end{remarque}

\begin{lemme}\label{minoqr1}
Let $q$ and $r$ be two integers $>0$ and let $g>3$ be an integer.
Let $Q_r$ be the sum defined by:
$$Q_r=\sum_{m=0}^{m_r(g-2)-1} \frac{1}{(q^{r})^m}
\left (
\begin{array}{c}
B_r+m-1\\
 B_r-1
\end{array}
\right )
,$$ 
where $m_r(n)$ denotes the quotient of the Euclidian division of $n$ by $r$.
We have the following lower bound
$$Q_r\geq 
\left\{
\begin{array}{ll}
\left(\frac{q^r}{q^r-1} \right)^{B_r-1} & \hbox{ if } B_r \leq m_r(g-2),\\
\left(1+\frac{B_r}{q^r(m_r(g-2)-1)} \right)^{m_r(g-2)-1} &
\hbox{ if } B_r > m_r(g-2).
\end{array}
\right .
$$ 
\end{lemme}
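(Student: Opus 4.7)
I would treat the two cases separately.

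For the case $B_r > m_r(g-2)$, write $M = m_r(g-2)$. The natural estimate is
\[\binom{B_r+m-1}{m}=\prod_{k=1}^m\frac{B_r+k-1}{k}\ge\frac{B_r^m}{m!},\]
obtained from the fact that $B_r+k-1\ge B_r$ for $k\ge 1$. This gives $Q_r\ge\sum_{m=0}^{M-1}\frac{1}{m!}(B_r/q^r)^m$. To compare with the target $\bigl(1+B_r/(q^r(M-1))\bigr)^{M-1}$, I would expand the latter by the binomial theorem and use the term-by-term inequality $\binom{M-1}{m}/(M-1)^m\le 1/m!$, which holds since each factor $(M-k)/(M-1)$ in the numerator of the left-hand side is at most $1$.

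For the case $B_r\le m_r(g-2)$, set $n=B_r-1$ and $Q=q^r\ge 2$. Since $M-1\ge n$ and the summands are nonnegative, truncation reduces the claim to
\[P_n(Q):=\sum_{m=0}^n\binom{n+m}{n}Q^{-m}\ \ge\ \Bigl(\frac{Q}{Q-1}\Bigr)^n.\]
I would recognize $P_n(Q)$ as the partial sum (first $n+1$ terms) of the negative binomial series $(1-1/Q)^{-(n+1)}=(Q/(Q-1))^{n+1}$, and evaluate the tail $\sum_{m\ge n+1}\binom{n+m}{n}Q^{-m}$ by a standard generating-function convolution (extracting the coefficient of $z^n$ in $(1+z)^{2n+1}/(1-z/(Q-1))$). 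This yields
\[P_n(Q)=\Bigl(\frac{Q}{Q-1}\Bigr)^{n+1}-\frac{1}{Q^n(Q-1)^{n+1}}\sum_{i=0}^n\binom{2n+1}{i}(Q-1)^i,\]
and after clearing denominators the desired inequality becomes
\[\sum_{i=0}^n\binom{2n+1}{i}(Q-1)^i\ \le\ Q^{2n}.\tag{$\ast$}\]

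To prove $(\ast)$, I would start from $Q^{2n+1}=\bigl((Q-1)+1\bigr)^{2n+1}=\sum_{i=0}^{2n+1}\binom{2n+1}{i}(Q-1)^i$, split at $i=n$, and re-index the upper half via $i\mapsto 2n+1-i$ using the symmetry $\binom{2n+1}{i}=\binom{2n+1}{2n+1-i}$, obtaining $\sum_{i=0}^n\binom{2n+1}{i}\bigl[(Q-1)^i+(Q-1)^{2n+1-i}\bigr]=Q^{2n+1}$. For $i\le n$ the exponent $2n+1-2i\ge 1$ and $Q-1\ge 1$ give $(Q-1)^{2n+1-i}\ge(Q-1)(Q-1)^i$, so $(Q-1)^i+(Q-1)^{2n+1-i}\ge Q(Q-1)^i$. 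Summing these pairwise bounds yields $Q\cdot S_1\le Q^{2n+1}$, where $S_1$ denotes the left-hand side of $(\ast)$; this is $(\ast)$.

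The main obstacle is the reduction to $(\ast)$ in the case $B_r\le m_r(g-2)$, which hinges on the closed form for $P_n(Q)$ and a nontrivial manipulation with generating functions; once $(\ast)$ is isolated, the symmetry/pairing argument is short and elementary, and in fact exhibits the equality case at $Q=2$.
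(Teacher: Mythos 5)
Your proof is correct, and it diverges from the paper's in an instructive way. Write $M=m_r(g-2)$. For the case $B_r>M$ the paper first transforms the whole sum via its identity (\ref{sec1:eq3}), obtaining $Q_r=\bigl(\tfrac{q^r-1}{q^r}\bigr)^{M-1}\sum_{j=0}^{M-1}\binom{M+B_r-1}{j}(q^r-1)^{-j}$, and then bounds the ratio $\binom{M+B_r-1}{j}/\binom{M-1}{j}$ from below; your term-by-term comparison $\binom{B_r+m-1}{m}q^{-rm}\ge \tfrac{1}{m!}(B_r/q^r)^m\ge\binom{M-1}{m}\bigl(\tfrac{B_r}{q^r(M-1)}\bigr)^m$ reaches the same conclusion without the identity and is more elementary. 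For the case $B_r\le M$ your closed form for $P_n(Q)$ is exactly the identity (\ref{sec1:eq3}) specialized to $N=2n$, $k=n$ and rewritten through the symmetry $\binom{2n+1}{i}=\binom{2n+1}{2n+1-i}$, so the two arguments rest on the same algebraic fact; the differences are that you truncate $Q_r$ to its first $B_r$ terms at the outset (harmless, since $B_r-1\le M-1$), and that you isolate the explicit inequality $\sum_{i=0}^n\binom{2n+1}{i}(Q-1)^i\le Q^{2n}$ and prove it by pairing $i$ with $2n+1-i$. The paper's corresponding step is the asserted but unproved inequality between $\sum_{j=0}^{s}\binom{m_r(g-2)+B_r-1}{j}(q^r-1)^{-j}$ and the weighted upper half of the binomial sum, which is the same pairing idea in disguise; your version actually supplies that proof and exhibits the equality case $q^r=2$. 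Both routes use only $q^r\ge2$, which always holds, so the argument is complete.
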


\begin{proof}
Let $N$ and $k$ be two integers such that $N \geq k \geq 0$.
Let us study the following sum:
\begin{equation}\label{sec1:eq1}
 Q(N,k,x)=\sum_{i=0}^{N-k}\left( \begin{array}{c} k+i\\ k\end{array}\right)x^i.
\end{equation}
By derivating $k$ times the classical equality
$$\sum_{i=0}^N x^i=\frac{1-x^{N+1}}{1-x},$$
the following formula holds:
\begin{equation}\label{sec1:eq2}
Q(N,k,x)=\frac{1}{(1-x)^{k+1}}-\frac{x^{N-k+1}}{1-x}
\sum_{i=0}^{k}\left( \begin{array}{c} N+1\\ k-i\end{array}\right)\left(\frac{x}{1-x} \right)^i.
\end{equation}
Let us set $j=N+1-k+i$ in the previous formula,
then 
$$Q(N,k,x)=\frac{1}{(1-x)^{k+1}}-(1-x)^{N-k}
\sum_{j=N+1-k}^{N+1}\left( \begin{array}{c} N+1\\ j\end{array}\right)\left(\frac{x}{1-x} \right)^{j}. $$
Then
$$Q(N,k,x)=\frac{1}{(1-x)^{k+1}}-(1-x)^{N-k}
\sum_{j=0}^{N+1}\left( \begin{array}{c} N+1\\ j\end{array}\right)\left(\frac{x}{1-x} \right)^{j}+$$
$$
(1-x)^{N-k}\sum_{j=0}^{N-k}\left( \begin{array}{c} N+1\\ j\end{array}\right)\left(\frac{x}{1-x} \right)^{j},$$
$$Q(N,k,x)=\frac{1}{(1-x)^{k+1}}-(1-x)^{N-k}\left(1+\frac{x}{1-x} \right)^{N+1}+$$
$$(1-x)^{N-k}\sum_{j=0}^{N-k}\left( \begin{array}{c} N+1\\ j\end{array}\right)\left(\frac{x}{1-x} \right)^{j},$$
\begin{equation}\label{sec1:eq3}
Q(N,k,x)=(1-x)^{N-k} \sum_{j=0}^{N-k}\left( \begin{array}{c} N+1\\ j\end{array}\right)
\left(\frac{x}{1-x} \right)^j.
\end{equation}
%
%
Let us remark that $Q_r=Q\left(\strut m_r(g-2)+B_r-2,B_r-1,1/q^r\right)$. From
Formula (\ref{sec1:eq3}) we obtain
$$Q_r=\left(\frac{q^r-1}{q^r}\right)^{m_r(g-2)-1}\left( \sum_{j=0}^{m_r(g-2)-1}
\left( \begin{array}{c} m_r(g-2)+B_r-1\\ j\end{array}\right)
\left(\frac{1}{q^r-1} \right)^j\right).$$
If $B_r \leq m_r(g-2)$ then $m_r(g-2) \geq \frac{1}{2}\left(\strut m_r(g-2)+B_r \right)$.
Let us set $$s= \left\lfloor \frac{1}{2}\left(\strut m_r(g-2)+B_r-1 \right)\right\rfloor.$$
Let us remark that
$$\sum_{j=0}^s 
\left( \begin{array}{c} m_r(g-2)+B_r-1\\ j\end{array}\right)
\left(\frac{1}{q^r-1} \right)^j+$$
$$\left(\frac{1}{q^r-1} \right)\left(\sum_{j=s+1}^{m_r(g-2)+B_r-1}
\left( \begin{array}{c} m_r(g-2)+B_r-1\\ j\end{array}\right)
\left(\frac{1}{q^r-1} \right)^{j-1}\right)= $$
$$\left(\frac{q^r}{q^r-1}\right)^{m_r(g-2)+B_r-1}.$$
But
$$ \sum_{j=0}^s 
\left( \begin{array}{c} m_r(g-2)+B_r-1\\ j\end{array}\right)
\left(\frac{1}{q^r-1} \right)^j \geq$$
$$\sum_{s+1}^{m_r(g-2)+B_r-1}
\left( \begin{array}{c} m_r(g-2)+B_r-1\\ j\end{array}\right)
\left(\frac{1}{q^r-1} \right)^{j-1}.$$
Then
$$\sum_{j=0}^s 
\left( \begin{array}{c} m_r(g-2)+B_r-1\\ j\end{array}\right)
\left(\frac{1}{q^r-1} \right)^j \geq$$
$$ \frac{q^r-1}{q^r}\left(\frac{q^r}{q^r-1}\right)^{m_r(g-2)+B_r-1}.$$
Therefore
$$Q_r \geq  \left(\frac{q^r-1}{q^r}\right)^{m_r(g-2)-1}\frac{q^r-1}{q^r}
\left(\frac{q^r}{q^r-1}\right)^{m_r(g-2)+B_r-1},$$
$$Q_r \geq \left(\frac{q^r}{q^r-1} \right)^{B_r-1}. $$
If $B_r > m_r(g-2)$, then 

$$\frac{ \left( \begin{array}{c} m_r(g-2)+B_r-1\\ j\end{array}\right)}
{\left( \begin{array}{c} m_r(g-2)-1\\ j\end{array}\right)}=
\prod_{i=0}^{j-1}\frac{m_r(g-2)+B_r-1-i}{m_r(g-2)-1-i}\geq$$
$$
\left(\frac{m_r(g-2)+B_r-1}{m_r(g-2)-1}\right)^j.$$

Then
$$\sum_{j=0}^{m_r(g-2)-1}
\left( \begin{array}{c} m_r(g-2)+B_r-1\\ j\end{array}\right)
\left(\frac{1}{q^r-1} \right)^j\geq$$
$$
\sum_{j=0}^{m_r(g-2)-1}
\left( \begin{array}{c} m_r(g-2)-1\\ j\end{array}\right)
\left[\left(\frac{1}{q^r-1} \right)\left(\frac{m_r(g-2)+B_r-1}{m_r(g-2)-1}\right)\right]^j \geq$$
$$\left(1+\frac{m_r(g-2)+B_r-1}{(q^r-1)(m_r(g-2)-1)}\right)^{m_r(g-2)-1}.$$
Therefore
$$Q_r \geq \left(\frac{q^r-1}{q^r}\right)^{m_r(g-2)-1} 
\left(1+\frac{m_r(g-2)+B_r-1}{(q^r-1)(m_r(g-2)-1)}\right)^{m_r(g-2)-1}\geq$$
$$\left(1+\frac{B_r}{q^r(m_r(g-2)-1)} \right)^{m_r(g-2)-1}.
$$
\end{proof} 

\begin{lemme}\label{minoqr2}
Let $q$ and $r$ be two integers $>0$ and let $g>3$ be an integer.
Let $Q_r$ be the sum defined by:
$$Q_r=\sum_{m=0}^{m_r(g-2)-1} \frac{1}{(q^{r})^m}
\left (
\begin{array}{c}
B_r+m-1\\
 B_r-1
\end{array}
\right )
,$$ 
where $m_r(n)$ denotes the quotient of the Euclidian division of $n$ by $r$.
If $B_r+1 \leq (m_r(g-2)-1)(q^r-1)$ we get the following lower bound
$$Q_r\geq 
\left(\frac{q^r}{q^r-1} \right)^{B_r} - B_r \left (\begin{array}{c}
B_r+m_r(g-2)-1\\
 B_r
\end{array}
\right )\left(\frac{1}{q^r}\right)^{m_r(g-2)}.
$$
\end{lemme}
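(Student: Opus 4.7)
The plan is to identify $Q_r$ with a truncation of the negative binomial series, and then bound the truncated tail with a geometric-series argument.

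First, I would use the classical generating function identity
$$\sum_{m=0}^{\infty}\binom{B_r+m-1}{B_r-1}y^m=\frac{1}{(1-y)^{B_r}},$$
specialized at $y=1/q^r$, so that the full infinite series equals $\left(\frac{q^r}{q^r-1}\right)^{B_r}$. Setting $M=m_r(g-2)$, this lets me write
$$Q_r=\left(\frac{q^r}{q^r-1}\right)^{B_r}-T,\qquad T=\sum_{m=M}^{\infty}a_m,\quad a_m=\binom{B_r+m-1}{B_r-1}q^{-rm}.$$
So it suffices to show $T\leq B_r\binom{B_r+M-1}{B_r}q^{-rM}$.

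Next, I would control $T$ by the ratio of successive terms:
$$\frac{a_{m+1}}{a_m}=\frac{B_r+m}{(m+1)q^r}.$$
Viewed as a function of $m$, this ratio is non-increasing (its derivative has the sign of $1-B_r\leq 0$), so for every $m\geq M$ it is bounded by the value at $m=M$. The hypothesis $B_r+1\leq (M-1)(q^r-1)$ rearranges to $B_r+M\leq (M-1)q^r$, whence
$$\frac{a_{m+1}}{a_m}\leq\frac{M-1}{M+1}\qquad(m\geq M).$$
The geometric-series bound then yields $T\leq a_M\cdot\dfrac{M+1}{2}$.

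Finally, observe that the hypothesis forces $M\geq 2$ (otherwise $(M-1)(q^r-1)\leq 0<B_r+1$), so $\frac{M+1}{2}\leq M$. Combining this with the elementary combinatorial identity
$$M\binom{B_r+M-1}{B_r-1}=B_r\binom{B_r+M-1}{B_r},$$
one gets $T\leq M\, a_M=B_r\binom{B_r+M-1}{B_r}q^{-rM}$, which is exactly the claimed bound. The only subtle point is matching constants: the hypothesis $B_r+1\leq(M-1)(q^r-1)$ is calibrated precisely so that the common ratio controlling the tail is $(M-1)/(M+1)$, giving a clean geometric sum of $(M+1)/2$, which in turn is absorbed by $M$ thanks to $M\geq 2$.
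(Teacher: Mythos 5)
Your proposal is correct, and it bounds the tail by a genuinely different argument than the paper. The decomposition is the same in both cases: $Q_r$ is the truncation of the negative binomial series $\sum_{m\geq 0}\binom{B_r+m-1}{B_r-1}q^{-rm}=\bigl(\tfrac{q^r}{q^r-1}\bigr)^{B_r}$, so everything reduces to showing that the tail starting at $M=m_r(g-2)$ is at most $B_r\binom{B_r+M-1}{B_r}q^{-rM}$. The paper gets this by writing the tail as the Taylor integral remainder of $S_r(X)=(1-X)^{-B_r}$ at order $M$ and invoking Lemma \ref{remindermaj}, whose content is that the integrand $t\mapsto (\tfrac{1}{q^r}-t)^{M-1}/(1-t)^{B_r+M}$ is decreasing on $[0,1/q^r]$; the hypothesis $B_r+1\leq (M-1)(q^r-1)$ is exactly the condition making the critical point $t_0$ nonpositive, so the integral is bounded by $q^{-r}$ times the value at $t=0$. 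You instead bound the tail termwise: the ratio $a_{m+1}/a_m=\frac{B_r+m}{(m+1)q^r}$ is non-increasing in $m$ (here you tacitly use $B_r\geq 1$, which is the standing assumption of the section), the hypothesis rearranges to $B_r+M\leq (M-1)q^r$ so the ratio is at most $\frac{M-1}{M+1}$ for $m\geq M$, and the geometric sum gives $T\leq \frac{M+1}{2}a_M\leq M a_M=B_r\binom{B_r+M-1}{B_r}q^{-rM}$, using $M\geq 2$ (forced by the hypothesis) and the identity $M\binom{B_r+M-1}{B_r-1}=B_r\binom{B_r+M-1}{B_r}$. All of these steps check out. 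What your route buys is self-containedness and elementarity: no differentiation of the generating function, no integral remainder, and no forward reference to a lemma stated in the asymptotic setting (the paper's Lemma \ref{remindermaj} is phrased ``for $k\geq k_2$'', so quoting it in this fixed-field lemma requires the reader to notice that the hypothesis here plays the role of that asymptotic condition). What the paper's route buys is economy later on: the integral-remainder estimate is exactly what is reused in Section \ref{sec:asymptotic} to prove Lemma \ref{remainder1} and Proposition \ref{somme2}, so proving it once serves both purposes.
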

\begin{proof}
The term $\left(\frac{q^r}{q^r-1} \right)^{B_r}$ is the value of the infinite sum
$$\sum_{m=0}^{\infty} \frac{1}{(q^{r})^m}
\left (
\begin{array}{c}
B_r+m-1\\
 B_r-1
\end{array}
\right ).
$$
As proved later in Lemma \ref{remindermaj} the term 
$$B_r \left (\begin{array}{c}
B_r+m_r(g-2)-1\\
 B_r
\end{array}
\right )\left(\frac{1}{q^r}\right)^{m_r(g-2)}$$
is an upper bound of the remainder.
\end{proof}

\section{Lower bounds on the class number}\label{sec:class}
In this section, using the lower bounds obtained in the previous section, 
we derive lower bounds on the class number $h=h(F/{\mathbb{F}}_q)$ of an algebraic function field
$F/{\mathbb{F}}_q$.
Let $B_r=B_r(F/{\mathbb{F}}_q)$ be the number of places of degree $r$.
\begin{theoreme}\label{finite} 
Let $F/{\mathbb{F}}_q$ be an algebraic function field defined over ${\mathbb{F}}_q$ of genus $g\geq 2$ and 
$h$ the class number of $F/{\mathbb{F}}_q$. Suppose that the numbers $B_1$ and $B_r$ of places of 
degree respectively $1$ and $r$
are such that $B_1>0$ and $B_r>0$. Let us denote by 
$K_1(i,B)$ and $K_2(q,j,B)$ 
the following numbers:
$$K_1(i,B)= 
\left (
\begin{array}{c}
 B+i\\
 B
\end{array}
\right )
\quad \hbox{and}\quad
K_2(q,j,B)= \sum_{i=0}^{j}\frac{1}{q^{i}}
\left (
\begin{array}{c}
B+i-1\\
 B-1
\end{array}
\right )
.$$
Then, the following inequality holds:
\begin{equation}\label{minh}
h \geq \frac{(q-1)^2 K_2(q,r-1,B_1)}{(g+1)(q+1)-B_1}  q^{g-1} 
Q_r  +
\end{equation}
$$
\frac{q(q-1)^2K_2\left(\strut q,s_r(g-2),B_1\right)}{(g+1)(q+1)-B_1}\left (
\begin{array}{c}
 B_r+m_r(g-2)-1\\
 B_r-1
\end{array}
\right )+
$$
$$ 
\frac{(q-1)^2K_1(r-1,B_1)}{ \strut(g+1)(q+1)-B_1}
\left (
\begin{array}{c}
 B_r+m_r(g-1)-1\\
 B_r
\end{array}
\right )+
$$
$$
\frac{(q-1)^2K_1\left(\strut s_r(g-1),B_1\right )}{ \strut(g+1)(q+1)-B_1}
\left (
\begin{array}{c}
 B_r+m_r(g-1)-1\\
 B_r-1
\end{array}
\right )
$$
where $Q_r$ can be bounded by any of the three lemmas \ref{minoqr}, \ref{minoqr1} and \ref{minoqr2}.
\end{theoreme}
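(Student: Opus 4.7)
The plan is to assemble the lower bound on $h$ directly from the ingredients already set up in the paper. The starting point is the Lachaud--Martin-Deschamps identity~(\ref{mainformula}), namely $S(F/\mathbb{F}_q)=hR(F/\mathbb{F}_q)$, combined with the upper bound~(\ref{R2}) on $R(F/\mathbb{F}_q)$, which together give
\[
h \;\geq\; \frac{(q-1)^2}{(g+1)(q+1)-B_1}\,\bigl(\Sigma_1+\Sigma_2\bigr).
\]
It then suffices to bound $\Sigma_1$ and $\Sigma_2$ from below and to collect terms.

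For $\Sigma_1$, I would simply invoke Lemma~\ref{lem1}; the two summands appearing there, multiplied by the prefactor $(q-1)^2/[(g+1)(q+1)-B_1]$, produce precisely the third and fourth terms of~(\ref{minh}), since $K_1(i,B)$ is defined in the same way in both places.

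For $\Sigma_2$, I would start from the inequality~(\ref{S2}), in which the first double sum factorises as $K_2(q,r-1,B_1)\cdot Q_r$ after pulling the factor $q^{g-1}$ through (so $1/q^{mr}$ becomes the defining summand of $Q_r$ once the $q^{g-1}$ is written as $q^{g-1}$ in front). This immediately yields the first term of~(\ref{minh}), with $Q_r$ estimated by any one of Lemmas~\ref{minoqr}, \ref{minoqr1}, or~\ref{minoqr2}. For the second piece of~(\ref{S2}), which carries the factor $q^{s_r(g-2)+1}$ together with $\binom{B_r+m_r(g-2)-1}{B_r-1}$ and $K_2(q,s_r(g-2),B_1)$, I would use the crude but clean estimate $q^{s_r(g-2)+1}\geq q$ (valid since $s_r(g-2)\geq 0$); this gives exactly the second term of the theorem. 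Multiplying by the common prefactor produces the announced inequality.

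The proof is essentially a bookkeeping argument: every analytic step has already been done in Section~\ref{sec:lower} and in the discussion of $R(F/\mathbb{F}_q)$. There is no real obstacle, only the need to match notations carefully, in particular to check that $K_1$ and $K_2$ as defined in the theorem statement are consistent with their use inside Lemma~\ref{lem1} and inequality~(\ref{S2}), and to notice that the slight weakening $q^{s_r(g-2)+1}\geq q$ is what yields the clean factor $q$ in the second term of~(\ref{minh}). The flexibility in choosing among Lemmas~\ref{minoqr}, \ref{minoqr1} and~\ref{minoqr2} to bound $Q_r$ is simply left to the user, depending on the regime of $q$, $r$, $g$ and $B_r$ under consideration.
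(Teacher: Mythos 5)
Your proposal is correct and follows essentially the same route as the paper: combine the identity $S(F/\mathbb{F}_q)=hR(F/\mathbb{F}_q)$ with the upper bound (\ref{R2}) on $R$, then lower-bound $\Sigma_1$ by Lemma \ref{lem1} and $\Sigma_2$ by (\ref{S2}) together with the $Q_r$ lemmas. You even make explicit the one step the paper leaves implicit, namely the weakening $q^{s_r(g-2)+1}\geq q$ that produces the clean factor $q$ in the second term of (\ref{minh}).
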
 

\begin{proof}
The result is a direct consequence of the equality (\ref{mainformula}) 
which gives an expression of $h$, the upper bound (\ref{R2}), the lower bound (\ref{majsom1r})
and Lemmas \ref{minoqr}, \ref{minoqr1} and \ref{minoqr2}. 
\end{proof}

Let us give some simplified formulas which are slightly less accurate but more readable.

\begin{corollaire}
Let $F/{\mathbb{F}}_q$ be an algebraic function field defined over ${\mathbb{F}}_q$ of genus $g\geq 2$ and 
$h$ the class number of $F/{\mathbb{F}}_q$. Suppose that the numbers $B_1$ and $B_r$ of places of 
degree respectively $1$ and $r$
are such that $B_1>0$ and $B_r>0$. Then the following four results hold:
\begin{enumerate}
 \item Let us set 
$$f_r=
\left\{
\begin{array}{lcl}
0 & \hbox{ if } & \frac{g-1}{2}<r\leq g-1;\\
1 & \hbox{ if } & r\leq\frac{g-1}{2} \hbox{ and } B_r<q^r;\\
\min\left(\lfloor \frac{B_r-q^r}{q^r-1} \rfloor+1,m_r(g-2)-1\right) & \hbox{ if } 
& r\leq \frac{g-1}{2} \hbox{ and } B_r\geq q^r;
\end{array}
\right.
$$
then  $$h \geq \frac{(q-1)^2}{(g+1)(q+1)}q^{g-1}\left(\frac{q^{rm_r(g-2)}-1}{q^{r(m_r(g-2)-1)}(q^r-1)}+
\frac{(B_r-1)}{q^r}f_r\right);$$
 \item if $B_r\leq m_r(g-2)$ then
$$h \geq \frac{(q-1)^2}{(g+1)(q+1)}q^{g-1}\left(\frac{q^r}{q^r-1} \right)^{B_r-1};$$
 \item if $B_r > m_r(g-2)$ then
$$h \geq \frac{(q-1)^2}{(g+1)(q+1)}q^{g-1}\left(1+\frac{B_r}{q^r(m_r(g-2)-1)} \right)^{m_r(g-2)-1};$$
 \item if $B_r+1 \leq (m_r(g-2)-1)(q^r-1)$ then
$$h \geq \frac{(q-1)^2}{(g+1)(q+1)}q^{g-1}\left[\left(\frac{q^r}{q^r-1} \right)^{B_r} - B_r \left (\begin{array}{c}
B_r+m_r(g-2)-1\\
 B_r
\end{array}
\right )\left(\frac{1}{q^r}\right)^{m_r(g-2)}\right].$$
\end{enumerate}

\end{corollaire}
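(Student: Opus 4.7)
The plan is to derive each of the four inequalities as direct corollaries of Theorem \ref{finite}, treating the three lemmas \ref{minoqr}, \ref{minoqr1}, \ref{minoqr2} as a menu of bounds for the quantity $Q_r$ that appears in the first term of the theorem's bound. The key observation is that all four summands on the right-hand side of (\ref{minh}) are non-negative (they are products of binomial coefficients and positive real numbers), so a valid lower bound is obtained by discarding the last three summands and keeping only the first.

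Once reduced to the first summand, two further coarsenings are applied to reach the clean expressions stated in the corollary. First, the factor $K_2(q,r-1,B_1) = \sum_{i=0}^{r-1} q^{-i}\binom{B_1+i-1}{B_1-1}$ is lower-bounded by its $i=0$ term, which equals $1$; this replaces $K_2(q,r-1,B_1)$ by $1$. Second, since $B_1 \geq 0$, the denominator satisfies $(g+1)(q+1)-B_1 \leq (g+1)(q+1)$, so replacing it by $(g+1)(q+1)$ only decreases the ratio. After these two simplifications, the first summand of (\ref{minh}) becomes $\frac{(q-1)^2}{(g+1)(q+1)}\,q^{g-1}\,Q_r$, which is the common prefactor appearing in all four statements of the corollary.

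It then suffices to substitute, in each of the four cases, the appropriate lower bound for $Q_r$ provided by the earlier lemmas. Case (1) is obtained by plugging in the bound of Lemma \ref{minoqr}, which yields the geometric-sum term plus the $f_r$-correction with exactly the piecewise definition of $f_r$ stated in the corollary. Cases (2) and (3) correspond to the two branches of Lemma \ref{minoqr1}, applied respectively when $B_r \leq m_r(g-2)$ and when $B_r > m_r(g-2)$. Case (4) is obtained by inserting the bound of Lemma \ref{minoqr2} under its hypothesis $B_r + 1 \leq (m_r(g-2)-1)(q^r-1)$.

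There is essentially no technical obstacle: the proof is a bookkeeping exercise that amounts to retaining one of four non-negative contributions in (\ref{minh}) and relaxing two monotone quantities. The only mild point to verify is that each relaxation preserves the sense of the inequality, which is immediate from $K_2(q,r-1,B_1) \geq 1$ and from $B_1 \geq 0$. No further arguments are needed beyond invoking the corresponding lemma for $Q_r$ in each of the four cases.
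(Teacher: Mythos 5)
Your proof is correct and matches the paper's intended derivation: the corollary is stated there without an explicit proof, but it is exactly the simplification of Theorem \ref{finite} you describe --- keep only the first (non-negative) summand of (\ref{minh}), use $K_2(q,r-1,B_1)\geq 1$ and $(g+1)(q+1)-B_1\leq (g+1)(q+1)$, and substitute the bound on $Q_r$ from the relevant lemma in each of the four cases. The only caveat, inherited from the paper rather than introduced by you, is that Lemmas \ref{minoqr}, \ref{minoqr1} and \ref{minoqr2} are stated for $g>3$, so the cases $g=2,3$ strictly require the separate values $Q_r=0$ and $Q_r=1$ noted in Section \ref{sec:lower}.
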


\begin{remarque} 
If we set $L_1=q^{g-1}\frac{(q-1)^2}{(q+1)(g+1)-B_1}$ which corresponds, to the first lower bound on $h$ 
given in \cite[Théorème 2]{lamd} and if we denote by $L'_r$ the bound  (\ref{minh}), then
$$
L'_r = K_2(q,r-1,B_1)\, Q_r
L_1+
$$
$$
\frac{q(q-1)^2K_2\left(\strut q,s_r(g-2),B_1\right)}{(g+1)(q+1)-B_1}\left (
\begin{array}{c}
 B_r+m_r(g-2)-1\\
 B_r-1
\end{array}
\right )+
$$
$$
\frac{(q-1)^2K_1(r-1,B_1)}{\strut(g+1)(q+1)-B_1}
\left (
\begin{array}{c}
 B_r+m_r(g-1)-1\\
 B_r
\end{array}
\right )
+$$
$$
\frac{(q-1)^2K_1(s_r(g-1),B_1)}{\strut(g+1)(q+1)-B_1}
\left (
\begin{array}{c}
 B_r+m_r(g-1)-1\\
 B_r-1
\end{array}
\right ),
$$
which gives in the case $r=1$ and if we choose the lower bound on $Q_r$ given by Lemma \ref{minoqr}:
$$
L'_1 = \left(\frac{q^{g-2}-1}{q^{g-3}(q-1)}+\frac{(B_1-1)f_1}{q}\right) L_1+
$$
$$
\frac{q(q-1)^2}{(g+1)(q+1)-B_1}\left (
\begin{array}{c}
 B_1+g-3\\
 B_1-1
\end{array}
\right )+
$$

$$
\frac{(q-1)^2}{\strut(g+1)(q+1)-B_1}
\left (
\begin{array}{c}
 B_1+g-1\\
 B_1
\end{array}
\right )
.$$
\end{remarque}

\begin{remarque}
 Let us remark that in \cite[Theorem 2]{lamd} Lachaud - Martin-Deschamps study the worst case
(see also Theorem \ref{theolamd}).
In the worst case $r=1$, $B_1=1$ and $g\geq 2$ (third bound given in \cite[Theorem 2]{lamd})
we obtain the same lower bound. More precisely, if we denote by $L_3$ the third bound 
in \cite[Theorem 2]{lamd}), namely
$$L_3=(q^g-1)\frac{q-1}{q+g+qg},$$
then our bound $L'_1$ obtained  using the lower bound of $Q_r$
given by Lemma \ref{minoqr} satisfies the following inequality:
$$L'_1 \geq L_3+ \frac{(q-1)^2q^{g-1}}{q+g+gq}\,\frac{B_1-1}{q}\,f_1. $$
\end{remarque}

\section{Asymptotical bounds with respect to the genus $g$}\label{sec:asymptotic}

We now give asymptotical bounds on the class number of certain sequences of algebraic function fields 
defined over a finite field when $g$ tends to infinity and when 
there exists an integer $r$ such that:
$$\liminf_{k \rightarrow +\infty} \frac{B_r(F_k/{\mathbb{F}}_{q})}{g(F_k)}>0.$$ 

\medskip

We consider a sequence of function fields $F_k/\F_q$ defined over the 
finite field $\F_q$.
Let us denote by $g_k$ the genus of $F_k$ and by $B_r(k)$ the number of places of degree
$r$ of $F_k$. We suppose that the sequence $g_k$ is growing to infinity and that for a certain $r$
the following holds:
$$\liminf_{k \rightarrow +\infty} \frac{B_r(k)}{g_k}=\mu_r >0.$$ 
We also assume that $B_1(k) \geq 1$ for any $k$.

Let us denote by $m_r(x)$ the Euclidean quotient of $x$ by $r$.
Let $a$ and $b$ be two constant integers and
let $\eta$ be any given number such that $0 < \eta <1 $. Then there is an integer 
$k_0>0$ such that for any integer $k\geq k_0$ the following inequalities hold:

\begin{equation}\label{meq1}
\mu_r g_k(1-\eta) < B_r(k) < 
\frac{g_k}{r}\left(q^{\frac{r}{2}}-1 \right)(1+\eta).
\end{equation}

\begin{equation}\label{meq2}
\frac{g_k}{r}(1-\eta) <m_r(g_k-a)-b < \frac{g_k}{r}(1+\eta).
\end{equation}

We study the asymptotic behaviour of the binomial coefficient
$$\left( 
\begin{array}{c}
B_r(k)+m_r(g_k-a)-b\\
B_r(k)
\end{array}
\right),$$

and for this purpose we do the following:
\begin{enumerate}
 \item we use  the generalized binomial coefficients (defined 
for example with the Gamma function);
 \item we remark that 
if $u$ and $v$ are positive real numbers and if $u'\geq u$ and $v' \geq v$,
then the following inequality between generalized binomial coefficients holds:
$$\left( 
\begin{array}{c}
u'+v'\\
u'
\end{array}
\right) \geq \left( 
\begin{array}{c}
u+v\\
u
\end{array}
\right);$$
 \item we use the Stirling formula.
\end{enumerate}

So, we obtain:

\begin{lemme}\label{mainlemma}
For any real numbers  $\epsilon,\eta$ such that $\epsilon>0$ and $0<\eta<1$,
there exists an integer $k_1$ such that for any integer $k\geq k_1$ the following holds:
\begin{equation}\label{dineq}
(1-\epsilon)\frac{C_1}{\sqrt{g_k}} q_1^{g_k} <  \left( 
\begin{array}{c}
B_r(k)+m_r(g_k-a)-b\\
B_r(k)
\end{array}
\right) < (1+\epsilon)\frac{C_2}{\sqrt{g_k}} q_2^{g_k},
\end{equation}
where 
$$C_1=\sqrt{\frac{r\mu_r+1}{2\pi r\mu_r(1-\eta)}}$$ 
and 
$$C_2=\sqrt{\frac{q^{\frac{r}{2}}}{2\pi(q^{\frac{r}{2}}-1)(1+\eta)}}$$
are two positive constants depending upon $\eta$ and where
$$q_1=\left(\frac{\left(\mu_r+\frac{1}{r} \right)^{\mu_r+\frac{1}{r}}}
{\mu_r^{\mu_r}\left(\frac{1}{r} \right)^{\frac{1}{r}}} \right)^{1 -\eta}$$
and
$$q_2=\left(\left(\frac{q^{\frac{r}{2}}}{q^{\frac{r}{2}}-1}\right)^{\frac{1}{r}(q^{\frac{r}{2}}-1)}\sqrt{q}  
\right)^{1 +\eta}.$$
\end{lemme}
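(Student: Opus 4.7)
The plan is to write the generalized binomial coefficient via the Gamma function and reduce the claim to Stirling's formula applied to two ``sandwich'' binomials obtained from (\ref{meq1})--(\ref{meq2}).

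First I would introduce the notations $u = B_r(k)$ and $v = m_r(g_k - a) - b$, and set
\[
u_1 = \mu_r g_k(1-\eta), \quad v_1 = \tfrac{g_k}{r}(1-\eta), \quad u_2 = \tfrac{g_k}{r}(q^{r/2}-1)(1+\eta), \quad v_2 = \tfrac{g_k}{r}(1+\eta).
\]
By (\ref{meq1})--(\ref{meq2}), for $k \geq k_0$ we have $u_1 \leq u \leq u_2$ and $v_1 \leq v \leq v_2$. Using the monotonicity of the generalized binomial coefficient stated in item (2) of the prelude to the lemma, it follows that
\[
\binom{u_1+v_1}{u_1} \leq \binom{u+v}{u} \leq \binom{u_2+v_2}{u_2}.
\]
So it suffices to obtain the lower bound $(1-\epsilon)C_1 g_k^{-1/2} q_1^{g_k}$ for $\binom{u_1+v_1}{u_1}$ and the upper bound $(1+\epsilon)C_2 g_k^{-1/2} q_2^{g_k}$ for $\binom{u_2+v_2}{u_2}$.

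Next I would apply Stirling's formula in the form $\Gamma(x+1) = \sqrt{2\pi x}(x/e)^x (1+o(1))$ to each of the three Gamma factors in
\[
\binom{u_i+v_i}{u_i} = \frac{\Gamma(u_i+v_i+1)}{\Gamma(u_i+1)\Gamma(v_i+1)},
\]
for $i=1,2$. The exponential terms $e^{-(u_i+v_i)}/(e^{-u_i} e^{-v_i})$ cancel, giving the asymptotic identity
\[
\binom{u_i+v_i}{u_i} = (1+o(1))\,\frac{1}{\sqrt{2\pi}}\sqrt{\frac{u_i+v_i}{u_iv_i}}\,\frac{(u_i+v_i)^{u_i+v_i}}{u_i^{u_i} v_i^{v_i}},
\]
where the $o(1)$ is uniform once $k$ is large enough. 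For the lower bound, a direct substitution of $u_1,v_1$ into the polynomial part shows that the $g_k(1-\eta)$ factors in the bases cancel with the sum of the exponents, leaving precisely
\[
\frac{(u_1+v_1)^{u_1+v_1}}{u_1^{u_1} v_1^{v_1}} = \left(\frac{(\mu_r+1/r)^{\mu_r+1/r}}{\mu_r^{\mu_r}(1/r)^{1/r}}\right)^{g_k(1-\eta)} = q_1^{g_k},
\]
while the square root prefactor simplifies to a constant of the form $C_1$ divided by $\sqrt{g_k}$. The analogous computation for $u_2,v_2$ uses the identity $(q^{r/2}-1)+1 = q^{r/2}$ to simplify $u_2+v_2$, and after an algebraic manipulation (factoring out $r$ in the bases and using $q^{1/2} = (q^{r/2})^{1/r}$) one obtains the expression for $q_2$ given in the statement, together with the prefactor $C_2/\sqrt{g_k}$.

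The main obstacle will be the careful bookkeeping of the Stirling error terms, since we must guarantee that, for prescribed $\epsilon > 0$, all the $(1+o(1))$ factors arising from the three Gamma functions combine to a single factor lying in $[1-\epsilon,1+\epsilon]$ once $k$ is sufficiently large; this requires choosing $k_1 \geq k_0$ large enough so that the polynomial parts of the Stirling remainders are dominated (uniformly in the three arguments which are all linear in $g_k$), and then verifying that the two algebraic simplifications yield exactly the stated constants $C_1,C_2,q_1,q_2$.
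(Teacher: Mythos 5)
Your proposal is correct and follows essentially the same route as the paper: sandwich the binomial coefficient between the two extreme generalized binomial coefficients obtained from (\ref{meq1})--(\ref{meq2}) via the monotonicity property, then apply Stirling's formula to each end, with the algebraic simplifications $u_1+v_1=g_k(1-\eta)(\mu_r+\frac{1}{r})$ and $u_2+v_2=\frac{g_k}{r}q^{r/2}(1+\eta)$ yielding $q_1$ and $q_2$. The only difference is that you spell out the bookkeeping of the Stirling error terms, which the paper leaves implicit.
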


\begin{proof}
 By the previous remark we know that:
$$
\left(
\begin{array}{c}
g_k(1-\eta) \left(\mu_r+\frac{1}{r} \right)\\
g_k(1-\eta) \mu_r
\end{array}
\right) \leq 
\left( 
\begin{array}{c}
B_r(k)+m_r(g_k-a)-b\\
B_r(k)
\end{array}
\right) \leq$$
$$
\left(
\begin{array}{c}
\frac{g_k}{r}q^{\frac{r}{2}}(1+\eta) \\
\frac{g_k}{r}\left(q^{\frac{r}{2}}-1 \right)(1+\eta) 
\end{array}
\right).
$$
Using the Stirling formula we get
$$
\left(
\begin{array}{c}
g_k(1-\eta) \left(\mu_r+\frac{1}{r} \right)\\
g_k(1-\eta) \mu_r
\end{array}
\right)\underset{k\rightarrow +\infty}{\sim} \frac{C_1}{\sqrt{g_k}} 
\left(\left(\frac{\left(\mu_r+\frac{1}{r} \right)^{\mu_r+\frac{1}{r}}}
{\mu_r^{\mu_r}\left(\frac{1}{r} \right)^{\frac{1}{r}}} \right)^{1 -\eta}\right)^{g_k}
$$
and

$$ 
\left(
\begin{array}{c}
\frac{g_k}{r}q^{\frac{r}{2}}(1+\eta) \\
\frac{g_k}{r}\left(q^{\frac{r}{2}}-1 \right)(1+\eta) 
\end{array}
\right) \underset{k\rightarrow +\infty}{\sim} 
\frac{C_2}{\sqrt{g_k}}\left(\left( \left(\frac{q^{\frac{r}{2}}}{q^{\frac{r}{2}}-1} \right)
^{\frac{1}{r}(q^{\frac{r}{2}}-1) } 
\sqrt{q}\right)^{1+\eta} \right )^{g_k}.
$$
\end{proof}

Let us recall that for evaluating asymptotically the class number $h_k$, 
we need to evaluate asymptotically the sums $\Sigma_1(k)$ and $\Sigma_2(k)$, 
respectively defined by the formulas (\ref{majsom1r}) and  (\ref{S2}), 
with respect to the parameter $k$.

\subsection{Asymptotical lower bound on $\Sigma_2(k)$}

First, let us consider the sum

$$\Sigma_2(k)=q^{g_k-1}\sum_{n=0}^{g_k-2}\frac{A_n}{q^n}
\geq q^{g_k-1} K_2\left (\strut q,r-1,B_1(k) \right ) Q_r(k),
$$
where 
$$K_2\left (\strut q,r-1,B_1(k)\right )=
\sum_{i=0}^{r-1}\frac{1}{q^i}
\left(
\begin{array}{c}
 B_1(k)+i-1\\
B_1(k)-1
\end{array}
\right)
$$
and
$$Q_r(k)=\sum_{n=0}^{m_r(g_k-2)-1}\frac{1}{q^{nr}}
\left(
\begin{array}{c}
B_r(k)+n-1\\
B_r(k)-1
\end{array}
\right).$$

For evaluating asymptotically the sum $\Sigma_2(k)$, we have to evaluate asymptotically $Q_r(k)$. 
Let us set 
$$T_r(X,k)=
\sum_{n=0}^{M_r(k)}
\left(
\begin{array}{c}
B_r(k)+n-1\\
B_r(k)-1
\end{array}
\right) X^n,
$$
where $M_r(k)=m_r(g_k-2)-1$, then
$$Q_r(k)= T_r\left(\frac{1}{q^r},k\right).$$
Let us set
$$S_r(X,k)=
\sum_{n=0}^{\infty}
\left(
\begin{array}{c}
B_r(k)+n-1\\
B_r(k)-1
\end{array}
\right) X^n,
$$
then
$$S_r(X,k)=\frac{1}{(1-X)^{B_r(k)}}.$$
But $S_r(X,k)=T_r(X,k)+R_r(X,k)$, where
$$R_r(X,k)=
\sum_{n>M_r(k)}
\left(
\begin{array}{c}
B_r(k)+n-1\\
B_r(k)-1
\end{array}
\right) X^n.
$$
By the Taylor formula the following holds:
$$ R_r(X,k)=
\int_0^X \frac{(X-t)^{M_r(k)}}{M_r(k)!} S_r^{(M_r(k)+1)}(t,k) dt.$$
We have to compute the successive derivatives of $S_r(X,k)$:
$$ S_r^{(M_r(k)+1)}(X,k)= \frac{B_r(k)\, M_r(k)!}{(1-X)^{B_r(k)+M_r(k)+1}} 
\left(
\begin{array}{c}
 B_r(k)+M_r(k)\\
B_r(k)
\end{array}
\right) .
$$
Then
$$ R_r(X,k)=
B_r(k)
\left(
\begin{array}{c}
 B_r(k)+M_r(k)\\
B_r(k)
\end{array}
\right)
\int_0^X \frac{(X-t)^{M_r(k)}}{(1-t)^{B_r(k)+M_r(k)+1}}dt
$$

\begin{lemme}\label{remindermaj}
There is an integer $k_2$ such that for $k \geq k_2$ the function
$$f_k(t)= \frac{(\frac{1}{q^r}-t)^{M_r(k)}}{(1-t)^{B_r(k)+M_r(k)+1}}$$
is decreasing on $[0,\frac{1}{q^r}]$.
\end{lemme}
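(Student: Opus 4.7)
The plan is to verify the monotonicity directly from the sign of the derivative. Writing $B = B_r(k)$ and $M = M_r(k) = m_r(g_k-2)-1$, the logarithmic derivative
\[
\frac{f_k'(t)}{f_k(t)} \;=\; \frac{B+M+1}{1-t} \;-\; \frac{M}{\frac{1}{q^r}-t}
\]
controls the sign of $f_k'$ on $[0,\tfrac{1}{q^r})$, where $f_k$ is strictly positive. A short manipulation shows that $f_k'(t)\le 0$ is equivalent to
\[
(B+1)\Bigl(\tfrac{1}{q^r}-t\Bigr) \;\le\; M\Bigl(1-\tfrac{1}{q^r}\Bigr),
\]
and the left-hand side attains its maximum on the interval at $t=0$. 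Hence the monotonicity on the whole interval reduces to the single numerical inequality
\[
B_r(k) + 1 \;\le\; M_r(k)\,(q^r-1).
\]

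To produce the required threshold $k_2$, I would combine the Drinfeld--Vladut estimate contained in \eqref{meq1} with \eqref{meq2} applied to the constants $2$ and $1$: for $k$ large one has $B_r(k) < \frac{g_k}{r}(q^{r/2}-1)(1+\eta)$ and $M_r(k) > \frac{g_k}{r}(1-\eta)$. Because
\[
q^r - 1 \;=\; (q^{r/2}-1)(q^{r/2}+1) \;>\; q^{r/2} - 1,
\]
there is room to pick $\eta$ small enough that $(q^{r/2}-1)(1+\eta) < (q^r-1)(1-\eta)$. Multiplying by $g_k/r$ and absorbing the additive constant $1$ into the resulting slack for $g_k$ large yields $B_r(k)+1 \le M_r(k)(q^r-1)$ from some index $k_2$ onward.

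The argument is essentially mechanical, and I do not anticipate a real obstacle; the only delicate point is to propagate the strict gap $q^r > q^{r/2}$ through a single choice of $\eta$ that simultaneously controls both \eqref{meq1} and \eqref{meq2}, and to note that continuity of $f_k$ at $t=\tfrac{1}{q^r}$ (where $f_k$ vanishes) promotes the strict monotonicity on $[0,\tfrac{1}{q^r})$ to monotonicity on the closed interval, using $M_r(k)\ge 1$ which holds as soon as $g_k \ge r+2$.
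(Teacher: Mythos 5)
Your proof is correct and follows essentially the same route as the paper: both compute the derivative (you via the logarithmic derivative, the paper directly), reduce the sign question to the single inequality $B_r(k)+1\le M_r(k)(q^r-1)$ (the paper phrases this as the root $t_0$ of the linear factor being negative), and then obtain that inequality for large $k$ from the bounds \eqref{meq1} and \eqref{meq2}, i.e.\ ultimately from the Drinfeld--Vladut estimate $B_r(k)\lesssim \frac{g_k}{r}(q^{r/2}-1)$ against $M_r(k)\gtrsim \frac{g_k}{r}$. No gaps.
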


\begin{proof}
 $$f'(t)=$$
 $$\frac{(\frac{1}{q^r}-t)^{M_r(k)-1}}{(1-t)^{M_r(k)+B_r(k)+2}} 
\left ( -M_r(k)(1-t)+\left (\strut M_r(k)+B_r(k)+1\right )\left(\frac{1}{q^r}-t\right)\right).$$
 The derivative vanishes for $t_1=\frac{1}{q^r}$ and for 
 $$t_0=-\frac{M_r(k)}{B_r(k)+1}\left(1-\frac{1}{q^r}\right)+\frac{1}{q^r}.$$
 Let us choose a real number $\epsilon$ such that $0<\epsilon <1$.
 From (\ref{meq1}) and (\ref{meq2}) there exists an integer $k_2$ such that for $k \geq k_2$ the following holds:
 $$\frac{M_r(k)}{B_r(k)+1}  \geq \frac{1-\epsilon}{q^{\frac{r}{2}}-1}.$$
 Hence
 $$t_0 \leq -\frac{(1-\epsilon)(q^r-1)}{q^{\frac{r}{2}}-1}\frac{1}{q^r}+\frac{1}{q^r}.$$
 If $\epsilon$ is choosen sufficiently small, then
 $$\frac{(1-\epsilon)(q^r-1)}{q^{\frac{r}{2}}-1}>1,$$
 and $t_0 <0$.
\end{proof}

We conclude that 
$$R_r\left(\frac{1}{q^r},k\right)\leq B_r(k)\left(
\begin{array}{c}
 B_r(k)+M_r(k)\\
B_r(k)
\end{array}
\right) \left(\frac{1}{q^r}\right)^{M_r(k)+1}.$$

\begin{lemme}\label{remainder1}
Let $q$ be a prime power and let $r\geq 1$ be an integer.
Then if $q\geq 4$ or $r\geq 2$, the sequence of remainder terms 
$R_r\left(\frac{1}{q^r},k\right)$ is such that 
$$\lim_{k\rightarrow +\infty}\frac{R_r\left(\frac{1}{q^r},k\right)}{S_r(\frac{1}{q^r},k)}=0$$
\end{lemme}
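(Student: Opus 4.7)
The plan is to combine the explicit bound for the remainder displayed just after Lemma \ref{remindermaj}, namely
$$R_r\!\left(\tfrac{1}{q^r},k\right) \leq B_r(k)\binom{B_r(k)+M_r(k)}{B_r(k)}\left(\tfrac{1}{q^r}\right)^{M_r(k)+1},$$
with the closed form $S_r(1/q^r,k)=(q^r/(q^r-1))^{B_r(k)}$ derived earlier. The claim then reduces to showing that
$$\frac{B_r(k)}{q^r}\binom{B_r(k)+M_r(k)}{B_r(k)}\left(\tfrac{1}{q^r}\right)^{M_r(k)}\left(\tfrac{q^r-1}{q^r}\right)^{B_r(k)}\xrightarrow[k\to\infty]{}0.$$

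Next, I would apply Stirling's formula to the binomial coefficient, exactly as in the proof of Lemma \ref{mainlemma}. Setting $p=(q^r-1)/q^r$, $N_k=B_r(k)+M_r(k)$, and $\lambda_k=B_r(k)/N_k$, the exponential part of the product simplifies to $\exp(-N_k\, D(\lambda_k\|p))$, where $D(\lambda\|p)=\lambda\log(\lambda/p)+(1-\lambda)\log((1-\lambda)/(1-p))$ is the binary Kullback--Leibler divergence. Stirling yields a polynomial prefactor of order $O(\sqrt{g_k})$, which together with the outer factor $B_r(k)/q^r=O(g_k)$ is dominated by any genuine exponential decay.

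The heart of the proof is therefore to show $\liminf_k D(\lambda_k\|p)>0$. By the inequalities (\ref{meq1}) and (\ref{meq2}), $\lambda_k$ stays asymptotically close to $\lambda_\infty=r\mu_r/(r\mu_r+1)$, so it suffices to establish $\lambda_\infty<p$ strictly. The Drinfeld--Vladut bound $r\mu_r\leq q^{r/2}-1$ gives $\lambda_\infty\leq 1-1/q^{r/2}$, whereas $p=1-1/q^r$; since $q^r>q^{r/2}$, the gap $p-\lambda_\infty\geq 1/q^{r/2}-1/q^r$ is strict. The hypothesis ``$q\geq 4$ or $r\geq 2$'' is exactly $q^{r/2}\geq 2$, which quantifies this gap uniformly and yields a definite positive lower bound on $D(\lambda_\infty\|p)$, hence on $D(\lambda_k\|p)$ for all large $k$ by continuity. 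The main technical point to watch is that the Stirling approximation be uniform in $k$, which is guaranteed by $B_r(k),M_r(k)=\Theta(g_k)$ coming again from (\ref{meq1}) and (\ref{meq2}); once that uniformity is in hand, the exponential decay $\exp(-c g_k)$ with $c>0$ absorbs the polynomial prefactor and gives $R_r(1/q^r,k)/S_r(1/q^r,k)\to 0$.
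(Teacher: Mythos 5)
Your proof is correct, but it takes a genuinely different route from the paper's, and in fact proves more. The paper bounds the binomial coefficient $\binom{B_r(k)+M_r(k)}{B_r(k)}$ in isolation via Lemma \ref{mainlemma}, i.e.\ by its Drinfeld--Vladut worst-case growth rate $q_2^{g_k}$, and then argues that $R_r\to 0$ while $S_r\to\infty$ separately; this forces the condition $q_2<q$, which is exactly where the hypothesis ``$q\geq 4$ or $r\geq 2$'' enters (for $q\in\{2,3\}$ and $r=1$ one has $q_2>q$, and the paper must fall back on the more delicate subsequence argument of Proposition \ref{somme2}). You instead couple the binomial coefficient with \emph{both} the factor $q^{-rM_r(k)}$ and the factor $p^{B_r(k)}$ contributed by $1/S_r$, recognizing the product as the binomial probability $\mathbb{P}(\mathrm{Bin}(N_k,p)=B_r(k))\leq\exp\bigl(-N_k D(\lambda_k\|p)\bigr)$. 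Since $D(\cdot\|p)$ vanishes only at $p$ and is decreasing on $[0,p]$, the only input needed is that $\lambda_k$ stays below $p$ by a fixed margin, and the upper bound in (\ref{meq1}) gives $\limsup_k\lambda_k\leq 1-q^{-r/2}<1-q^{-r}=p$ for \emph{every} $q\geq 2$ and $r\geq 1$. So your argument never actually uses the hypothesis ``$q\geq 4$ or $r\geq 2$'' (nor, for this lemma, the lower bound $\mu_r>0$): it yields the conclusion uniformly and would subsume the separate treatment of $q\in\{2,3\}$, $r=1$ in Proposition \ref{somme2}. What the paper's cruder decoupling buys is only that it reuses Lemma \ref{mainlemma} verbatim.

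One point to tighten: you write that $\lambda_k$ ``stays asymptotically close to $\lambda_\infty=r\mu_r/(r\mu_r+1)$'' and that it suffices to prove $\lambda_\infty<p$. Since $\mu_r$ is only a $\liminf$, $\lambda_k$ need not converge to $\lambda_\infty$ and may exceed it along subsequences, so $\lambda_\infty<p$ alone would not suffice. What you need --- and what you in fact invoke in the next sentence --- is the Drinfeld--Vladut \emph{upper} bound from (\ref{meq1}), which bounds $\limsup_k\lambda_k$ (not merely $\lambda_\infty$) strictly below $p$. With that rephrasing the argument is complete; note also that the entropy bound $\binom{N}{B}p^{B}(1-p)^{N-B}\leq\exp\bigl(-N D(B/N\,\|\,p)\bigr)$ holds exactly for all $N,B$, so the uniformity of Stirling's formula, which you flag as the technical point to watch, can be dispensed with entirely.
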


\begin{proof}
By Lemma (\ref{mainlemma}), for any real number $\epsilon$ and $\eta$ such that $\epsilon>0$ and $0<\eta<1$, 
we get $$R_r\left(\frac{1}{q^r},k\right) <(1+\epsilon) B_r(k) 
 \frac{C_2}{\sqrt{g_k}} q_2^{g_k} \left(\frac{1}{q^r}\right)^{M_r(k)+1}$$ 
where
$$q_2=\left(\left(\frac{q^{\frac{r}{2}}}{q^{\frac{r}{2}}-1}\right)^{\frac{1}{r}(q^{\frac{r}{2}}-1)}  
\sqrt{q}\right)^{1 +\eta} \hbox{ and }
C_2=\sqrt{\frac{q^{\frac{r}{2}}}{2\pi(q^{\frac{r}{2}}-1)(1+\eta)}}.$$
But
$$\ln \left(\left(\frac{q^{\frac{r}{2}}}{q^{\frac{r}{2}}-1}\right)^{\frac{1}{r}(q^{\frac{r}{2}}-1)}  
\right)={\frac{1}{r}(q^{\frac{r}{2}}-1)}\ln\left(1+\frac{1}{q^{\frac{r}{2}}-1} \right)<\frac{1}{r}.$$
Then
\begin{equation}\label{durdur}
\left(\frac{q^{\frac{r}{2}}}{q^{\frac{r}{2}}-1}\right)^{\frac{1}{r}(q^{\frac{r}{2}}-1)}  
< e^{\frac{1}{r}}.
\end{equation}
A direct computation for the two particular cases $q=2, r=2$ and $q=4,r=1$ shows that
for these values we have 
$$\left(\frac{q^{\frac{r}{2}}}{q^{\frac{r}{2}}-1}\right)^{\frac{1}{r}(q^{\frac{r}{2}}-1)}=\sqrt{q}.$$
In all the other cases we derive from the inequality (\ref{durdur}) 

$$\left(\frac{q^{\frac{r}{2}}}{q^{\frac{r}{2}}-1}\right)^{\frac{1}{r}(q^{\frac{r}{2}}-1)}<\sqrt{q}.$$
Hence, if $\left(\frac{q^{\frac{r}{2}}}{q^{\frac{r}{2}}-1}\right)^{\frac{1}{r}(q^{\frac{r}{2}}-1)}<\sqrt{q}$ then for $\eta$ sufficiently small, $q_2<q$ and so 
$$\lim_{k\rightarrow +\infty} B_r(k) q_2^{g_k} \left(\frac{1}{q^r}\right)^{M_r(k)+1}=0.$$
Moreover, $$S_r\left(\frac{1}{q^r},k\right)=\left(\frac{q^r}{q^r-1}\right)^{B_r(k)}$$ which tends to the infinity.

Now if $\left(\frac{q^{\frac{r}{2}}}{q^{\frac{r}{2}}-1}\right)^{\frac{1}{r}(q^{\frac{r}{2}}-1)}=\sqrt{q}$ 
(i.e if $q=2$ and $r=2$ or if $q=4$ and $r=1$) we have 
$q_2=q^{1+\eta}$. But 
$B_r(k)\geq \alpha g_k$ 
where $\alpha$ is a positive constant.
Then for $\eta$ sufficiently small, we have 
$$1<\frac{q_2}{q}=q^{\eta} <\left(\frac{q^r}{q^r-1}\right)^{\alpha},$$ 
which implies 
$$\left(\frac{q_2}{q}\right)^{g_k}\in o\left(S_r\left(\frac{1}{q^r},k\right)\right).$$
\end{proof}




Now, we can establish the following proposition:

\begin{proposition}\label{somme2}
Let $q$ be a prime power $q$ and let $r\geq 1$ be an integer. Assume that 
$$\liminf_{k \rightarrow +\infty} \frac{B_r(k)}{g_k}=\mu_r >0.$$
Then, the sum 
$$Q_r(k)=\sum_{n=0}^{m_r(g_k-2)-1}\frac{1}{q^{nr}}
\left(
\begin{array}{c}
B_r(k)+n-1\\
B_r(k)-1
\end{array}
\right)$$ 
satisfies $$Q_r(k)\underset{k\rightarrow +\infty}{\sim}\left(\frac{q^r}{q^r-1}\right)^{B_r(k)}.$$
Hence, the sequence $$\Sigma_2(k)=q^{g_k-1}\sum_{n=0}^{g_k-2}\frac{A_n}{q^n}$$ satisfies
$$\Sigma_2(k)\in\Omega\left(\left(\frac{q^r}{q^r-1}\right)^{B_r(k)}q^{g_k}\right).$$
\end{proposition}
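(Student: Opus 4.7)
The plan is to assemble pieces already in place. Both parts of the proposition reduce almost immediately to results established earlier, so the work is mostly repackaging.

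First I would decompose at $X = 1/q^r$ as
$$S_r\!\left(1/q^r,k\right) = T_r\!\left(1/q^r,k\right) + R_r\!\left(1/q^r,k\right),$$
observing that by definition $T_r(1/q^r,k)=Q_r(k)$, while the closed form $S_r(X,k)=(1-X)^{-B_r(k)}$ evaluates to
$$S_r\!\left(1/q^r,k\right) = \left(\frac{q^r}{q^r-1}\right)^{B_r(k)}.$$
So the first claim amounts to showing $R_r(1/q^r,k)/S_r(1/q^r,k) \to 0$, which is exactly the content of Lemma \ref{remainder1} under the hypothesis $\liminf B_r(k)/g_k = \mu_r > 0$. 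Then
$$\frac{Q_r(k)}{(q^r/(q^r-1))^{B_r(k)}} = 1 - \frac{R_r(1/q^r,k)}{S_r(1/q^r,k)} \longrightarrow 1,$$
which delivers the asymptotic equivalence.

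For the $\Omega$-bound on $\Sigma_2(k)$, I would appeal to inequality (\ref{S2}), which already yields
$$\Sigma_2(k) \geq q^{g_k-1}\, K_2(q,r-1,B_1(k))\, Q_r(k).$$
Since $B_1(k) \geq 1$, each coefficient $\binom{B_1(k)+i-1}{B_1(k)-1}$ is at least $1$, so in particular $K_2(q,r-1,B_1(k)) \geq 1$ (the $i=0$ term alone contributes $1$). Combined with the equivalence $Q_r(k) \sim (q^r/(q^r-1))^{B_r(k)}$ proved in the first step, this gives, for all $k$ sufficiently large,
$$\Sigma_2(k) \geq \frac{1}{2q}\, q^{g_k} \left(\frac{q^r}{q^r-1}\right)^{B_r(k)},$$
which is the claimed $\Omega$-estimate.

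The substantive obstacle has already been handled in Lemmas \ref{mainlemma} and \ref{remainder1}, where the Taylor remainder is bounded via the monotonicity of $f_k$ on $[0,1/q^r]$ and then compared to $S_r$ through a careful Stirling estimate that hinges on the inequality $(q^{r/2}/(q^{r/2}-1))^{(q^{r/2}-1)/r} \leq \sqrt{q}$. The one mild gap worth flagging is that Lemma \ref{remainder1} is formally stated under the restriction $q \geq 4$ or $r \geq 2$; for the residual small cases $(q,r)=(2,1)$ and $(3,1)$, the current proposal covers only those situations, and securing full generality would require a separate argument using a sharper hypothesis on $\mu_r$ (e.g.\ a Drinfeld–Vl\u{a}du\c{t}-type constraint) to beat $q_2 > q$ in that regime.
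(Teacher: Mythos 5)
Your argument for the generic case is exactly the paper's: decompose $S_r=T_r+R_r$, note $T_r(1/q^r,k)=Q_r(k)$ and $S_r(1/q^r,k)=\left(\frac{q^r}{q^r-1}\right)^{B_r(k)}$, invoke Lemma \ref{remainder1} for $R_r/S_r\to 0$, and then feed the equivalence into inequality (\ref{S2}) with $K_2(q,r-1,B_1(k))\geq 1$. That part is fine. But the proposition is stated for every prime power $q$ and every $r\geq 1$, so the cases $(q,r)=(2,1)$ and $(3,1)$ are not optional, and your proof simply stops there: you correctly flag that Lemma \ref{remainder1} excludes them and that one would need a Drinfeld--Vl\u{a}du\c{t}-type constraint on $\mu_r$, but you do not carry out that argument. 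As it stands this is a genuine gap, not a cosmetic one, since in those cases the exponential rate $q_2$ produced by Lemma \ref{mainlemma} equals $q^{1+\eta}>q$ and the crude comparison $R_r\cdot q^{-\text{(nothing)}}$ against $S_r$ no longer closes by itself.

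The paper fills this gap by redoing the Stirling estimate with a sharper input: assuming first that $\mu_1=\lim_k B_1(k)/g_k$ exists, one replaces the Drinfeld--Vl\u{a}du\c{t} upper bound on $B_1(k)$ by $\mu_1 g_k(1+\eta)$, so that the binomial coefficient in $R_1(1/q,k)$ grows like $\left(\frac{(\mu_1+1)^{\mu_1+1}}{\mu_1^{\mu_1}}\right)^{g_k(1+\eta)}$, and then compares directly with $S_1(1/q,k)\geq\left(\frac{q}{q-1}\right)^{\mu_1 g_k(1-\eta)}$ rather than with the constant $1$. The ratio is then controlled by the function $f(\mu_1)=\frac{(\mu_1+1)^{\mu_1+1}}{\mu_1^{\mu_1}}\left(\frac{q-1}{q}\right)^{\mu_1}\frac{1}{q}$, which is increasing on $(0,\sqrt{q}-1]$ (this is where Drinfeld--Vl\u{a}du\c{t} enters, bounding $\mu_1\leq\sqrt{q}-1$) and satisfies $f(\mu_1)<0.89$ for $q=2$ and $f(\mu_1)<0.81$ for $q=3$; hence $R_1/S_1\to 0$ also in these cases. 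Finally, since the hypothesis is only $\liminf_k B_1(k)/g_k=\mu_1>0$, the paper adds a subsequence-extraction argument: if the conclusion failed along some subsequence, one could extract a further subsequence along which $B_1(k)/g_k$ converges to some $\mu\geq\mu_1>0$, contradicting the case just treated. Your write-up is missing both this refined estimate and the liminf-to-limit reduction; you would need to supply them (or something equivalent) for the proposition to be proved in the stated generality.
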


\begin{proof}
If $q\geq 4$ or $r\geq 2$, the result is given by Lemma \ref{remainder1}.
Let us consider the particular cases of $r=1$ and $q=2$ or $q=3$. In these cases, 
first assume that the following limit exists:$$\mu_1=\lim_{k \rightarrow +\infty} \frac{B_1(k)}{g_k}>0.$$
Let us do the same study as in Lemma \ref{mainlemma}.
Let $a$ be a constant integer and
let $\eta$ be any given number such that $0 < \eta <1 $. Then there is an integer 
$k_0>0$ such that for any integer $k\geq k_0$ the following inequalities hold:

\begin{equation}\label{meq11}
\mu_1 g_k(1-\eta) < B_1(k) < 
\mu_1 g_k(1+\eta).
\end{equation}

We study the asymptotic behaviour of the binomial coefficient:
$$\left( 
\begin{array}{c}
B_1(k)+g_k-a\\
B_1(k)
\end{array}
\right).$$

For any real numbers  $\epsilon,\eta$ such that $\epsilon>0$ and $0<\eta<1$,
there exists an integer $k_1$ such that for any integer $k\geq k_1$ the following holds:
\begin{equation}\label{dineq2}
(1-\epsilon)\frac{C_1}{\sqrt{g_k}} q_1^{g_k} <  \left( 
\begin{array}{c}
B_1(k)+g_k-a \\
B_1(k)
\end{array}
\right) < (1+\epsilon)\frac{C_2}{\sqrt{g_k}} q_2^{g_k},
\end{equation}
where 
$$C_1=\sqrt{\frac{\mu_1+1}{2\pi \mu_1(1-\eta)}}$$ 
and 
$$C_2=\sqrt{\frac{\mu_1+1}{2\pi \mu_1(1+\eta)}}$$
are two positive constants depending upon $\eta$ and where
$$q_1=\left(\frac{\left(\mu_1+1 \right)^{\mu_1+1}}
{\mu_1^{\mu_1}}\right)^{1 -\eta}$$
and
$$q_2=\left(\frac{\left(\mu_1+1 \right)^{\mu_1+1}}
{\mu_1^{\mu_1}}\right)^{1 +\eta}.$$



Hence, 

\begin{equation}\label{limRS}
\lim_{k\rightarrow +\infty}\frac{R_1\left(\frac{1}{q},k\right)}{S_1(\frac{1}{q},k)}=0.
\end{equation}


Indeed, for any real number $\epsilon$ and $\eta$ such that $\epsilon>0$ and $0<\eta<1$, 
we get 
$$R_1\left(\frac{1}{2},k\right) <C_2 q^2 (1+\epsilon) (1+\eta)\mu_1 \sqrt{g_k}
  \left(\frac{\left(\mu_1+1 \right)^{\mu_1+1}}
{\mu_1^{\mu_1}}\right)^{g_k(1 +\eta)}\left(\frac{1}{q}\right)^{g_k}$$
and 
$$S_1(\frac{1}{q},k)\geq \left(\frac{q}{q-1}\right)^{\mu_1 g_k(1-\eta)}.$$
Then
$$\frac{R_1\left(\frac{1}{2},k\right)}{S_1(\frac{1}{q},k)} 
\leq  $$ $$C_2 q^2 (1+\epsilon) (1+\eta)\mu_1 \sqrt{g_k}\left[
  \left(\frac{\left(\mu_1+1 \right)^{\mu_1+1}}
{\mu_1^{\mu_1}}\right)^{(1 +\eta)}\left(\frac{q-1}{q} \right)^{\mu_1(1-\eta)}
\left(\frac{1}{q}\right)\right]^{g_k}.$$
Let us study for $0<\mu_1 \leq \sqrt{q}-1$ the function
$$f(\mu_1)=\left(\frac{\left(\mu_1+1 \right)^{\mu_1+1}}
{\mu_1^{\mu_1}}\right)\left(\frac{q-1}{q}\right)^{\mu_1}\frac{1}{q}.$$
This function is increasing since the derivative
$$(\log(f(\mu_1))'=\frac{f'(\mu_1)}{f(\mu_1)}=\log\left(\left(1+\frac{1}{\mu_1}\right)\left(1-\frac{1}{q}\right)\right)>0$$
with $\mu_1 \leq \sqrt{q}-1$. 
In particular, we have:

\begin{enumerate}
 \item for $q=2$, $f(\mu_1) <0.89$;
 \item for $q=3$, $f(\mu_1) <0.81$.
\end{enumerate}
Then in each case, for $\eta$ sufficiently small, 
\begin{equation}\label{equagene}
 \left[
  \left(\frac{\left(\mu_1+1 \right)^{\mu_1+1}}
{\mu_1^{\mu_1}}\right)^{(1 +\eta)}\left(\frac{q-1}{q} \right)^{\mu_(1-\eta)}\left(\frac{1}{q}\right)\right]<1,
\end{equation}
and consequently we obtain the limit (\ref{limRS}).
Then as
$$\lim_{k \rightarrow +\infty} \frac{B_1(k)}{g_k}=\mu_1 >0,$$
for any $\epsilon >0$ there exists $k_0$ such that for each $k\geq k_0$
$$Q_1(k)\geq (1-\epsilon)\left( \frac{q}{q-1}\right)^{B_1(k)}.$$
Suppose now that we have the weaker assumption
$$\liminf_{k \rightarrow +\infty} \frac{B_1(k)}{g_k}=\mu_1 >0$$
and suppose that there is a real number $\epsilon>0$ such that for any $k_0$,
there exists an integer $k\geq k_0$ such that 
$$Q_1(k)< (1-\epsilon)\left( \frac{q}{q-1}\right)^{B_1(k)}.$$
We can construct an infinite subsequence $(Q_1(k_i))_i$  of $(Q_1(k))_k$such that the previous
inequality holds for any $i$. Next we can extract from $k_i$ a subsequence $k_{i_j}$
such that $$\lim_{j \rightarrow +\infty} \frac{B_1(k_{i_j})}{g_k}=\mu\geq \mu_1 >0.$$
Using the previous result we conclude that for $j$ sufficiently large
$$Q_1(k_{i_j})\geq (1-\epsilon)\left( \frac{q}{q-1}\right)^{B_1(k)},$$
which gives a contradiction.


\end{proof}

\begin{corollaire}
Let $q$ be a prime power $q$ and let $r\geq 1$ be an integer. Assume that 

$$\liminf_{k \rightarrow +\infty} \frac{B_r(k)}{g_k}=\mu_r >0.$$
 For any $\alpha$ such that $0< \alpha < \mu_r$ the following holds:
$$\Sigma_2(k)\in\Omega\left(\left[\left(\frac{q^r}{q^r-1}\right)^{\alpha}q\right]^{g_k}\right).$$
\end{corollaire}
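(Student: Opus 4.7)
The plan is to derive this corollary directly from Proposition \ref{somme2}, which already furnishes the stronger estimate
$$\Sigma_2(k)\in\Omega\left(\left(\frac{q^r}{q^r-1}\right)^{B_r(k)}q^{g_k}\right).$$
The only remaining task is to convert the exponent $B_r(k)$, which depends on $k$, into the uniform expression $\alpha g_k$ given in the statement.

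First I would exploit the hypothesis $\liminf_{k\to+\infty} B_r(k)/g_k=\mu_r$. Fix any $\alpha$ with $0<\alpha<\mu_r$. By definition of $\liminf$, there exists an integer $k_0$ such that for all $k\ge k_0$ one has $B_r(k)/g_k\ge\alpha$, i.e.\ $B_r(k)\ge\alpha g_k$. Since $q^r/(q^r-1)>1$, the function $x\mapsto (q^r/(q^r-1))^x$ is strictly increasing, so for $k\ge k_0$
$$\left(\frac{q^r}{q^r-1}\right)^{B_r(k)}\ge\left(\frac{q^r}{q^r-1}\right)^{\alpha g_k}.$$

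Combining this monotonicity estimate with Proposition \ref{somme2}, there is a constant $c>0$ and an integer $k_1\ge k_0$ so that for all $k\ge k_1$,
$$\Sigma_2(k)\ge c\left(\frac{q^r}{q^r-1}\right)^{B_r(k)}q^{g_k}\ge c\left(\frac{q^r}{q^r-1}\right)^{\alpha g_k}q^{g_k}=c\left[\left(\frac{q^r}{q^r-1}\right)^{\alpha}q\right]^{g_k},$$
which is exactly the $\Omega$-bound claimed.

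There is essentially no obstacle here; the work was all done in establishing Proposition \ref{somme2}. The only subtlety worth flagging is that we must use $\liminf$ (not $\lim$) in the usual way: the inequality $B_r(k)\ge\alpha g_k$ only holds eventually and only for $\alpha$ strictly less than $\mu_r$, which is exactly why the statement is phrased for arbitrary $\alpha<\mu_r$ rather than for $\alpha=\mu_r$ itself.
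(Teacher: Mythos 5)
Your proof is correct and is essentially the argument the paper intends: the corollary is stated without proof precisely because it follows from Proposition \ref{somme2} by the observation that $\liminf_k B_r(k)/g_k=\mu_r>\alpha$ forces $B_r(k)\geq\alpha g_k$ for all large $k$, together with the monotonicity of $x\mapsto\left(q^r/(q^r-1)\right)^x$. Nothing further is needed.
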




\subsection{Asymptotical lower bound on $\Sigma_1(k)$}

Let us consider now the sum
$$\Sigma_1(k)
=\sum_{n=0}^{g_k-1}A_n.$$ Using the inequality (\ref{majsom1r}), we will obtain
for this sum a lower bound which is negligible compared with 
the one obtained for $\Sigma_2(k)$. More precisely,
the best we can do with this inequality is given by the following proposition:

\begin{proposition}\label{somme1}
For any $\eta$ such that $0<\eta<1$, the sequence $\Sigma_1(k)$ satisfies
$$\Sigma_1(k) \in\Omega\left(\frac{q_1^{g_k}}{\sqrt{g_k}}\right)$$
where 
$$q_1=\left(\frac{\left(\mu_r+\frac{1}{r} \right)^{\mu_r+\frac{1}{r}}}
{\mu_r^{\mu_r}\left(\frac{1}{r} \right)^{\frac{1}{r}}} \right)^{1 -\eta}.$$
\end{proposition}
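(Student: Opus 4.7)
The plan is to derive this proposition as a direct consequence of the combinatorial inequality in Lemma \ref{lem1} and the asymptotic estimate for binomial coefficients in Lemma \ref{mainlemma}.

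First I would recall from Lemma \ref{lem1} that
$$\Sigma_1(k) \geq K_1(r-1, B_1(k)) \binom{B_r(k) + m_r(g_k-1) - 1}{B_r(k)} + K_1(s_r(g_k-1), B_1(k)) \binom{B_r(k) + m_r(g_k-1) - 1}{B_r(k)-1}.$$
The second term is nonnegative, so I drop it. Since the assumption $B_1(k) \geq 1$ gives $K_1(r-1, B_1(k)) = \binom{B_1(k)+r-1}{B_1(k)} \geq 1$, this reduces to
$$\Sigma_1(k) \geq \binom{B_r(k) + m_r(g_k-1) - 1}{B_r(k)}.$$

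Next, I would apply the left-hand inequality of Lemma \ref{mainlemma} with parameters $a = 1$ and $b = 1$, for which $m_r(g_k - a) - b = m_r(g_k - 1) - 1$. This yields: for any $\epsilon > 0$ and any $\eta \in (0,1)$, there is an integer $k_1$ such that for all $k \geq k_1$,
$$\binom{B_r(k) + m_r(g_k-1) - 1}{B_r(k)} \geq (1-\epsilon)\, \frac{C_1}{\sqrt{g_k}}\, q_1^{g_k},$$
with $C_1 > 0$ as defined in Lemma \ref{mainlemma} and $q_1$ exactly as stated in the present proposition. Chaining the two inequalities gives $\Sigma_1(k) \geq \frac{(1-\epsilon) C_1}{\sqrt{g_k}}\, q_1^{g_k}$, so $\Sigma_1(k) \in \Omega(q_1^{g_k}/\sqrt{g_k})$.

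There is really no serious obstacle here: the proposition is essentially a bookkeeping corollary of the two preceding results, where all the genuine analytic work (application of Stirling's formula to the binomial coefficient) was carried out in Lemma \ref{mainlemma}. The only small point to verify is that dropping the second term of Lemma \ref{lem1} does not lose any of the desired exponential growth, which is clear since both binomial coefficients appearing there have the same dominant asymptotic base $q_1$. This also explains the remark preceding the proposition that the method cannot do better than $q_1^{g_k}/\sqrt{g_k}$: every summand on the right-hand side of the bound in Lemma \ref{lem1} is at most of this order.
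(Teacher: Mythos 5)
Your proof is correct and follows essentially the same route as the paper: both reduce $\Sigma_1(k)$ via Lemma \ref{lem1} to the single binomial coefficient $\binom{B_r(k)+m_r(g_k-1)-1}{B_r(k)}$ and then invoke the Stirling estimate of Lemma \ref{mainlemma}. The only (immaterial) difference is that you discard the second, nonnegative term of Lemma \ref{lem1} outright, while the paper keeps it and bounds the resulting prefactor $K_1+\frac{B_r(k)}{m_r(g_k-1)}K_1$ to justify that this is also the best the inequality can give.
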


\begin{proof}
From the inequality (\ref{majsom1r}) we know that
$$ 
\Sigma_1(k)
\geq K_1(r,B_1(k)) 
\left (
\begin{array}{c}
 B_r(k)+m_r(g_k-1)\\
 B_r(k)
\end{array}
\right )+
$$

$$
K_1\left(\strut s_r(g-1),B_1(k)\right) 
\left (
\begin{array}{c}
 B_r(k)+m_r(g_k-1)\\
 B_r(k)-1
\end{array}
\right ).
$$
This inequality can be written in the following way
$$
\Sigma_1(k)
\geq 
$$

$$\left(K_1(r,B_1(k))+\frac{B_r(k)}{m_r(g_k-1)} K_1\left(\strut s_r(g-1),B_1(k)\right) \right)
\left (
\begin{array}{c}
 B_r(k)+m_r(g_k-1)\\
 B_r(k)
\end{array}
\right ).
$$
As for $k$ sufficiently large the following holds:
$$r\mu_r\left( \frac{1-\eta}{1+\eta}\right)\leq \frac{B_r(k)}{m_r(g_k-1)}\leq (q^{\frac{r}{2}}-1)
\left( \frac{1+\eta}{1-\eta}\right).$$
Then, the best we can obtain is
$$ 
\Sigma_1(k)=
\Omega\left(
\left (
\begin{array}{c}
 B_r(k)+m_r(g_k-1)\\
 B_r(k)
\end{array}
\right )
\right).
$$
Using Lemma \ref{mainlemma} we obtain conclude that
$$\Sigma_1(k) \in \Omega\left(\frac{q_1^{g_k}}{\sqrt{g_k}}\right),$$
where
$$q_1=\left(\frac{\left(\mu_r+\frac{1}{r} \right)^{\mu_r+\frac{1}{r}}}
{\mu_r^{\mu_r}\left(\frac{1}{r} \right)^{\frac{1}{r}}} \right)^{1 -\eta}.$$

\end{proof}

\begin{proposition}
The value $q_1$ introduced in the previous proposition is such that
 \begin{itemize}
  \item if $q \geq 4$ or $r \geq 2$ 
then $q_1<q$;
  \item if $r=1$ and $q=2$ or $q=3$ then for $\eta$ sufficiently small
$$q_1 < \left(\frac{q^r}{q^r-1}\right)^{\mu_1(1-\eta)}q.$$
\end{itemize}
\end{proposition}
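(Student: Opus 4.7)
The plan is to reduce both claims to elementary inequalities for the real function
$$\phi(x)=\frac{\left(x+\frac{1}{r}\right)^{x+\frac{1}{r}}}{x^{x}\left(\frac{1}{r}\right)^{\frac{1}{r}}},$$
since $q_{1}=\phi(\mu_{r})^{1-\eta}$. A short calculation gives $(\log\phi)'(x)=\log\!\left(1+\frac{1}{rx}\right)>0$, so $\phi$ is strictly increasing on $(0,\infty)$, with $\phi(0^{+})=1$. Combined with the Drinfeld--Vladut type bound $\mu_{r}\leq(q^{r/2}-1)/r$ already used in (\ref{meq1}), this reduces the problem to evaluating $\phi$ at the endpoint $x_{0}=(q^{r/2}-1)/r$. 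Writing $N=q^{r/2}$, so that $x_{0}+1/r=N/r$, the powers of $r$ cancel and one obtains the clean expression
$$\phi(x_{0})=\frac{N^{N/r}}{(N-1)^{(N-1)/r}}.$$

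For the first item, since $q=N^{2/r}$, the bound $\phi(x_{0})\leq q$ is equivalent, after taking $r$th powers and rearranging, to $N(1-1/N)^{N-1}\geq 1$. I would establish this inequality by studying $H(N):=(N-1)\log(N-1)-(N-2)\log N$: one checks $H(2)=0$ and $H'(N)=\log(1-1/N)+2/N$, and the expansion $-\log(1-1/N)=1/N+1/(2N^{2})+\cdots$ gives $H'(N)>0$ for all $N\geq 2$ (the dominant term $1/N$ beats the remainder, which is bounded by $1/(2N(N-1))$). Hence $H(N)\geq 0$ on $[2,\infty)$ with equality only at $N=2$, corresponding to the boundary pairs $(q,r)=(4,1)$ and $(q,r)=(2,2)$. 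Whenever $q\geq 4$ or $r\geq 2$ we have $N\geq 2$ and so $\phi(\mu_{r})\leq q$; since $\phi(\mu_{r})>1$ by monotonicity, either $\phi(\mu_{r})<q$, in which case $q_{1}=\phi(\mu_{r})^{1-\eta}<\phi(\mu_{r})<q$, or $\phi(\mu_{r})=q$, in which case $q_{1}=q^{1-\eta}<q$ for any $\eta>0$.

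For the second item ($r=1$ and $q\in\{2,3\}$) the key estimate has essentially already been proved inside Proposition~\ref{somme2}: its final step establishes (\ref{equagene}), namely that for $\eta$ sufficiently small
$$\left(\frac{(\mu_{1}+1)^{\mu_{1}+1}}{\mu_{1}^{\mu_{1}}}\right)^{1+\eta}\!\!\left(\frac{q-1}{q}\right)^{\mu_{1}(1-\eta)}\!\frac{1}{q}<1.$$
Since $(\mu_{1}+1)^{\mu_{1}+1}/\mu_{1}^{\mu_{1}}>1$, the weaker inequality with exponent $1-\eta$ in place of $1+\eta$ also holds; rearranging and using $q^{1-\eta}<q$ then yields
$$q_{1}=\left(\frac{(\mu_{1}+1)^{\mu_{1}+1}}{\mu_{1}^{\mu_{1}}}\right)^{1-\eta}<\left(\frac{q}{q-1}\right)^{\mu_{1}(1-\eta)}\!q^{1-\eta}<\left(\frac{q}{q-1}\right)^{\mu_{1}(1-\eta)}\!q,$$
as required.

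The main technical point is the elementary inequality $N(1-1/N)^{N-1}\geq 1$ for real $N\geq 2$ together with the identification of its equality case $N=2$, which is what isolates the two exceptional pairs $(q,r)$ excluded from the first item. The rest is bookkeeping: differentiating $\phi$, invoking the Drinfeld--Vladut bound, and reusing the estimates already proved earlier in the paper.
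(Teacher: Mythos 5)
Your proof is correct, and its overall skeleton matches the paper's: both arguments rest on the monotonicity of the same function ($v$ in the paper, your $\phi$), push $\mu_r$ up to the Drinfeld--Vladut endpoint $\frac{1}{r}(q^{r/2}-1)$, and for the case $r=1$, $q\in\{2,3\}$ fall back on the inequality (\ref{equagene}) from the proof of Proposition \ref{somme2} exactly as the paper does (your observation that the exponent $1+\eta$ there can be weakened to $1-\eta$ because the base exceeds $1$ is the right way to extract the stated bound). Where you genuinely diverge is the first bullet: the paper disposes of the endpoint value by citing the proof of Lemma \ref{remainder1}, i.e.\ the chain $\bigl(\frac{q^{r/2}}{q^{r/2}-1}\bigr)^{(q^{r/2}-1)/r}<e^{1/r}$ from (\ref{durdur}) together with a comparison of $e^{1/r}$ to $\sqrt{q}$ and a direct check of the two equality cases, whereas you reformulate the endpoint inequality as $N(1-1/N)^{N-1}\geq 1$ with $N=q^{r/2}$ and prove it for all real $N\geq 2$ by showing $H(N)=(N-1)\log(N-1)-(N-2)\log N$ vanishes at $N=2$ and has positive derivative. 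Your route is more self-contained and, in fact, slightly more careful: the paper's appeal to (\ref{durdur}) alone does not literally settle cases such as $r=1$, $q=5$ or $q=7$, where $e^{1/r}=e>\sqrt{q}$, so the claim ``in all the other cases we derive from (\ref{durdur})'' needs the extra monotonicity you supply; your uniform inequality $H(N)\geq 0$ covers every pair with $N\geq 2$ at once and cleanly isolates the equality cases $(q,r)=(4,1)$ and $(2,2)$. The price is a small amount of extra calculus, but nothing beyond the elementary expansion $-\log(1-1/N)=\sum_{k\geq1}\frac{1}{kN^k}$ you invoke.
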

\begin{proof}
 The function 
$$v(x)= \left(\frac{\left(x+\frac{1}{r} \right)^{x+\frac{1}{r}}}
{x^{x}\left(\frac{1}{r} \right)^{\frac{1}{r}}} \right)$$
is an increasing function. Then
$$v(\mu_r) \leq v\left(\frac{1}{r}\left(q^{\frac{r}{2}}-1\right) \right),$$
namely,
$$q_1 \leq \left(\left(\frac{q^{\frac{r}{2}}}{q^{\frac{r}{2}}-1}\right)^{\frac{1}{r}(q^{\frac{r}{2}}-1)}  
\sqrt{q}\right)^{1 -\eta}.$$
We refer to the proof of Lemma \ref{remainder1} to see that when $q \geq 4$ or $r\geq 2$,
the right member of
this last formula is $\leq q^{1-\eta}$.
When $r=1$ and $q=2$ or $q=3$ the formula (\ref{equagene}) obtained
in the proof of Proposition \ref{somme2} holds and gives us the result.
\end{proof}

\begin{conclusion}
We conclude that in any case, the lower bound found on $\Sigma_2$ is better than
the lower bound found on $\Sigma_1$.
\end{conclusion}

\subsection{Asymptotical lower bound on the class number}

In this section, we obtain asymptotical lower bounds on the class number of certain sequences of algebraic function fields
as the consequence of the conclusion of the previous section and of Proposition \ref{somme2}.

\begin{theoreme}\label{mainun}
Let ${\mathcal F}/\F_q=(F_k/\F_q)_k$ be a sequence of function fields over a finite field $\F_q$. 
Let us denote by $g_k$ the genus of
$F_k$, by $h(F_k/\F_q)$ the class number of $F_k/\F_q$ and by $B_i(F_k/\F_q)$ the number of places of degree $i$ of $F_k/\F_q$.  
Let us suppose that for any $k$ we have $B_1(F_k/\F_q)\geq 1$ and that there is an integer $r \geq 1$ such that
$$\liminf_{k\rightarrow\infty} \frac{B_r(F_k/\F_q)}{g_k}= \mu_r({\mathcal F}/\F_q) >0.$$
Then
for any $\alpha$ such that $0<\alpha<\mu_r({\mathcal F}/\F_q)$
$$h(F_k/\F_q) \in\Omega\left(\left({\left(\frac{q^r}{q^{r}-1}\right)^{\alpha}q}\right)^{g_k}\right).$$
\end{theoreme}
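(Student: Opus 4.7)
The plan is to feed the asymptotic estimates already established in Section \ref{sec:asymptotic} into the Lachaud--Martin-Deschamps identity $S(F_k/\F_q) = h(F_k/\F_q)\, R(F_k/\F_q)$ from equation (\ref{mainformula}). Writing $h(F_k/\F_q) = S(F_k/\F_q)/R(F_k/\F_q)$ and discarding the non-negative piece $\Sigma_1(k)$ from $S(F_k/\F_q) = \Sigma_1(k) + \Sigma_2(k)$, the entire argument reduces to two ingredients: a polynomial upper bound on $R(F_k/\F_q)$ and an exponential lower bound on $\Sigma_2(k)$ with the correct base.

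For the denominator, I would invoke inequality (\ref{R2}) together with the hypothesis $B_1(F_k/\F_q) \geq 1$ to obtain
$$R(F_k/\F_q) \leq \frac{(g_k+1)(q+1) - 1}{(q-1)^2},$$
which is $O(g_k)$. For the numerator, I would fix an auxiliary exponent $\alpha'$ with $\alpha < \alpha' < \mu_r({\mathcal F}/\F_q)$ and apply the corollary following Proposition \ref{somme2} at $\alpha'$, yielding
$$\Sigma_2(k) \in \Omega\left(\left[\left(\frac{q^r}{q^r-1}\right)^{\alpha'} q\right]^{g_k}\right).$$
Dividing the two bounds gives
$$h(F_k/\F_q) \in \Omega\left(\frac{1}{g_k}\left[\left(\frac{q^r}{q^r-1}\right)^{\alpha'} q\right]^{g_k}\right).$$

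The final step is a standard trick for absorbing a polynomial factor into exponential slack. Since $\alpha' > \alpha$ and $\frac{q^r}{q^r-1} > 1$, the constant $\left(\frac{q^r}{q^r-1}\right)^{\alpha' - \alpha}$ is strictly greater than $1$, so
$$\frac{1}{g_k}\left[\left(\frac{q^r}{q^r-1}\right)^{\alpha'-\alpha}\right]^{g_k} \longrightarrow +\infty,$$
and the claimed bound $h(F_k/\F_q) \in \Omega\big(\big[(q^r/(q^r-1))^{\alpha} q\big]^{g_k}\big)$ follows at once. Since the technical work — the reduction $Q_r(k) \sim (q^r/(q^r-1))^{B_r(k)}$ via Lemma \ref{remainder1} and the $\liminf$/subsequence argument handling the exceptional cases $r=1,\, q\in\{2,3\}$ in Proposition \ref{somme2} — has already been completed, no real obstacle remains; the whole theorem amounts to combining (\ref{mainformula}), (\ref{R2}), and the corollary of Proposition \ref{somme2}, with the only subtlety being the choice of the auxiliary $\alpha'$ to create exponential headroom for absorbing $1/g_k$.
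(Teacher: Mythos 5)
Your proposal is correct and follows essentially the same route the paper intends: the paper gives no explicit proof of Theorem \ref{mainun}, presenting it as an immediate consequence of the identity (\ref{mainformula}), the upper bound (\ref{R2}) on $R$, and the Corollary of Proposition \ref{somme2} (via the Conclusion that the $\Sigma_2$ bound dominates). Your only addition is to make explicit the choice of an intermediate exponent $\alpha'$ with $\alpha<\alpha'<\mu_r$ so that the polynomial factor $1/g_k$ coming from $R(F_k/\F_q)=O(g_k)$ is absorbed into the exponential slack, which is exactly the detail the paper leaves implicit.
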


Moreover, we can improve the previous theorem in the following way:

\begin{theoreme}\label{maindeux}
Let ${\mathcal F}/\F_q=(F_k/\F_q)_k$ be a sequence of function fields over a finite field $\F_q$ and 
let $r\geq 1$ an integer.
Let ${\mathcal G}/\F_{q^r}=(G_k/\F_{q^r})_k$ be the degree $r$ 
constant field extension sequence of the sequence ${\mathcal F}/\F_q$, namely
for any $k$ we have $G_k/\F_{q^r}=F_k \otimes_{\F_q} \F_{q^r}$. 
Let us denote by $g_k$ the genus of
$F_k$ and $G_k$, by $h(F_k/\F_q)$ the class number of $F_k/\F_q$ and by $B_r(F_k/\F_{q})$ the number of places of degree $r$ of $F_k/\F_q$.  
Let us suppose that 
$$\liminf_{k\rightarrow\infty} \sum_{i\mid r}\frac{iB_i(F_k/\F_{q})}{g_k}= \mu_1({\mathcal G}/\F_{q^r}) >0.$$
Then for any $\alpha$ such that $0<\alpha<\frac{1}{r}\mu_1({\mathcal G}/\F_{q^r})$
$$h(F_k/\F_q) \in\Omega\left(\left({\left(\frac{q^r}{q^{r}-1}\right)^{\alpha}q}\right)^{g_k}\right).$$
\end{theoreme}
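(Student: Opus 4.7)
The plan is to follow the blueprint of Theorem~\ref{mainun} while enlarging the family of effective divisors used to lower bound $A_n(F_k/\F_q)$: I will use divisors supported on places of all degrees $d\mid r$, not only $d\in\{1,r\}$. The link with the hypothesis comes from the splitting law in the constant field extension $G_k=F_k\otimes_{\F_q}\F_{q^r}$: this extension is separable and unramified, so $g(G_k)=g_k$, and a place of $F_k$ of degree $d\mid r$ decomposes into exactly $d$ places of $G_k$ of degree~$1$ over $\F_{q^r}$ (since $\gcd(d,r)=d$ and $\mathrm{lcm}(d,r)/r=1$). Consequently
$$B_1(G_k/\F_{q^r})=\sum_{d\mid r}d\,B_d(F_k/\F_q),$$
which is the quantity appearing in the hypothesis.

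Restricting the multi-index $b$ of Proposition~\ref{propoAn} to those supported on divisors of~$r$ yields
$$A_n(F_k/\F_q)\;\geq\;a_n(k)\;:=\;[T^n]\prod_{d\mid r}\frac{1}{(1-T^d)^{B_d(F_k)}}.$$
The heart of the proof is to show that, setting $T=1/q$ and truncating at $n=g_k-2$,
$$\sum_{n=0}^{g_k-2}a_n(k)\,q^{-n}\;\in\;\Omega\!\left(\prod_{d\mid r}\Bigl(\frac{q^d}{q^d-1}\Bigr)^{B_d(F_k)}\right).$$
I would decouple the multi-variable truncation via a Cartesian box $\{b_d\leq M_d\}_{d\mid r}$ with $M_d=\lfloor(g_k-2)/(d\,\tau(r))\rfloor$, where $\tau(r)$ is the number of positive divisors of~$r$; the constraint $\sum_{d\mid r}d\,b_d\leq g_k-2$ is then automatic, and the lower bound factors as $\prod_{d\mid r}\sum_{b=0}^{M_d}\binom{B_d+b-1}{b}q^{-db}$. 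Each factor is precisely the one-variable quantity analysed in Proposition~\ref{somme2} and Lemma~\ref{remindermaj} (with the paper's $r$ replaced by~$d$ and $q$ unchanged), so Stirling-based remainder estimates identical to those used there show that each factor captures, up to a factor bounded away from zero, the full value $(q^d/(q^d-1))^{B_d(F_k)}$. Multiplying by $q^{g_k-1}$ gives $\Sigma_2(k)\in\Omega\!\bigl(q^{g_k}\prod_{d\mid r}(q^d/(q^d-1))^{B_d(F_k)}\bigr)$.

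The final step is an elementary monotonicity inequality that reshapes the product into the required form. The function $\phi(d):=-\log(1-q^{-d})/d=\sum_{j\geq 1}q^{-jd}/(jd)$ is strictly decreasing in~$d$, so for every $d\mid r$ one has $\log(q^d/(q^d-1))\geq(d/r)\log(q^r/(q^r-1))$; weighting by $B_d(F_k)$ and exponentiating,
$$\prod_{d\mid r}\Bigl(\frac{q^d}{q^d-1}\Bigr)^{B_d(F_k)}\;\geq\;\Bigl(\frac{q^r}{q^r-1}\Bigr)^{B_1(G_k)/r}.$$
Combining this with $h=S/R$ and the upper bound $R\leq(g_k+1)(q+1)/(q-1)^2$ from~\eqref{R2} produces $h(F_k/\F_q)\in\Omega\bigl(g_k^{-1}q^{g_k}(q^r/(q^r-1))^{B_1(G_k)/r}\bigr)$. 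For every $\alpha<\mu_1(\mathcal{G}/\F_{q^r})/r$, the hypothesis furnishes $B_1(G_k)/r\geq\alpha g_k$ for $k$ large, and the polynomial factor $1/g_k$ is absorbed into the exponential $\Omega$-bound, delivering the stated estimate.

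The hard part will be the multi-variable tail analysis: Lemma~\ref{remindermaj} is stated for a single series, so the novelty lies in the Cartesian-box reduction. One must verify that shrinking each per-variable truncation by the factor~$\tau(r)$ still makes each one-dimensional tail negligible compared to the corresponding full sum. This is automatic when $B_d(F_k)$ stays bounded (since $M_d\to\infty$), and when $B_d(F_k)$ grows linearly in $g_k$ it follows---at the cost of carefully distinguishing small-parameter regimes as in the proof of Lemma~\ref{remainder1}---from the same Stirling-based asymptotics used in Lemma~\ref{mainlemma}.
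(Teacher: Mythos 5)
Your overall architecture (restrict to divisors supported on places of degree $d\mid r$, identify $\sum_{d\mid r}dB_d(F_k)$ with $B_1(G_k)$, bound $R$ by (\ref{R2}), and absorb polynomial factors using the strict inequality $\alpha<\mu_1/r$) matches the paper, and your monotonicity inequality $\prod_{d\mid r}(q^d/(q^d-1))^{B_d}\geq(q^r/(q^r-1))^{B_1(G_k)/r}$ is correct and is an attractive substitute for part of the paper's combinatorial device. But the central analytic step is broken. The claim that each truncated factor $\sum_{b=0}^{M_d}\binom{B_d+b-1}{b}q^{-db}$ with $M_d=\lfloor(g_k-2)/(d\,\tau(r))\rfloor$ captures a constant fraction (or even the right exponential order) of the full series $(q^d/(q^d-1))^{B_d}$ is false in general. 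The terms $u_b=\binom{B_d+b-1}{b}q^{-db}$ increase up to $b^*\approx B_d/(q^d-1)$, so the truncated sum is exponentially smaller than the full sum whenever $M_d$ falls short of $b^*$. Take $q=2$, $d=1$, $r$ with $\tau(r)\geq 3$, and $B_1(F_k)\sim(\sqrt2-1)g_k\approx 0.41\,g_k$ (permitted by the Drinfeld--Vladut bound): then $b^*\approx B_1\approx 0.41\,g_k$ while $M_1\leq g_k/3$, and a Stirling computation gives $\sum_{b\leq M_1}u_b\leq g_k\,e^{0.24\,g_k}$ versus $2^{B_1}=e^{0.29\,g_k}$. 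So your intermediate assertion $\sum_{n\leq g_k-2}a_n(k)q^{-n}\in\Omega\bigl(\prod_{d\mid r}(q^d/(q^d-1))^{B_d}\bigr)$ fails, and the sentence claiming the tail analysis ``follows from the same Stirling-based asymptotics'' is exactly where the proof breaks: the verification you defer is not routine, it is false as stated.

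The failure is structural rather than a matter of constants: splitting the degree budget $g_k-2$ equally among the $\tau(r)$ divisors can place several $M_d$ below the corresponding peaks, and even an adaptive reallocation requires checking that $\sum_{d\mid r}d\,b_d^*\leq g_k-2$, which for small $q$ needs the joint (generalized Drinfeld--Vladut) constraint on the $B_d$, not just the individual bounds. The paper sidesteps the multivariate tail entirely: it replaces the system of places of degrees $d\mid r$ by a fictitious system consisting of one symbol of degree $1$ and roughly $\frac1r\sum_{d\mid r}dB_d$ symbols of degree $r$, argues by a direct counting comparison that this can only decrease the number of effective divisors of each degree $n$, and then reuses the one-variable analysis of Proposition \ref{somme2}, where the truncation point $\approx g_k/r$ provably lies beyond the peak $\approx B_r/(q^r-1)\leq g_k/(r(q^{r/2}+1))$. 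To salvage your route you would need either that reduction or an adaptive choice of the $M_d$ justified by the generalized Drinfeld--Vladut inequality; as written, the proof has a genuine gap.
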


\begin{proof}
For $r=1$ the result is yet proved by the previous study, then we suppose $r>1$.
We know that
$$B_1(G_k/\F_{q^r})=\sum_{i | r} iB_i(F_k/\F_q),$$
then
$$\liminf_{k\rightarrow\infty} \sum_{i\mid r}\frac{iB_i(F_k/\F_{q})}{g_k}=
\liminf_{k\rightarrow\infty}\frac{B_1(G_k/\F_{q^r})}{g_k}=\mu_1({\mathcal G}/\F_{q^r}).$$
Let us consider the families
${\mathcal D}=( D_i )_{i\mid r}$ where $D_i=\{Q_{i,1}, \cdots, Q_{i,c_i}\}$ and where the $c_i$
elements $Q_{i,j}$ are formal symbols. Given a positive integer $n$, let us denote by $N_n({\mathcal D})$ 
the number of formal combinations 
$\sum_{i,j}\alpha_{i,j} Q_{i,j}$
such that all the $\alpha_{i,j}$ are positive and $\sum_i i\sum_j \alpha_{i,j}=n$.
For each $k$ we define  the two following families ${\mathcal D}_{k}$ and ${\mathcal D}'_{k}$:
\begin{itemize}
 \item for ${\mathcal D}_{k}$, $D_i$ is the set of places of degree $i$ of the function field
$F_k/\F_q$ and consequently $c_i(k)=B_i(F_k/\F_q)$.
\item for ${\mathcal D}'_{k}$, we choose $c'_1(k)=1$ and $c'_r(k)$ such that
$\sum_{i \mid r}ic_i(k) = rc'_r(k) +s_k+1$ with $0 \leq s_k \leq r-1$. For $1<i<r$
we choose $c_i(k)=0$. Then  the following holds:
$$c'_r(k)=\frac{1}{r}\left(\sum_{i \mid r}iB_i(F_k/\F_q)-s_k-1\right).$$ 
We note that 
$$\sum_{i|n}ic'_i(k)=1+rc'_r(k)\leq \sum_{i|r} ic_i,$$
namely the total number of points in $\cup D'_i$ is less than the number of points 
in $\cup D_i$ and their degrees are bigger. We conclude that 
$N_n({\mathcal D}_k) \geq N_n({\mathcal D'}_k)$.
\end{itemize}
We also have
$$\liminf_{k\rightarrow\infty}\frac{c'_r(k)}{g_k}=\frac{1}{r}\liminf_{k\rightarrow\infty}
\frac{\sum_{i|r} iB_i(F_k/\F_q)-s_k-1}{rg_k}=\frac{\mu_1({\mathcal G}/\F_{q^r})}{r}.$$
Using the Drinfeld Vladut bound on the constant field extension $(G_k/F_{q^r})_k$ of
$(F_k/F_q)_k$ we get
$$\limsup_{k\rightarrow\infty}\frac{c'_r(k)}{g_k}=\frac{1}{r}\limsup_{k\rightarrow\infty}
\frac{\sum_{i|r} iB_i(F_k/\F_q)-s_k-1}{rg_k}=$$
$$\frac{1}{r}\limsup_{k\rightarrow\infty}
\frac{B_1(G_k/\F_{q^r})-s_k-1}{rg_k}\leq
\frac{1}{r}\left (q^{\frac{r}{2}}-1\right).$$
Then we can apply the previous study with $\mu=\frac{\mu_1({\mathcal G}/\F_{q^r})}{r}$
to find an asymptotic lower bound on $N_n({\mathcal D'})_k$
and consequently an asymptotic lower bound on $N_n({\mathcal D})_k$ and on $h(F_k/\F_q)$.
\end{proof}

Note that Theorems \ref{mainun} and \ref{maindeux} apply under weaker assumptions 
than those of Theorem \ref{theoh2} due to Tsfasman \cite{tsfa} (cf also \cite{tsvlno} for a survey) since we do not need to know 
if the considered family of curves is asymptotically exact over $\F_q$ which is a very strongh property 
(not even to know if the limit $\lim_{k\rightarrow\infty} \frac{B_r(F_k/\F_q)}{g_k}$ exists).
 Moreover, if we except the case of recursive towers, the property to be asymptotically exact seems  
to be enough difficult to be proved 
 (cf. Remark 5.2 in \cite{tsvl} and \cite{baro4}). However, in the case where the family of 
 algebraic functions fields ${\mathcal F}/\F_q=(F_k/\F_q)_k$ satisfies:
 $$\liminf_{k\rightarrow\infty} \frac{B_r(F_k/\F_q)}{g_k}= \mu_r({\mathcal F}/\F_q)=\frac{1}{r}(q^{\frac{r}{2}}-1),$$
 which only is possible if $q^r$ is a perfect square,  it is known that the family then is asymptotically exact over $\F_q$. 
 More precisely, the family reaches then the Generalized Drinfeld-Vladut bound by attaining the 
 Drinfeld-Vladut of order $r$ \cite{baro4}.
 This allows us us to easily compare the bound obtained in Theorem  \ref{mainun} 
 with the limit reached by the class number deduced from 
 Theorem \ref{theoh2}. A direct estimation of this limit shows that it corresponds to the estimation given 
 by Theorem \ref{mainun}. Morever, in the case $r=1$, we also obtain the same estimation 
 as the estimation obtained in Theorem \ref{theoh1} which is a particular case not requiring the assumption 
 of an  asymptotically exact sequence. Note also that even if 
 $\lim_{k\rightarrow\infty} \sum_{i\mid r}\frac{iB_i(F_k/\F_{q})}{g_k}= \mu_1({\mathcal G}/\F_{q^r})= 
q^{\frac{r}{2}}-1$, which also is 
 only possible if $q^r$ is a perfect square, Theorem \ref{maindeux} is interesting since 
this condition 
 not implying the existence of limits $\lim_{k\rightarrow\infty} \frac{B_i(F_k/\F_{q})}{g_k}$, 
the assumptions of Theorem  \ref{theoh2}
 possibly are not satisfied.

\section{Examples} \label{descenttowergast}\label{sec:examples}
In this section, we study the class number of certain known towers ${\mathcal F}/\F_q=(F_k/F_q)$ 
of algebraic functions fields defined over a finite field $\F_q$.
Let $r\geq 1$ be an integer. As previously, we consider the limit $\mu_r({\mathcal F}/\F_q)$ for the tower ${\mathcal F}/\F_q$, defined as follows: 
$$\mu_r({\mathcal F}/\F_q)=\liminf_{k\rightarrow\infty} \frac{B_r(k)}{g_k}.$$ 

\subsection{Sequences ${\mathcal F}/\F_q$ with $\mu_r({\mathcal F}/\F_q)=\frac{1}{r}(q^{\frac{r}{2}}-1)$} 

In this section, for any prime power $q$ and for any interger $r\geq 1$ such that $q^r$ is a perfect square, 
we exhibit some examples of sequences of algebraic function fields defined over $\F_{q}$ 
reaching the generalized Drinfeld-Vladut bound of order $r$ \cite{baro4}.  

Moreover, we know accurate lower bounds on the number of places of concerned degree  $r$. 
Then, by using the results of 
the previous sections, we can give lower bounds on the class number for each step of these towers and 
also a more accurate lower bound on the asymptotical class number of these towers.

We consider the Garcia-Stichtenoth's tower ${T}_{0}/{\mathbb F}_{q^r}$ over ${\mathbb F}_{q^r}$ 
constructed in \cite{gast}. Recall that this tower
is defined recursively in the following way.
We set $F_1={\mathbb F}_{q^r}(x_1)$ the rational function field over  ${\mathbb F}_{q^r}$,
and for $i \geq 1$ we define 
$$F_{i+1}=F_i(z_{i+1}),$$
where $z_{i+1}$ satisfies the equation
$$z_{i+1}^{q^{\frac{r}{2}}}+z_{i+1}=x_i^{q^{\frac{r}{2}}+1},$$
with
$$x_i=\frac{z_i}{x_{i-1}} \hbox{ for } i\geq 2.$$

Let us denote by $g_k$ the genus of $F_k$ in $T_0/{\mathbb F}_{q^r}$. 
Let $T_1/{\mathbb F}_{q^{\frac{r}{2}}}=\left(G_i/{\mathbb F}_{q^{\frac{r}{2}}}\right)$ be the descent of the tower ${T}_{0}/{\mathbb F}_{q^r}$ 
on the finite field ${\mathbb F}_{q^{\frac{r}{2}}}$  and let $T_2/{\mathbb F}_{q}=\left(H_i/{\mathbb F}_{q}\right)$ be the descent of the tower ${T}_{0}/{\mathbb F}_{q^r}$ 
on the finite field ${\mathbb F}_{q}$, namely, for any integer $i$, $$F_i=G_i\otimes_{{\mathbb F}_{q^{\frac{r}{2}}}}{\mathbb F}_{q^{r}} \hbox{ and } F_i=H_i\otimes_{{\mathbb F}_{q}}{\mathbb F}_{q^{r}}.$$

Let us prove a proposition establishing 
that the tower $T_2/{\mathbb F}_q$ reaches the Generalized Drinfeld-Vladut Bound of order $r$.

\begin{proposition}\label{subfield}
Let $q$ be prime power and $r\geq 1$ an integer such that $q^r$ is a perfect square. 
 The tower $T_2/{\mathbb F}_q$ is such that 
 
 $$\lim_{k\rightarrow +\infty}\frac{B_r(H_k)}{g_k}=\frac{1}{r}(q^{\frac{r}{2}}-1).$$
 
\end{proposition}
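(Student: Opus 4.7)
The plan is to combine two ingredients: first, the classical result of Garcia--Stichtenoth \cite{gast} that the tower $T_0/\F_{q^r}$ attains the Drinfeld--Vladut bound over $\F_{q^r}$, namely
$$\lim_{k\rightarrow +\infty}\frac{B_1(F_k/\F_{q^r})}{g_k}=q^{\frac{r}{2}}-1;$$
second, the splitting behaviour of places under the (unramified) constant field extension $F_k = H_k \otimes_{\F_q}\F_{q^r}$, which tells us that a place of $H_k/\F_q$ of degree $d$ decomposes in $F_k/\F_{q^r}$ into $\gcd(d,r)$ places of degree $d/\gcd(d,r)$. Consequently the degree-one places of $F_k/\F_{q^r}$ come exactly from places of $H_k/\F_q$ of degree $d\mid r$, each one contributing $d$ of them, yielding the identity
$$B_1(F_k/\F_{q^r}) = \sum_{d\mid r} d\cdot B_d(H_k/\F_q).$$

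Dividing by $g_k$ and invoking Garcia--Stichtenoth, the quantity $\sum_{d\mid r} d\cdot B_d(H_k)/g_k$ converges to $q^{\frac{r}{2}}-1$. Each individual ratio $B_d(H_k)/g_k$ is bounded by the classical Drinfeld--Vladut bound of order $d$, so by a diagonal extraction I may assume, along a subsequence, that $B_d(H_k)/g_k \to \beta_d$ for every $d\mid r$, with
$$\sum_{d\mid r} d\,\beta_d = q^{\frac{r}{2}}-1.$$

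The heart of the argument is to force all of this limit mass to concentrate at $d=r$. To this end, I would apply the Tsfasman--Vladut basic inequality to $(H_k/\F_q)_k$, giving
$$\sum_{d=1}^{\infty}\frac{d\,\beta_d}{q^{d/2}-1}\leq 1.$$
Restricting the left-hand side to divisors of $r$ and substituting $r\beta_r = (q^{\frac{r}{2}}-1) - \sum_{d\mid r,\,d<r} d\beta_d$ into the $d=r$ summand, a short rearrangement yields
$$1\;\geq\;\sum_{d\mid r}\frac{d\,\beta_d}{q^{d/2}-1} \;=\; 1 \;+\; \sum_{\substack{d\mid r\\ d<r}} d\,\beta_d\left(\frac{1}{q^{d/2}-1}-\frac{1}{q^{\frac{r}{2}}-1}\right).$$
Since the bracket is strictly positive whenever $d<r$ and each $\beta_d\geq 0$, all the $\beta_d$ with $d\mid r$, $d<r$ must vanish. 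Hence $r\beta_r = q^{\frac{r}{2}}-1$, i.e.\ $\beta_r=\tfrac{1}{r}(q^{\frac{r}{2}}-1)$, and since every convergent subsequence yields the same limit, the full sequence $B_r(H_k)/g_k$ converges to $\tfrac{1}{r}(q^{\frac{r}{2}}-1)$.

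The step I expect to be the most delicate is invoking the Tsfasman--Vladut basic inequality at the level of subsequential limits $\beta_d$; this is standard (it amounts to Fatou applied to the inequality valid at each finite stage), but one must verify that the tails of the sum are uniformly controlled in $k$ so that passing to the limit is legitimate. Once that is in place, the linear-programming style argument above closes the proof without further computation, simultaneously showing that no mass survives at degrees not dividing $r$ (a by-product of the strict inequality $q^{d/2}-1<q^{\frac{r}{2}}-1$).
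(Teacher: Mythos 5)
Your argument is correct, but it follows a genuinely different route from the paper's. Both proofs start from the same splitting identity $B_1(F_k/\F_{q^r})=\sum_{d\mid r}d\,B_d(H_k/\F_q)$ and from the fact that the Garcia--Stichtenoth tower attains the Drinfeld--Vladut bound over $\F_{q^r}$; the difference is in how the mass is forced to concentrate at $d=r$. The paper does this by hand: it descends first to the intermediate field $\F_{q^{\frac{r}{2}}}$, goes through the explicit classification of the degree-one places of $F_k/\F_{q^r}$ into types, and shows that a type (A) place, which is a zero of $z_k-u$ with $u^{q^{\frac{r}{2}}}+u=\gamma$, can lie over a degree-one place of $G_k/\F_{q^{\frac{r}{2}}}$ only when $u\in\F_{q^{\frac{r}{2}}}$, which happens for at most $q^{\frac{r}{2}}-1$ values (and never in characteristic $2$); this gives $B_1(G_k)/g_k\to 0$ directly and hence $B_i(H_k)/g_k\to 0$ for all proper divisors $i$ of $r$. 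You instead run a soft linear-programming argument: pass to an asymptotically exact subsequence and feed the constraint $\sum_{d\mid r}d\beta_d=q^{\frac{r}{2}}-1$ into the Tsfasman--Vladut basic inequality $\sum_m m\beta_m/(q^{m/2}-1)\le 1$, whose strict monotonicity in $d$ kills every $\beta_d$ with $d\mid r$, $d<r$. Your version is shorter, uses nothing specific to the Garcia--Stichtenoth equations, and in fact proves the more general statement that any tower over $\F_q$ whose degree-$r$ constant field extension is asymptotically optimal satisfies the conclusion (this is essentially the generalized Drinfeld--Vladut statement the paper attributes to its reference [baro4]); the price is reliance on the basic-inequality machinery and the standard but nontrivial extraction of an asymptotically exact subfamily (which should be done for all $m$, not only $m\mid r$, though each $B_m/g$ is bounded so the diagonal argument is routine). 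The paper's explicit computation, by contrast, yields side information that it actually uses afterwards, namely the exact count of degree-one places of $G_k/\F_{q^{\frac{r}{2}}}$ and in particular the positivity of $B_1(H_k)$ required by its asymptotic theorems.
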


\begin{proof}
First, note that as the algebraic function field $F_k/\F_{q^r}$ is a constant field extension of $G_k/{\mathbb F}_{q^{\frac{r}{2}}}$, above any place of degree one in $G_k/{\mathbb F}_{q^{\frac{r}{2}}}$
there exists a unique place of degree one in $F_k/\F_{q^r}$. Consequently, let us use
the classification given in \cite[p. 221]{gast} of the places
of degree one of $F_k/\F_{q^r}$. Let us remark that the number of places of degree one
which are not of type (A), is less or equal to $2q^r$ (see
\cite[Remark 3.4]{gast}).
Moreover, the genus $g_k$ of the algebraic function fields $G_k/{\mathbb F}_{q^{\frac{r}{2}}}$ and $F_k/\F_{q^r}$ is such that $g_k\geq q^k$
by \cite[Theorem 2.10]{gast}, then we can focus our study on places of type (A).
The places of type (A) are built recursively in the following
way (cf. \cite[p. 220 and Proposition 1.1 (iv)]{gast}).
Let $\alpha \in \F_{q^r}\setminus \{0\}$ and $P_\alpha$ be 
the place of $F_1/\F_{q^r}$ which is the zero of $x_1-\alpha$. For any $\alpha\in \F_{q^r}\setminus \{0\}$ the polynomial equation
$z_2^{q^{\frac{r}{2}}}+z_2=\alpha^{q^{\frac{r}{2}}+1}$ has $q^{\frac{r}{2}}$ distinct roots $u_1,\cdots u_{q^{\frac{r}{2}}}$ in $\F_{q^r}$,
and for each $u_i$ there is a unique place $P_{(\alpha,i)}$ of $F_2/\F_{q^r}$
above $P_\alpha$ and this place $P_{(\alpha,i)}$ is a zero of $z_2-u_i$.
We iterate now the process starting from the places $P_{(\alpha,i)}$
to obtain successively the places of type (A) of $F_3/\F_{q^r}, \cdots, F_k/\F_{q^r}, \cdots$;
then, each place $P$ of type (A) of $F_k/\F_{q^r}$ is a zero of $z_k-u$
where $u$ is itself a zero of $u^{q^{\frac{r}{2}}}+u = \gamma$ for some 
$\gamma \neq 0$ in $\F_{q^r}$. Let us denote by $P_u$ this place.
Now, we want to count the number of places $P'_u$ of degree one in $G_k/\F_{q^{\frac{r}{2}}}$, that is to say the only places 
which admit a unique place of degree one $P_u$ in $F_k/\F_{q^r}$ lying over $P'_u$.

First, note that it is possible only if  $u$ is a solution in $\F_{q^{\frac{r}{2}}}$ of the equation $u^{q^{\frac{r}{2}}}+u=\gamma$ where $\gamma$ is in $\F_{q^{\frac{r}{2}}}\setminus \{0\}$.
Indeed, if $u$ is not in $\F_{q^{\frac{r}{2}}}$, there exists an automorphism $\sigma$ in the Galois group $Gal(F_k/G_k)$ 
of the degree two Galois extension  $F_k/\F_{q^r}$ of $G_k/\F_{q^{\frac{r}{2}}}$ such that $\sigma(P_u)\neq P_u$.
Hence, the unique place of $G_k/\F_{q^{\frac{r}{2}}}$ lying under $P_u$ is a place of degree $2$. 


But
$u^{q^{\frac{r}{2}}}+u=\gamma$ has one solution in $\F_{q^{\frac{r}{2}}}$
if $p \neq 2$ and no solution in $\F_{q^{\frac{r}{2}}}$
if $p = 2$. Hence the number of places of degree one of $G_k/\F_{q^{\frac{r}{2}}}$
which are lying under a place of type (A) of $F_k/\F_{q^r}$ is equal to zero if $p=2$ and equal to ${q^{\frac{r}{2}}}-1$ if $p\neq2$.
We conclude that
$$\lim_{k\rightarrow +\infty} \frac{B_1(G_k/\F_{q^{\frac{r}{2}}})}{g_k}=0. $$
Let us remark that in any case, the number of places of degree one 
of $G_k/\F_{q^{\frac{r}{2}}}$ is less or equal to $2q^r$. 
Moreover, as $G_k/\F_{q^{\frac{r}{2}}}$ is the constant field extension of $H_k/\F_{q}$ of degree $\frac{r}{2}$, any place of  
$H_k/\F_{q}$  of degree $i<r$ dividing $r$ is totally decomposed in $G_k/\F_{q^{\frac{r}{2}}}$ and so is 
a place of $G_k/\F_{q^{\frac{r}{2}}}$ lying under only some places of degree one (at most $\frac{r}{2}$) belonging to $G_k/\F_{q^{\frac{r}{2}}}$.
Hence, we have for any integer $i$ dividing $r$ such that $i<r$, $$\lim_{k\rightarrow +\infty} \frac{B_i(H_k/\F_{q})}{g_k}=0. $$
Then, as the Garcia-Stichtenoth tower $T_0/\F_{q^r}$ attains the Drinfeld-Vladut bound \cite{gast}, we deduce the result 
by the relation $\sum_{i=1,i\mid r}^{r}iB_i(H_k/\F_{q})=B_r(F_k/\F_{q^r})$.

\end{proof}

\begin{theoreme}
Let $q$ be prime power and $r\geq 1$ an integer such that $q^r$ is a perfect square.
Let us consider the algebraic function fields  $F_{k}/{\mathbb F}_{q^r}$ and $H_{k}/{\mathbb F}_{q}$ 
of genus $g_k$ 
constituting respectively
the towers $T_0/\F_{q^r}$ and $T_2/{\mathbb F}_{q}$.
%
%
%
There are two positive real numbers $c$ and $c'$ such that the following holds:
for any $\epsilon >0$
there exists an integer $k_0$ such that
for any integer $k\geq k_0$, we have 
$$h(F_k/{\mathbb F}_{q^r})\geq c\left(\frac{q^r}{q^r-1}\right)^{B_1(F_k/{\mathbb F}_{q^r})}q^{rg_k},$$
where  $B_1(F_k/{\mathbb F}_{q^r})\geq (q^{r}-1).q^{\frac{r}{2}(k-1)}+2q^{\frac{r}{2}}$ and
$$h(H_k/{\mathbb F}_{q})\geq c'\left(\frac{q^r}{q^r-1}\right)^{B_r(H_k/{\mathbb F}_{q})}q^{g_k},$$ 
where $B_r(H_k/{\mathbb F}_{q})\geq \frac{g_k}{r}\left(q^{\frac{r}{2}}-1\right)(1-\epsilon)$ 
and 
$$g_k=\left\{
\begin{array}{ll}
(q^{r})^n+(q^{r})^{n-1}-(q^{r})^{\frac{n+1}{2}}-2(q^{r})^{\frac{n-1}{2}}+1 & \hbox{ if } n \hbox{ is odd },\\
(q^{r})^n+(q^{r})^{n-1}-\frac{1}{2}(q^{r})^{\frac{n}{2}+1}-\frac{3}{2}(q^{r})^{\frac{n}{2}}
-(q^r)^{\frac{n}{2}-1}+1 & \hbox{ if } n \hbox{ is even }.
\end{array} \right .
$$

\end{theoreme}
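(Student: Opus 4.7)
The plan is to handle the two function fields in parallel, each time feeding the quantitative information about places and genera into the general machinery of Sections 2--4. For $T_0/\mathbb{F}_{q^r}$, the genus formula and the bound $B_1(F_k/\mathbb{F}_{q^r}) \geq (q^r-1)q^{r(k-1)/2}+2q^{r/2}$ are supplied by \cite{gast} (Theorem~2.10 together with the counting of type (A) places), after identifying $q^{r/2}$ with the parameter ``$q$'' of the original Garcia--Stichtenoth presentation. For $T_2/\mathbb{F}_q$, Proposition \ref{subfield} gives $\lim_k B_r(H_k/\mathbb{F}_q)/g_k = (q^{r/2}-1)/r$, from which the stated lower bound on $B_r(H_k/\mathbb{F}_q)$ for $k \geq k_0(\epsilon)$ is immediate by definition of the limit.

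To derive the class number estimate for $H_k/\mathbb{F}_q$, I would apply Theorem \ref{mainun} (or, more precisely, Proposition \ref{somme2}) with $\mu_r = (q^{r/2}-1)/r > 0$, which yields
\[
\Sigma_2(H_k/\mathbb{F}_q) \in \Omega\!\left(\left(\frac{q^r}{q^r-1}\right)^{B_r(H_k/\mathbb{F}_q)} q^{g_k}\right),
\]
and combine this with the upper bound \eqref{R2} on $R(H_k/\mathbb{F}_q)$ via the identity $h = S/R$ of \eqref{mainformula}. The constant $c'$ then absorbs the implied constant from Proposition \ref{somme2} together with the polynomial factor of $g_k$ coming from the bound on $R$.

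For $F_k/\mathbb{F}_{q^r}$ I would repeat the same strategy, but now with base field $\mathbb{F}_{q^r}$ and taking ``$r$''$=1$ in Theorem \ref{finite}. This gives $\Sigma_2(F_k/\mathbb{F}_{q^r}) \geq (q^r)^{g_k-1} Q_1(k)$ where $Q_1(k)=\sum_{m=0}^{g_k-2} q^{-rm}\binom{B_1(k)+m-1}{B_1(k)-1}$. Since $B_1(F_k/\mathbb{F}_{q^r})+1 \leq (g_k-3)(q^r-1)$ for large $k$ by the Weil bound, Lemma \ref{minoqr2} applies and reduces the task to controlling the remainder $B_1(k)\binom{B_1(k)+g_k-3}{B_1(k)} q^{-r(g_k-2)}$. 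The main obstacle is precisely this control, because Proposition \ref{somme2} cannot be invoked as a black box here: one has $B_1(F_k/\mathbb{F}_{q^r})/g_k \to 0$, violating its hypothesis. I would argue directly from the explicit growth rates $B_1(k)\sim q^{r(k+1)/2}$ and $g_k \sim q^{rk}$, applying Stirling to the binomial coefficient, to show that the remainder is $o\!\bigl((q^r/(q^r-1))^{B_1(k)}\bigr)$. Once that is done, the bound on $h(F_k/\mathbb{F}_{q^r})$ follows by dividing $\Sigma_2$ by the upper bound on $R(F_k/\mathbb{F}_{q^r})$ given by \eqref{R2} and absorbing the resulting polynomial factor of $g_k$ into the constant $c$.
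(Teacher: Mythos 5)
Your treatment of $H_k/\F_q$ follows the paper: Proposition \ref{subfield} gives $\liminf_k B_r(H_k/\F_q)/g_k=(q^{r/2}-1)/r>0$, and the general machinery (the paper invokes Theorem \ref{maindeux}, you invoke Theorem \ref{mainun} via Proposition \ref{somme2}; both apply here) yields the bound. The problem is your treatment of $F_k/\F_{q^r}$. The premise on which you base your detour --- that $B_1(F_k/\F_{q^r})/g_k\to 0$, so that Proposition \ref{somme2} cannot be used --- is false. The Garcia--Stichtenoth tower $T_0/\F_{q^r}$ attains the Drinfeld--Vladut bound, i.e. $\lim_k B_1(F_k/\F_{q^r})/g_k=q^{r/2}-1>0$; this is exactly the property the paper cites ($\beta_1(T_0/\F_{q^r})=q^{r/2}-1$), and it is also the fact underlying Proposition \ref{subfield}, which you accept for the other half of the theorem, so your proposal is internally inconsistent. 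Your asymptotics $g_k\sim q^{rk}$ come from reading the genus formula displayed in the statement literally; that formula is the Garcia--Stichtenoth genus formula with the wrong parameter substituted ($q^r$ where $q^{r/2}$ is required), and as printed it is incompatible with $B_1(F_k)\geq(q^r-1)q^{\frac{r}{2}(k-1)}$ meeting Drinfeld--Vladut. The correct growth is $g_k\sim q^{rk/2}$, so $B_1(F_k)/g_k\to q^{r/2}-1$. The paper's route is therefore simply to apply Theorem \ref{mainun} (equivalently Proposition \ref{somme2} with ``$r$''$=1$ over the base field of size $q^r\geq 4$, so Lemma \ref{remainder1} applies) to $F_k/\F_{q^r}$; no separate remainder analysis is needed. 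Your fallback via Lemma \ref{minoqr2} plus Stirling would in fact still give $Q_1(k)\sim\left(q^r/(q^r-1)\right)^{B_1(k)}$ under the true growth rates (it essentially reproves Lemma \ref{remainder1}), so the detour is recoverable, but the justification you give for it is wrong and the growth rates you propose to ``argue directly from'' are not the actual ones.

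One further caution: at the end of both halves you propose to ``absorb the resulting polynomial factor of $g_k$ into the constant''. The factor $1/g_k$ coming from the bound (\ref{R2}) on $R$ is unbounded and cannot be absorbed into a fixed constant $c$ while keeping the exponent exactly $B_1(F_k)$ (resp.\ $B_r(H_k)$); one must either retain the $1/g_k$, or trade it against exponential slack by lowering the exponent to $\alpha g_k$ with $\alpha<\mu_r$ as in Theorems \ref{mainun} and \ref{maindeux}. The paper's own one-line proof glosses over this point as well, but you should not present the absorption as automatic.
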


\begin{proof}
By a property of the Garcia-Stichtenoth tower \cite{gast} and by Proposition \ref{subfield}, we have
$$\beta_1(T_0/{\mathbb F}_{q^r})= 
\lim_{g_{k}\rightarrow +\infty} \frac{B_1(F_{k}/{\mathbb F}_{q^r})}{g_{k}} =q^{\frac{r}{2}}-1$$ 
and 
$$\beta_r(T_2/{\mathbb F}_q)= 
\lim_{g_{k}\rightarrow +\infty} \frac{B_r(H_{k}/{\mathbb F}_q)}{g_{k}} =
\frac{1}{r}(q^{\frac{r}{2}}-1).$$ 
Moreover, we have $B_1(F_k/{\mathbb F}_{q^r})\geq (q^{r}-1).q^{\frac{r}{2}(k-1)}+2q^{\frac{r}{2}}$ by 
\cite{gast}  and we have $B_1(H_k/{\mathbb F}_{q})>0$ by the proof of Theorem \ref{subfield}. 
Hence, we have also
$$\sum_{i\mid r}iB_i(H_k/\F_{q})\geq (q^{r}-1).q^{\frac{r}{2}(k-1)}+2q^{\frac{r}{2}}.$$ 
Then, we can use Theorem \ref{mainun} for the tower $T_0/\F_{q^r}$ 
and use Theorem \ref{maindeux}  for the tower $T_2/\F_{q}$ for assertion (2). 
\end{proof}

\subsection{Sequences ${\mathcal F}/\F_q$ with $\mu_r({\mathcal F}/\F_q)>0$ for certain integers $r\geq 1$} 

In this section, we study the class number of few towers defined over different finite fields $\F_q$, 
whose we only know that for a certain integer $r$, we have 
$\mu_r({\mathcal F}/\F_q)>0$ or $\mu_1({\mathcal F}/\F_{q^r})$. 
Hence, we only obtain asymptotical lower bounds on the class number of these towers. 
The studied towers are all tame towers exhibited by Garcia and Stichtenoth in \cite{gast2}.
Let us recall the recursive definition of an arbitrary  tower defined over $\F_q$.

\begin{definition}
A tower ${\mathcal T}$ is defined by the equation $\psi(y)=\phi(x)$  if $\psi(y)$ and $\phi(x)$ 
are two rational functions over $\F_q$ such that 
$${\mathcal T}=\F_q(x_0,x_1,x_2,...) \hbox{ with } \psi(x_{i+1})=\phi(x_i) \hbox{ for all } i\geq 0.$$
\end{definition}

\subsubsection{Examples 1: some tame towers of Fermat type}   

Let us recall some generalities about the towers of Fermat Type \cite{gast2}.

\begin{definition}
A tower ${\mathcal T}$ over $\F_q$ defined by the equation $$y^m=a(x+b)^m+c, \hbox{ with } (m,q)=1$$ 
is said to be a Fermat tower if for each $i\geq 0$, the field $\F_q$ is algebraically closed in $F_i$ and $[F_{i+1}:F_i]=m$.
\end{definition}

Now, we can give the following result:







\begin{proposition}
Let $l$ be a power of the characteristic of $\F_q$ and let $q=l^r$ with $r\geq 1$. 
Assume that $$r\equiv 0 \mod 2 \hbox{ or } l\equiv 0 \mod 2.$$
Then the equation $$y^{l-1}=-(x+b)^{l-1}+1, \hbox{ with } b\in \F_l^*$$ 
define a tower ${\mathcal F}/\F_l=(F_0,F_1,...F_n,...)$ over $\F_l$ such that 
for any $\alpha$ satisfying $0<\alpha<\frac{2}{r(l-2)}$, we have:
$$h(F_k/\F_l) \in\Omega\left(\left({\left(\frac{q}{q-1}\right)^{\alpha}l}\right)^{g_k}\right).$$
\end{proposition}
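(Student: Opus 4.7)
The plan is to deduce this proposition from Theorem \ref{maindeux} applied with the integer $r$ appearing in the hypothesis. Since $\mathcal{F}/\F_l$ is defined over $\F_l$ and $q = l^r$, I consider the degree-$r$ constant field extension $\mathcal{G}/\F_q = (G_k)_k$ with $G_k = F_k \otimes_{\F_l} \F_q$. The whole task then reduces to establishing a lower bound of the form
$$\mu_1(\mathcal{G}/\F_q) \;=\; \liminf_{k \to \infty} \sum_{i \mid r} \frac{i\,B_i(F_k/\F_l)}{g_k} \;\geq\; \frac{2}{l-2},$$
since Theorem \ref{maindeux} then immediately gives $h(F_k/\F_l) \in \Omega\!\left(\left((q/(q-1))^\alpha l\right)^{g_k}\right)$ for every $0 < \alpha < \mu_1(\mathcal{G}/\F_q)/r$, in particular for every $0 < \alpha < 2/(r(l-2))$. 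The Kummer shape of the defining equation together with $b \in \F_l^\ast$ also guarantees the existence of at least one rational place in each $F_k/\F_l$ (e.g.\ the pole of $x_0$), so the hypothesis $B_1(F_k/\F_l) \geq 1$ of Theorem \ref{maindeux} is automatically satisfied.

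To obtain the required lower bound on $\mu_1(\mathcal{G}/\F_q)$, I would invoke the analysis of tame Fermat-type towers from Garcia and Stichtenoth \cite{gast2}. The arithmetic hypothesis ``$r$ even or $l$ even'' is used exactly to guarantee that $-1$ is an $(l-1)$-th power in $\F_q^\ast$: in characteristic two this is trivial since $-1=1$, while for $l$ odd and $r$ even we have $2(l-1) \mid l^2 - 1 \mid l^r - 1$, so the cyclic group $\F_q^\ast$ contains a $2(l-1)$-th root of unity, whose $(l-1)$-th power equals $-1$. With this in hand, one exhibits enough rational places of $\F_q(x_0)$ that split completely all the way up the tower $\mathcal{G}/\F_q$: starting from any $\alpha \in \F_q$ whose forward orbit under the map $\alpha \mapsto -(\alpha+b)^{l-1}+1$ remains in the set of $(l-1)$-th powers of $\F_q^\ast$, one obtains a rational place splitting completely at each level of $\mathcal{G}/\F_q$. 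Combining this count with the explicit genus growth formula for the tame Kummer tower from \cite{gast2} (governed by the ramification at the zero and pole of $x+b$) yields the asymptotic ratio $2/(l-2)$.

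The main obstacle is the verification that the counted family of completely split rational places, divided by the genus $g_k$, tends to precisely $2/(l-2)$. This splits into (i) a finite $(l-1)$-ary group-theoretic count of the orbits of the iteration $\alpha \mapsto -(\alpha+b)^{l-1}+1$ staying inside the subgroup of $(l-1)$-th powers of $\F_q^\ast$, and (ii) the conductor-discriminant computation for the tame Kummer step at each level, both of which fit squarely within the framework of \cite{gast2} but require careful bookkeeping. Once this limit is in place, the conclusion is then a direct application of Theorem \ref{maindeux}.
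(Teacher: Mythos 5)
Your proposal follows essentially the same route as the paper: reduce to Theorem \ref{maindeux} via the degree-$r$ constant field extension $\mathcal{G}/\F_q$ and the identity $B_1(G_k/\F_q)=\sum_{i\mid r}iB_i(F_k/\F_l)$, with the whole matter resting on $\mu_1(\mathcal{G}/\F_q)=\frac{2}{l-2}$. The one step you flag as an open ``obstacle'' --- that the completely split places divided by the genus tend to $\frac{2}{l-2}$ --- is exactly the content of Theorem 3.10 of \cite{gast2}, which the paper simply cites (the equation defines a Fermat tower $\mathcal{T}/\F_q$ with $\mu_1(\mathcal{T}/\F_q)=\frac{2}{l-2}$) rather than reproving; invoking that result closes your argument, and your sketch of why the parity hypothesis makes $-1$ an $(l-1)$-th power in $\F_q^\ast$ is consistent with the hypotheses needed there.
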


\begin{proof}
By Theorem 3.10 in \cite{gast2},  the equation $$y^{l-1}=-(x+b)^{l-1}+1, \hbox{ with } b\in \F_l^*$$ 
defines a Fermat tower ${\mathcal T}/\F_q=(T_0,T_1,...T_n,...)$ over $\F_q$ such that $\mu_1({\mathcal T}/\F_q)=\frac{2}{l-2}$.
Then the tower ${\mathcal F}/\F_l$ is the tower such that ${\mathcal T}=\F_q\otimes_{\F_l}{\mathcal F} $ is the constant field extension tower of 
${\mathcal F}$ of degree $r$. Hence, the tower 
${\mathcal F}/\F_l$ satisfies $\liminf_{k\rightarrow\infty} 
\sum_{i\mid r}\frac{iB_i(F_k/\F_{l})}{g_k}= \mu_1({\mathcal T}/\F_{q}) =\frac{2}{l-2}$ 
and we conclude by using Theorem \ref{maindeux}.
\end{proof}

We can obtain a similar result with other examples of Fermat towers exhibited in Section 3 of \cite{gast2}, in particular 
Example 3.12.

\subsubsection{Examples 2: some tame quadratic towers}   

Let us give few other examples of towers for which we are be able to give  very good asymptotics for the class number.

\begin{proposition}
Let us consider the tower  ${\mathcal F}/\F_3$ defined over $\F_3$ recursiveley by the equation $$y^2=\frac{x(x-1)}{x+1}.$$

Then the  tower ${\mathcal F}/\F_3=(F_0,F_1,...F_n,...)$ defined over $\F_3$  satisfies
$$h(F_k/\F_3) \in\Omega\left(\left(3{\left(\frac{9}{8}\right)^{\frac{1}{3}}}\right)^{g_k}\right).$$
\end{proposition}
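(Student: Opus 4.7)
The target bound matches the form $\left(\left(\frac{q^r}{q^r-1}\right)^{\alpha} q\right)^{g_k}$ appearing in Theorem \ref{maindeux} with $q = 3$, $r = 2$, and $\alpha = \frac{1}{3}$. The plan is therefore to apply Theorem \ref{maindeux} after establishing that the constant field extension $\mathcal{G}/\F_9 = (G_k/\F_9)_k$ of $\mathcal{F}/\F_3$, where $G_k = F_k \otimes_{\F_3}\F_9$, satisfies $\mu_1(\mathcal{G}/\F_9) > \frac{2}{3}$, or equivalently $\liminf_{k\to\infty} \frac{B_1(F_k/\F_3)+2B_2(F_k/\F_3)}{g_k} > \frac{2}{3}$.

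To obtain this splitting rate, I would identify the tower $\mathcal{F}/\F_3$ among the tame quadratic towers studied by Garcia and Stichtenoth in \cite{gast2} and invoke their explicit computations. Two classical ingredients are involved. First, one bounds the genus: the ramification of each step $F_{k+1}/F_k$ is concentrated over the zeros and poles of $\phi(x)=x(x-1)/(x+1)$, namely $x \in \{0,1,-1,\infty\}$, and because the extension is tame the Hurwitz genus formula applied recursively yields $g_k \leq c \cdot 2^k$ for an explicit constant $c$. Second, one tracks the splitting locus over $\F_9$: a place contributes to the completely split locus at level $k$ only if all its iterated images under $\phi$ remain squares in $\F_9$, and an inductive count shows that at least $c' \cdot 2^k$ such places exist, with $c'/c > \frac{2}{3}$.

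Once the strict inequality $\mu_1(\mathcal{G}/\F_9) > \frac{2}{3}$ is in hand, Theorem \ref{maindeux} applied with $r = 2$ yields $h(F_k/\F_3) \in \Omega\left(\left(\left(\frac{9}{8}\right)^{\alpha} \cdot 3\right)^{g_k}\right)$ for any $\alpha < \frac{1}{2}\mu_1(\mathcal{G}/\F_9)$, which covers the desired value $\alpha = \frac{1}{3}$ and gives the stated asymptotic.

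The main obstacle is the splitting-rate computation: one must verify combinatorially that sufficiently many places above the $\F_9$-splitting locus propagate through every level of the tower, and align this growth precisely against the recursive genus bound so that the ratio is strictly above $\frac{2}{3}$ rather than merely at $\frac{2}{3}$. This is exactly the type of analysis carried out by Garcia--Stichtenoth in \cite{gast2}, so once their splitting and ramification results are cited, the remainder of the argument reduces to a mechanical application of Theorem \ref{maindeux}, as in the previous proposition.
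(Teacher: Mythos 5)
Your proposal follows essentially the same route as the paper: the paper cites Example 4.3 of \cite{gast2} to obtain $\mu_1({\mathcal T}/\F_9)\geq 2/3$ for the degree-$2$ constant field extension and then applies Theorem \ref{maindeux} with $q=3$, $r=2$, exactly as you outline (you propose to re-derive the Garcia--Stichtenoth splitting and genus counts rather than merely cite them, but it is the same computation). The boundary issue you flag is genuine and is present in the paper's own proof as well: with only $\mu_1\geq 2/3$ available, Theorem \ref{maindeux} yields the bound for every $\alpha<1/3$ but not for $\alpha=1/3$ itself, so the stated exponent must be read as a supremum unless the strict inequality $\mu_1>2/3$ you ask for can actually be extracted from \cite{gast2}.
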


\begin{proof}
By Example 4.3 in \cite{gast2},  the equation $y^2=\frac{x(x-1)}{x+1}$ 
defines a tower ${\mathcal T}/\F_9=(T_0,T_1,...T_n,...)$ over $\F_9$ such that $\mu_1({\mathcal T}/\F_9)\geq 2/3$.
Then the tower ${\mathcal F}/\F_3$ is the tower such that ${\mathcal T}=\F_9\otimes_{\F_3}{\mathcal F} $ is the constant field extension tower of 
${\mathcal F}$ of degree $r=2$. Hence, the tower ${\mathcal F}/\F_3$ satisfies $\liminf_{k\rightarrow\infty} \sum_{i\mid r}\frac{iB_i(F_k/\F_{l})}{g_k}= 
\mu_1({\mathcal T}/\F_{9}) \geq \frac{2}{3}$ and we conclude by using Theorem \ref{maindeux}.
\end{proof}

Now, we also have the following result:

\begin{proposition}
Let us consider the tower  ${\mathcal F}/\F_3$ defined over $\F_3$ recursively by the equation $$y^2=\frac{x(1-x)}{x+1}.$$

Then the  tower ${\mathcal F}/\F_3=(F_0,F_1,...F_n,...)$ defined over $\F_3$  satisfies
$$h(F_k/\F_3) \in\Omega\left(\left(3{\left(\frac{81}{80}\right)^{\frac{1}{2}}}\right)^{g_k}\right).$$
\end{proposition}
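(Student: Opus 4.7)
The strategy is to reproduce, for this tower, exactly the scheme used in the proof of the previous proposition: identify the correct constant field extension over which the tower becomes asymptotically good in rational places, translate that information into the hypothesis of Theorem \ref{maindeux}, and then read off the exponent.

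First I would compare the target bound with the conclusion of Theorem \ref{maindeux}. Writing the claim as $3\bigl(\tfrac{81}{80}\bigr)^{1/2} = \bigl(\tfrac{q^r}{q^r-1}\bigr)^{\alpha}\,q$ with $q=3$, one sees immediately that the only consistent reading is $r=4$ (so that $q^r=81$) and $\alpha=\tfrac{1}{2}$. Thus the tower should be studied through its degree $4$ constant field extension, i.e.\ through ${\mathcal T}/\F_{81}={\mathcal F}\otimes_{\F_3}\F_{81}$, and Theorem \ref{maindeux} will be applied with $r=4$.

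Next I would cite the relevant example from \cite{gast2} (the companion equation to Example 4.3, in the same section as the previous tower, treated over $\F_{81}$) to conclude that the tower ${\mathcal T}/\F_{81}$ defined recursively by $y^2=\frac{x(1-x)}{x+1}$ is asymptotically good and satisfies $\mu_1({\mathcal T}/\F_{81})\geq 2$, i.e.
$$\liminf_{k\to\infty}\frac{B_1(T_k/\F_{81})}{g_k}\geq 2.$$
Because $T_k=F_k\otimes_{\F_3}\F_{81}$, the standard decomposition of places in a constant field extension gives $B_1(T_k/\F_{81})=\sum_{i\mid 4} i\,B_i(F_k/\F_3)$, hence
$$\liminf_{k\to\infty}\sum_{i\mid 4}\frac{i\,B_i(F_k/\F_3)}{g_k}= \mu_1({\mathcal T}/\F_{81})\geq 2.$$
This is exactly the hypothesis of Theorem \ref{maindeux} with $r=4$, the quantity $\frac{1}{r}\mu_1({\mathcal G}/\F_{q^r})$ in that theorem being $\geq\frac{2}{4}=\frac{1}{2}$.

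Finally I would invoke Theorem \ref{maindeux}: for every $\alpha$ with $0<\alpha<\tfrac{1}{2}$,
$$h(F_k/\F_3)\in\Omega\left(\left(\left(\tfrac{81}{80}\right)^{\alpha}\cdot 3\right)^{g_k}\right),$$
and letting $\alpha\nearrow \tfrac{1}{2}$ (which is admissible because the $\Omega$-statement with exponent $\alpha$ implies it for any smaller exponent, and the target $\alpha=\tfrac{1}{2}$ is the supremum) yields the asserted bound. The only potentially non-routine step is the first one, namely verifying from \cite{gast2} both that the equation $y^2=\frac{x(1-x)}{x+1}$ does define a tower over $\F_{81}$ (i.e.\ that $\F_{81}$ is algebraically closed in every $T_k$ and the degrees are $[T_{k+1}:T_k]=2$) and that its splitting rate there is at least $2$; everything afterwards is a mechanical application of Theorem \ref{maindeux}.
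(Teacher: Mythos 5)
Your proof follows essentially the same route as the paper: cite the relevant example of Garcia--Stichtenoth--R\"uck (the paper uses Example 4.5 together with Remark 4.6 of \cite{gast2}) to get $\mu_1({\mathcal T}/\F_{81})\geq 2$ for the tower over $\F_{81}$, identify ${\mathcal T}$ as the degree $4$ constant field extension of ${\mathcal F}/\F_3$ so that $\liminf_k \sum_{i\mid 4} iB_i(F_k/\F_3)/g_k = \mu_1({\mathcal T}/\F_{81})\geq 2$, and apply Theorem \ref{maindeux} with $r=4$.

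One caveat: your final step, ``letting $\alpha\nearrow\tfrac12$,'' is not a valid deduction. Theorem \ref{maindeux} gives $h(F_k/\F_3)\in\Omega\bigl(((81/80)^{\alpha}\cdot 3)^{g_k}\bigr)$ only for $\alpha$ \emph{strictly} less than $\tfrac14\mu_1\geq\tfrac12$, and a family of $\Omega$-bounds for all $\alpha<\tfrac12$ does not imply the $\Omega$-bound at $\alpha=\tfrac12$, since the implied constants may degenerate as $\alpha\to\tfrac12$ (the implication between $\Omega$-statements goes from larger base to smaller base, not the other way). This endpoint issue is, however, already present in the paper's own statement, whose proof simply invokes Theorem \ref{maindeux} without comment; so apart from your (incorrect) attempt to justify the endpoint, your argument is the paper's argument.
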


\begin{proof}
By Example 4.5 and Remark 4.6 in \cite{gast2},  the equation $y^2=\frac{x(1-x)}{x+1}$ 
defines a tower ${\mathcal T}/\F_{81}=(T_0,T_1,...T_n,...)$ over $\F_{81}$ such that $\mu_1({\mathcal T}/\F_{81})\geq 2$.
Then the tower ${\mathcal F}/\F_3$ is the tower such that ${\mathcal T}=\F_{81}\otimes_{\F_3}{\mathcal F} $ is the constant field extension tower of 
${\mathcal F}$ of degree $r=4$. Hence, the tower 
${\mathcal F}/\F_3$ satisfies $\liminf_{k\rightarrow\infty} 
\sum_{i\mid r}\frac{iB_i(F_k/\F_{l})}{g_k}= \mu_1({\mathcal T}/\F_{81}) \geq \frac{1}{2}$ 
and we conclude by using Theorem \ref{maindeux}.
\end{proof}

Then, we also have:

\begin{proposition}
Let us consider the tower  ${\mathcal F}/\F_5$ defined over $\F_5$ recursively by the equation $$y^2=\frac{x(x+2)}{x+1}.$$

Then the  tower ${\mathcal F}/\F_5=(F_0,F_1,...F_n,...)$ defined over $\F_3$  satisfies
$$h(F_k/\F_5) \in\Omega\left(\left(5{\left(\frac{25}{24}\right)^{\frac{1}{2}}}\right)^{g_k}\right).$$
\end{proposition}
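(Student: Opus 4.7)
The plan is to mirror exactly the proofs of the two immediately preceding propositions, reducing the statement to a plug-in of Theorem \ref{maindeux} via the constant field extension trick. First I would cite the relevant example in Garcia--Stichtenoth \cite{gast2} (in the same series of tame quadratic towers as their Examples 4.3 and 4.5 already used above) asserting that the equation $y^2=\frac{x(x+2)}{x+1}$, regarded over $\F_{25}$, defines a tower ${\mathcal T}/\F_{25}=(T_0,T_1,\ldots)$ with $\mu_1({\mathcal T}/\F_{25})\geq 1$. This is the one external input; accepting it, the argument is routine.

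Next I would observe that since all coefficients of the defining equation lie in $\F_5$, the tower ${\mathcal F}/\F_5$ defined by the same recursion has ${\mathcal T}=\F_{25}\otimes_{\F_5}{\mathcal F}$ as its degree $r=2$ constant field extension. Standard decomposition of places under a constant field extension of degree $2$ gives
$$B_1(T_k/\F_{25}) \;=\; B_1(F_k/\F_5) + 2\,B_2(F_k/\F_5),$$
so that
$$\liminf_{k\to\infty}\sum_{i\mid 2}\frac{i\,B_i(F_k/\F_5)}{g_k}
\;=\;\liminf_{k\to\infty}\frac{B_1(T_k/\F_{25})}{g_k}
\;=\;\mu_1({\mathcal T}/\F_{25})\;\geq\;1.$$
The hypothesis $B_1(F_k/\F_5)\geq 1$ of Theorem \ref{maindeux} holds trivially from the existence of a rational place below $x_0=0$ (or any rational specialization that propagates up the tower).

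Finally, I would apply Theorem \ref{maindeux} with $q=5$, $r=2$ and $\mu_1({\mathcal G}/\F_{q^r})\geq 1$: for every $\alpha$ with $0<\alpha<\frac{1}{r}\mu_1({\mathcal G}/\F_{q^r})$, i.e.\ for every $\alpha<\tfrac{1}{2}$,
$$h(F_k/\F_5)\;\in\;\Omega\!\left(\left(\left(\tfrac{25}{24}\right)^{\alpha}\cdot 5\right)^{g_k}\right),$$
which, letting $\alpha\nearrow \tfrac{1}{2}$, yields the asserted bound in the same (slightly loose) sense as the preceding propositions.

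The only real obstacle is the verification of $\mu_1({\mathcal T}/\F_{25})\geq 1$ from \cite{gast2}; the rest is a template application of Theorem \ref{maindeux} identical to that used in the $\F_3$ examples above. I do not anticipate any technical surprises beyond locating the exact example in \cite{gast2}.
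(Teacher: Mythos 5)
Your proposal is correct and follows essentially the same route as the paper: cite Example 4.8 (and Remark 4.6) of \cite{gast2} for $\mu_1({\mathcal T}/\F_{25})\geq 1$, identify ${\mathcal T}=\F_{25}\otimes_{\F_5}{\mathcal F}$ as the degree $r=2$ constant field extension, and conclude by Theorem \ref{maindeux} with $\alpha<\frac{1}{2}$. Your write-up is in fact slightly cleaner than the paper's (which conflates $\mu_1\geq 1$ with $\frac{1}{r}\mu_1\geq\frac{1}{2}$ and carries a typographical slip about $\F_{81}$), and your explicit check of $B_1(F_k/\F_5)\geq 1$ is a sensible addition the paper omits.
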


\begin{proof}
By Example 4.8 and Remark 4.6 in \cite{gast2},  the equation $y^2=\frac{x(x+2)}{x+1}$ 
defines a tower ${\mathcal T}/\F_{25}=(T_0,T_1,...T_n,...)$ over $\F_{81}$ such that $\mu_1({\mathcal T}/\F_{25})\geq 1$.
Then the tower ${\mathcal F}/\F_5$ is the tower such that ${\mathcal T}=\F_{25}\otimes_{\F_5}{\mathcal F} $ is the constant field extension tower of 
${\mathcal F}$ of degree $r=2$. Hence, the tower 
${\mathcal F}/\F_5$ satisfies $\liminf_{k\rightarrow\infty} \sum_{i\mid r}\frac{iB_i(F_k/\F_{l})}{g_k}=
 \mu_1({\mathcal T}/\F_{25}) \geq \frac{1}{2}$ and we conclude by using Theorem \ref{maindeux}.
\end{proof}

\end{document}